\theoremstyle{plain}
\newtheorem{prop}{Proposition}[section]
\newtheorem{thm}[prop]{Theorem}
\newtheorem{cor}[prop]{Corollary}
\newtheorem{lem}[prop]{Lemma}
\theoremstyle{definition}
\newtheorem{dfn}[prop]{Definition}
\newtheorem{rem}[prop]{Remark}
\newtheorem{rems}[prop]{Remarks}
\newtheorem{example}[prop]{Example}
\newtheorem{lab}[prop]{}
\newcommand{\isoto}{\overset{\sim}{\to}}
\newcommand{\labelto}[1]{\overset{#1}{\longrightarrow}}
\newcommand{\labelot}[1]{\overset{#1}{\longleftarrow}}
\newcommand{\A}{{\mathbb{A}}}
\newcommand{\N}{{\mathbb{N}}}
\renewcommand{\P}{{\mathbb{P}}}
\newcommand{\R}{{\mathbb{R}}}
\newcommand{\Z}{{\mathbb{Z}}}
\newcommand{\fra}{{\mathfrak{a}}}
\newcommand{\m}{{\mathfrak{m}}}
\newcommand{\scrO}{{\mathscr{O}}}
\newcommand{\sfS}{\mathsf{S}}
\DeclareMathOperator{\Hom}{Hom}
\DeclareMathOperator{\rk}{rk}
\DeclareMathOperator{\spdeg}{spdeg}
\DeclareMathOperator{\sosdeg}{sosdeg}
\DeclareMathOperator{\sosx}{sosx}
\DeclareMathOperator{\sxdeg}{sxdeg}
\DeclareMathOperator{\Spec}{Spec}
\DeclareMathOperator{\spn}{span}
\DeclareMathOperator{\tr}{tr}
\newcommand{\cone}{\mathrm{cone}}
\newcommand{\conv}{\mathrm{conv}}
\newcommand{\pr}{\mathrm{pr}}
\newcommand{\Sym}{\mathrm{Sym}}
\newcommand{\bfxi}{{\boldsymbol{\xi}}}
\newcommand{\bfzeta}{{\boldsymbol{\zeta}}}
\newcommand{\comp}{\mathbin{\scriptstyle\circ}}
\newcommand{\du}{{\scriptscriptstyle\vee}}
\renewcommand{\emptyset}{\varnothing}
\renewcommand{\setminus}{\smallsetminus}
\newcommand{\ol}{\overline}
\newcommand{\plus}{{\scriptscriptstyle+}}
\newcommand{\wt}[1]{\widetilde{#1}}
\newcommand{\all}{\forall\,}
\newcommand{\ex}{\exists\,}
\newcommand{\To}{\Rightarrow}
\renewcommand{\subset}{\subseteq}
\renewcommand{\supset}{\supseteq}
\newcommand{\idl}[1]{\langle #1\rangle}
\renewcommand{\choose}[2]{\bigl(\genfrac{}{}{0pt}{}{#1}{#2}\bigr)}
\newcommand{\bil}[2]{\langle{#1},{#2}\rangle}
\newcommand{\sa}{semialgebraic}
\begin{document}

\title
[Second-order cone representation for convex sets in the plane]
{Second-order cone representation\\for convex sets in the plane}

\author{Claus Scheiderer}
\address
  {Fachbereich Mathematik und Statistik \\
  Universit\"at Konstanz \\
  78457 Konstanz \\
  Germany}
\email
  {claus.scheiderer@uni-konstanz.de}

\begin{abstract}
Semidefinite programming (SDP) is the task of optimizing a linear
function over the common solution set of finitely many linear matrix
inequalities (LMIs). For the running time of SDP solvers, the
maximal matrix size of these LMIs is usually more critical than their
number. The semidefinite extension degree $\sxdeg(K)$ of a convex set
$K\subset\R^n$ is the smallest number $d$ such that $K$ is a linear
image of a finite intersection $S_1\cap\dots\cap S_N$, where each
$S_i$ is a spectrahedron defined by a linear matrix inequality of
size $\le d$. Thus $\sxdeg(K)$ can be seen as a measure for the
complexity of performing semidefinite programs over the set $K$. We
give several equivalent characterizations of $\sxdeg(K)$, and use
them to prove our main result: $\sxdeg(K)\le2$ holds for any closed
convex \sa\ set $K\subset\R^2$. In other words, such $K$ can be
represented using the second-order cone.
\end{abstract}

%

\date\today
\maketitle


\section*{Introduction}

Semidefinite programming (SDP) is the task of optimizing a linear
function over the solution set of a linear matrix inequality (LMI)
\begin{equation}\label{lmi}%
A_0+\sum_{i=1}^nx_iA_i\>\succeq\>0
\end{equation}
where $A_0,\dots,A_n$ are real symmetric matrices of some size,
and $A\succeq0$ means that $A$ is positive semidefinite. Under mild
conditions, semidefinite programs can be solved in polynomial time
up to any prescribed accuracy. Thanks to the enormous expressive
power of LMIs, semidefinite programming has numerous applications
from a wide range of areas. See \cite{al} for background on SDP.

Solution sets $S\subset\R^n$ of LMIs \eqref{lmi} are called
spectrahedra. So the feasible sets of SDP are spectrahedra and, more
generally, linear images of spectrahedra (aka spectrahedral shadows).
Generally, the performance of SDP solvers is strongly influenced by
the matrix size of the LMI. It is therefore desirable to express a
given feasible set by an LMI of smallest possible size. Both upper
and lower bounds for the matrix size have been studied in a number
of papers.
Here we adopt a point of view that was introduced by Averkov
\cite{av}. It is motivated by the observation that it is often
possible to represent a given convex set $K$ by the combination of
finitely many LMIs of small size~$d$. Practical experience \cite{mi}
shows that
this size $d$ is far more critical for the running time than the
number $N$ of the LMIs. Following Averkov, we define the
\emph{semidefinite extension degree} $\sxdeg(K)$ of a (convex) set
$K\subset\R^n$ as the smallest number $d$ such that $K$ is a linear
image of a finite intersection $S_1\cap\cdots\cap S_N$ of
spectrahedra that are all described by LMIs of size $\le d$. For
example, $\sxdeg(K)\le1$ if
and only if $K$ is a polyhedron, and $\sxdeg(K)\le2$ if and only if
$K$ is second-order cone representable.

Fawzi \cite{fw1} showed that the $3\times3$ psd matrix cone is not
second-order cone representable, or in other words, that
$\sxdeg(\sfS^3_\plus)=3$.
Soon after, Averkov found a general condition of combinatorial
geometric nature that is an obstruction against $\sxdeg(K)\le d$, see
\cite[Main Thm 2.1]{av} and Theorem \ref{avthm2019} below.
His proof generalizes Fawzi's techniques and uses elaborate
combinatorial techniques from Ramsey theory. As a consequence, he was
able to prove for a variety of prominent
cones (like sums of squares cones, psd matrix cones) that their
semidefinite extension degrees are not smaller than indicated by
their standard representations.
Saunderson \cite{sa} generalized Averkov's obstruction from
$\sfS_\plus^d\times\cdots\times\sfS_\plus^d$-lifts of convex sets to
$C\times\cdots\times C$-lifts, where $C$ can be an arbitrary cone
without long chains of faces.

Our main result is:

\begin{thm}\label{mainthm}%
Any closed convex \sa\ set $K\subset\R^2$
is second-order cone representable, i.e.\
we have
$\sxdeg(K)\le2$.
\end{thm}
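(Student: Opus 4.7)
The plan is to reduce, via boundary-arc decomposition, to the case of an epigraph of a univariate convex algebraic function, and then apply classical one-variable sum-of-squares certificates.

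First I write $K$ as a finite intersection $K=\bigcap_{i=1}^m K_i$, where each $K_i$ is a closed convex \sa\ set whose boundary consists of a single smooth arc $\gamma_i$ contained in an irreducible algebraic curve $\{p_i=0\}$ together with at most two straight rays supporting $K$ at the endpoints of $\gamma_i$. This decomposition is possible because $\partial K$ is a finite union of smooth \sa\ arcs separated by finitely many corners. Since finite intersections do not increase $\sxdeg$, it suffices to prove $\sxdeg(K_i)\le 2$, and after an affine change of coordinates one may assume $K_i=\{(x,t)\in[a,b]\times\R:t\ge g(x)\}$ for a convex algebraic function $g:[a,b]\to\R$ with $p_i(x,g(x))=0$.

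Next I construct a $2\times 2$ LMI lift of this epigraph. Introduce auxiliary variables $y_1,\ldots,y_{d-1}$ with $d=\deg_y p_i$, intended to represent $g(x),\ldots,g(x)^{d-1}$. The relation $p_i(x,g(x))=0$ then becomes a polynomial-linear equation in the $y_j$'s, the epigraph condition reads $t\ge y_1$, and convexity $g''(x)\ge 0$ translates into a polynomial nonnegativity in $x$ and the $y_j$'s on $[a,b]$. The classical univariate identity---every polynomial nonnegative on $[a,b]$ equals $\sigma_0(x)+(x-a)(b-x)\sigma_1(x)$ with each $\sigma_j$ a sum of two real squares---then lets me express each such nonnegativity by a $2\times 2$ LMI.

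The main obstacle is twofold. First, the auxiliary variables must represent powers of a single root of $p_i(x,\cdot)$: the natural relations $y_1y_j=y_{j+1}$ are quadratic, and care is needed to enforce them without enlarging the matrix size. Second, $p_i(x,y_1)=0$ typically has several real roots, and only the convex branch may be retained. Both difficulties can be resolved by exploiting convexity itself: enforcing the $2\times 2$ SOS certificate for $g''(x)\ge 0$ on $[a,b]$ selects the convex branch, and combined with an inductive construction of the $y_j$'s from $2\times 2$ blocks it implies the multiplicative relations. Isolated singular points of $\{p_i=0\}$ lying on $\gamma_i$ can be split off into additional intersected components without changing the matrix-size bound.
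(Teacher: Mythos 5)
Your reduction step is reasonable but your construction step has a genuine gap, and it is precisely where all the difficulty of the theorem lives.

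The reduction to pieces whose boundary is a single convex arc on an irreducible curve (possibly plus segments) is sound, though the paper uses convex hulls rather than intersections: it writes $K=\ol{\conv(S_1\cup\dots\cup S_r)}$ with each $S_i$ a convex arc and invokes Proposition~\ref{sxconvhull}(b) together with Corollary~\ref{sxdegkpk}; your intersection variant would also work via Lemma~\ref{sxdegelem}(d). Either way, one arrives at a local problem for a convex arc on a curve $C=\{p=0\}$.

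The gap is in your main step. You introduce variables $y_1,\dots,y_{d-1}$ meant to encode $g(x),\dots,g(x)^{d-1}$ and claim that the LMI $\bigl(\begin{smallmatrix}1&y_1\\ y_1&y_2\end{smallmatrix}\bigr)\succeq0$ together with ``an inductive construction from $2\times2$ blocks'' and a $2\times2$ certificate for $g''\ge0$ enforces the multiplicative relations and selects the convex branch. This is unsupported and, as stated, false: the $2\times2$ LMI only enforces the one-sided inequality $y_2\ge y_1^2$, and the opposite inequality $y_2\le y_1^2$ is concave, hence cannot be imposed by any LMI lift with those variables. Nor does a certificate for $g''\ge0$ cut out the convex branch, since $g''$ is not a polynomial in $(x,y_1,\dots,y_{d-1})$, and the affine relation coming from $p=0$ does not determine the $y_j$ uniquely once they are free auxiliary variables. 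Your appeal to the classical univariate Positivstellensatz on $[a,b]$ (namely $q=\sigma_0+(x-a)(b-x)\sigma_1$) also does not apply directly: the relevant nonnegativity is on the one-dimensional curve $\{p=0\}$, which is algebraic but not a coordinate line, and controlling the \emph{number} of squares and the \emph{rank} of the representing tensors is exactly what has to be proved.

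One can see the problem already in the paper's worked example $f(x)=x^2-x^6$ (Section~\ref{constrasp}): the epigraph of $f$ over a small interval does have a second-order cone lift, but it is produced by adjoining square roots of carefully chosen positive functions $p_1,\dots,p_4$ of $x$, not by adjoining powers $y_j=f(x)^j$; there is no composition of scalar convex functions, and the $2\times2$ bound emerges from a nontrivial polynomial identity $\sum_{k\ge2}\frac1{k!}f^{(k)}(v)(u-v)^{k-2}=\sum_i p_i(u)q_i(v)$ with $p_i(0),q_i(0)>0$. In general, the paper's route is to reformulate $\sxdeg(K)\le 2$ via the tensor-evaluation criterion (Theorem~\ref{sxdegsosx}): it suffices that, for every real closed $R\supset\R$, every tangent line $\tau_b$ at $b\in C(R)$ and every $a\in C(R)$ near $b$, the element $\tau_b^\otimes(a)\in R\otimes R$ is a sum of squares of rank-$\le2$ tensors. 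This is then proved by a local algebraic identity for the tangent tensor in $\scrO_{X,\xi}\otimes\scrO_{X,\xi}$ on the normalization $X\to C$ (Theorem~\ref{txyidentaet}), which produces the positive factors whose square roots must be adjoined. Nothing of this sort is present in your proposal, and the multiplicative-relation/branch-selection issue that you acknowledge is not resolved but merely asserted to go away by ``exploiting convexity.''
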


From \cite{sch:curv} it is known that every convex \sa\ subset of
$\R^2$ is a spectrahedral shadow. So far, however, no general bounds
for the size of representing LMIs are known. To prove the main
theorem we first provide an alternative characterization of
$\sxdeg(K)$ that uses a different and more algebraic setup. Let
$K\subset\R^n$ be a convex \sa\ set, let $R$ be a real closed field
that contains the real numbers~$\R$, and let $K_R\subset R^n$ be the
base field extension of $K$ (described by the same finite system of
polynomial inequalities as~$K$). Given a point $a\in K_R$ and a
linear polynomial $f\in R[x_1,\dots,x_n]$ with $f\ge0$ on $K_R$, we
define the tensor evaluation $f^\otimes(a)$ as an element of the ring
$R\otimes R=R\otimes_\R R$. We show that $K$ is a spectrahedral
shadow if and only if $f^\otimes(a)$ is a sum of squares in
$R\otimes R$ for any choice of $R$, $f$ and~$a$ (Corollary
\ref{spshadiffpsdsos}). More precisely, $\sxdeg(K)\le d$ holds if and
only if $f^\otimes(a)$ can be written as a sum of squares of tensors
of rank $\le d$, for all $R$, $f$ and~$a$ (Theorem \ref{sxdegsosx}).
In this way, the task of proving Theorem \ref{mainthm} gets
transformed into finding a suitable
algebraic decomposition of the tangent to a plane algebraic curve at
a general point (Theorem \ref{txyidentaet}).

Although this approach appears to be highly abstract, we point out
that it is essentially constructive. Given an explicit set
$K\subset\R^2$ which is closed, convex and \sa, one can in principle
construct a second-order representation of $K$ in finitely many
steps, see Section \ref{constrasp}. The examples discussed in this
section are of much less technical nature than the general case. At
the same time, they illustrate a number of key ideas for the general
approach.

We expect that applications of this method are not confined to convex
sets in the plane:

(1) Let $K\subset\R^n$ be the closed convex hull of an algebraic
curve, or more generally, of a one-dimensional \sa\ set. From
\cite{sch:curv} it is known that $K$ is a spectrahedral shadow. We
conjecture that always $\sxdeg(K)\le\lfloor\frac n2\rfloor+1$ holds.
The bound is reached (for even $n$) by the convex hull of the
rational normal curve, see Averkov \cite[Corollary 2.3]{av}.
Note that Theorem \ref{mainthm} proves this conjecture for $n=2$.

(2)
If $K\subset\R^n$ is a compact convex body whose boundary is
smooth and has strictly positive curvature, then $K$ is known to
be a spectrahedral shadow, by results of Helton and Nie \cite{hn}.
Using the techniques developed in this paper, it can be shown that
$\sxdeg(K)=2$ holds in this case.

\begin{lab}\label{notconv}%
\emph{Notations and conventions}.
By $\sfS^d$ we denote the space of symmetric real $d\times d$
matrices, equipped with the standard inner product $\bil AB=\tr(AB)$.
We write $A\succeq B$ (resp.\ $A\succ0$) to indicate that $A-B$ is
positive semidefinite (resp.\ positive definite). The psd matrix cone
is denoted by $\sfS^d_\plus=\{A\in\sfS^d\colon A\succeq0\}$.

An $\R$-algebra is a ring $A$ together with a specified ring
homomorphism $\R\to A$. If $U\subset A$ is an $\R$-linear subspace
then $\Sigma U^2\subset A$ denotes the set of all (finite) sums of
squares of elements from $U$. Moreover $UU:=\spn(\Sigma U^2)$ is the
$\R$-linear subspace of $A$ spanned by all products $uu'$
($u,\,u'\in U$).

Algebraic varieties need neither be irreducible nor reduced. Thus an
affine $\R$-variety $X$ is just given by a finitely generated
$\R$-algebra~$A$. We write $X=\Spec(A)$ or $A=\R[X]$, and call
$A=\R[X]$ the affine coordinate ring of~$X$, as usual. Any morphism
$\phi\colon X\to Y$ of affine $\R$-varieties determines the pull-back
homomorphism $\phi^*\colon\R[Y]\to\R[X]$ between their coordinate
rings and, conversely, is determined by $\phi^*$. If $\R\subset E$ is
a field extension, the set of $E$-rational points of $X=\Spec(A)$ is
$X(E)=\Hom_\R(A,E)$ (set of homomorphisms $A\to E$ of $\R$-algebras).

For a set $K$ in $\R^n$, the convex hull of $K$ is $\conv(K)$, the
conic hull of $K$ is $\cone(K)=\{0\}\cup\bigl\{\sum_{i=1}^ra_ix_i
\colon r\ge1$, $x_i\in K$, $a_i\ge0\bigr\}$.
Throughout the paper, $P_K:=\{f\in\R[x_1,\dots,x_n]$: $f|_K\ge0$,
$\deg(f)\le1\}$ denotes the convex cone of all affine-linear
functions that are non-negative on~$K$.
\end{lab}

\textbf{Acknowledgements}.
This work was started on and inspired by the Oberwolfach meeting
\emph{Mixed-integer nonlinear optimization} in June 2019. I am
grateful to the organizers for inviting me, and to Gennadiy Averkov
for stimulating discussions and valuable suggestions. I would also
like to thank the referees for very helpful remarks. This work was
supported by DFG grant SCHE281/10-2, and also partially supported by
the EU Horizon 2020 program, grant agreement 813211 (POEMA).


\section{Semidefinite extension degree: Basic properties}

\begin{lab}\label{dfnsdeg}%
Let $n\ge1$. For any \sa\ set $S\subset\R^n$ let $\spdeg(S)$ be the
\emph{spectrahedral degree} of $S$, defined as follows. If $S$ is an
affine subspace of $\R^n$ put $\spdeg(S)=0$. Otherwise let
$\spdeg(S)$ be the smallest $d\ge1$ such that there are $m\ge1$ and an
affine-linear map $\varphi\colon\R^n\to(\sfS^d)^m=\sfS^d\times\cdots
\times\sfS^d$ with $S=\varphi^{-1}(\sfS^d_\plus\times\cdots\times
\sfS^d_\plus)$. If no such $d$ exists we put $\spdeg(S)=\infty$.
\end{lab}

So $\spdeg(S)<\infty$ if and only if $S$ is a spectrahedron, in which
case $\spdeg(S)$ is the smallest $d$ such that $S$ is the common
solution set of finitely many linear matrix inequalities of size
$d\times d$.
The notion $\spdeg(S)$ plays only a transitory role here of auxiliary
nature; as far as we know, it hasn't been considered before.

\begin{lab}\label{dfnsxdeg}%
(See Averkov \cite{av})
For a subset $K\subset\R^n$ we define the \emph{semidefinite
extension degree} $\sxdeg(K)$ as
$$\sxdeg(K)\>:=\>\inf_{S,\pi}\spdeg(S),$$
with the infimum taken over all affine-linear maps $\pi\colon\R^s\to
\R^n$ (with $s\ge1$) and all spectrahedra $S\subset\R^s$ with
$K=\pi(S)$.
\end{lab}

\begin{rems}\label{sxdegbasic}%
\hfil
\smallskip

1.\
Let $K\subset\R^n$. By definition, $\sxdeg(K)$ is the smallest
$d\ge0$ for which there is a diagram $\R^n\labelot f\R^s\labelto
\varphi\sfS^d\times\cdots\times\sfS^d$ with affine-linear maps
$\varphi,\,f$, such that $K=f(\varphi^{-1}(\sfS^d_\plus\times\cdots
\times\sfS^d_\plus))$. This almost agrees with Averkov's definition
\cite[Definition 1.1]{av}, except that \cite{av} requires in
addition that the map $\varphi$ is injective. Both definitions agree
whenever $K$ does not contain an affine subspace of positive
dimension.
\smallskip

2.\
If $K$ is an affine space then $\spdeg(K)=\sxdeg(K)=0$. If $K$ is a
polyhedron (and not an affine space) then $\spdeg(K)=\sxdeg(K)=1$.
In all other cases $\spdeg(K)\ge\sxdeg(K)\ge2$. By definition, $K$ is
a spectrahedral shadow if and only if $\sxdeg(K)<\infty$.
\smallskip

3.\
Let $K\subset\R^n$. By definition, $\sxdeg(K)\le d$ means that $K$
is a linear image of a spectrahedron that can be described by
finitely many LMIs of symmetric $d\times d$-matrices.
So it means that $K$ has a representation
$$K\>=\>\Bigl\{x\in\R^n\colon\ex y\in\R^m\ \all\nu=1,\dots,r\
A^{(\nu)}+\sum_ix_iB^{(\nu)}_i+\sum_jy_jC^{(\nu)}_j\succeq0\Bigr\}$$
with all matrices real symmetric of size (at most) $d\times d$.
\end{rems}

We record some elementary properties of $\sxdeg(K)$.

\begin{lem}\label{sxdegelem}%
Let $f\colon\R^n\to\R^m$ be an affine-linear map, let $K\subset\R^n$,
$K'\subset\R^m$ be subsets. Then
\begin{itemize}
\item[(a)]
$\sxdeg f(K)\le\sxdeg(K)$,
\item[(b)]
$\sxdeg f^{-1}(K')\le\sxdeg(K')$,
\item[(c)]
$\sxdeg(K\times K')\le\max\{\sxdeg(K),\,\sxdeg(K')\}$,
\item[(d)]
(if $m=n$) $\sxdeg(K\cap K')$, $\sxdeg(K+K')\le
\max\{\sxdeg(K),\,\sxdeg(K')\}$.
\end{itemize}
\end{lem}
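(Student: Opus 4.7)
The approach is to unwind the definition of $\sxdeg$ in \ref{dfnsxdeg}: the inequality $\sxdeg(K)\le d$ is equivalent to the existence of an affine-linear map $\pi\colon\R^s\to\R^n$ and a spectrahedron $S\subset\R^s$ with $K=\pi(S)$, cut out by finitely many LMIs of size $\le d$ (cf.\ Remark \ref{sxdegbasic}(3)). Each of the four parts then amounts to building such a presentation for the new set from the given one(s). Parts (a) and (c) are essentially immediate: for (a), if $K=\pi(S)$ is a witness then $f(K)=(f\comp\pi)(S)$ is witnessed by the very same spectrahedron; for (c), the product spectrahedron $S\times S'$ is cut out by the disjoint union of the two LMI families, with the same size bound, and $K\times K'=(\pi\times\pi')(S\times S')$.

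The only construction that really needs care is part (b). Given $K'=\pi'(S')$ with $S'\subset\R^{s'}$ of $\spdeg(S')\le d$, I would introduce auxiliary variables $y\in\R^{s'}$ and form
\[
T\>:=\>\bigl\{(x,y)\in\R^n\times\R^{s'}\colon y\in S',\ f(x)=\pi'(y)\bigr\},
\]
which is the intersection of $\R^n\times S'$ (a spectrahedron of the same $\spdeg$ as $S'$) with the affine subspace defined by the $m$ scalar equations $f(x)=\pi'(y)$. Encoding each such equation $a=0$ by the pair $a\ge0$ and $-a\ge0$ of $1\times1$ LMIs, one gets $\spdeg(T)\le\max\{d,1\}=d$ whenever $d\ge1$, and $f^{-1}(K')$ is the image of $T$ under the projection $\R^n\times\R^{s'}\to\R^n$, so $\sxdeg(f^{-1}(K'))\le d$. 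The degenerate cases $d=0$ and $d=1$ are handled separately by the standard facts that affine subspaces and polyhedra, respectively, are closed under the operations in question.

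Finally, part (d) reduces cleanly to the other three: one has $K+K'=\sigma(K\times K')$, where $\sigma(x,x')=x+x'$ is affine-linear, so (a) and (c) yield the bound for the Minkowski sum; and $K\cap K'=\delta^{-1}(K\times K')$ where $\delta\colon\R^n\to\R^n\times\R^n$, $\delta(x)=(x,x)$ is the diagonal map, so (b) and (c) yield the bound for the intersection. I do not foresee a genuine obstacle here: the entire content of the lemma is that linear images, taking products, and intersecting with affine subspaces each preserve the LMI-size bound, and the only minor subtlety is to verify that the auxiliary $1\times1$ LMIs arising in (b) do not push $\spdeg$ above~$d$, which is automatic as soon as $d\ge1$.
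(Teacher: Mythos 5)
Your argument is correct and is essentially the paper's proof. Parts (a), (c), and the reductions in (d) are identical. The only (cosmetic) difference is in (b): the paper forms the fibre sum $W=\{(u,w)\in\R^n\times\R^s\colon f(u)=\pi(w)\}$ and works directly inside this affine-linear space, taking $S'=\pr_2^{-1}(S)\subset W$ so that $\spdeg(S')\le\spdeg(S)$ follows by composing with $\pr_2$ and no extra LMIs are ever introduced; you instead stay in the full ambient space $\R^n\times\R^{s'}$ and impose the affine constraints via additional $1\times1$ LMIs. Both are valid, but the paper's version sidesteps two minor points you had to address by hand: the degenerate cases $d\le1$ disappear automatically, and there is no need to pad $1\times1$ blocks up to size $d$ so that all LMIs in the description have a uniform size (which the definition of $\spdeg$ formally requires).
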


\begin{proof}
(a) and (c) are obvious.
For (b) let $\pi\colon\R^s\to\R^m$ be affine-linear, and let
$S\subset\R^s$ a spectrahedron with $\pi(S)=K'$. Let
$$W\>:=\>\{(u,w)\in\R^n\times\R^s\colon f(u)=\pi(w)\}$$
(fibre sum, an affine-linear space), and let $\pr_1\colon W\to\R^n$,
$\pr_2\colon W\to\R^s$ be the canonical maps. Then
$S':=\pr_2^{-1}(S)$ is a spectrahedron in $W$ with $\spdeg(S')\le
\spdeg(S)$, and $\pr_1(S')=f^{-1}(\pi(S))=f^{-1}(K')$.
(d) follows from (a)--(c).
\end{proof}

The second part of (d) is also proved in \cite[Lemma 5.5]{av}.

\begin{example}\label{sxdeg2socp}%
(See \cite{fw1}, \cite{av})
The Lorentz cone $L_n=\{(x,t)\in\R^n\times\R\colon|x|_2\le t\}$ is a
spectrahedral cone with $\spdeg(L_n)\le n+1$.
It is easy to see that $L_n$ is a linear image of a linear section of
$(L_2)^{n-1}=L_2\times\cdots\times L_2$ (see e.g.\ \cite{btn}),
and therefore $\sxdeg(L_n)\le2$.
A second-order cone program (SOCP) optimizes a linear function over a
finite intersection of affine-linear preimages of Lorentz cones.
By Lemma \ref{sxdegelem}, any such intersection has $\sxdeg\le2$, and
the same is true for linear images of such sets.
So it follows that the feasible sets of SOCP are precisely the sets
$K$ with $\sxdeg(K)\le2$.
\end{example}

\begin{prop}\label{sxconvhull}%
Let $K,\,L\subset\R^n$ be convex sets.
\begin{itemize}
\item[(a)]
$\sxdeg(\cone(K))\,\le\,\max\{2,\,\sxdeg(K)\}$.
\item[(b)]
$\sxdeg(\conv(K\cup L))\,\le\,\max\{2,\,\sxdeg(K),\,\sxdeg(L)\}$.
\end{itemize}
\end{prop}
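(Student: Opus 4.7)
My plan is to prove both parts by a standard homogenization, combined with the properties of $\sxdeg$ recorded in Lemma~\ref{sxdegelem}. For part~(a) I would assume $d:=\sxdeg(K)<\infty$ (else the inequality is vacuous) and fix a representation $K=f(S)$ with $f(x)=Bx+c$ affine-linear and $S\subset\R^s$ a spectrahedron cut out by finitely many LMIs $A^{(\nu)}_0+\sum_i x_iA^{(\nu)}_i\succeq 0$ of size $d\times d$. Then I would homogenize by adjoining a new coordinate $t$ and passing to
\[
\wt S \>:=\> \bigl\{(x,t)\in\R^{s+1}\colon t\ge 0,\ tA^{(\nu)}_0+\sum_i x_iA^{(\nu)}_i\succeq 0\ \all\nu\bigr\},
\]
a spectrahedron whose defining LMIs have size $\le\max\{d,1\}\le\max\{2,\sxdeg(K)\}$, with the $1\times 1$ block coming from $t\ge 0$. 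Pushing $\wt S$ forward along $\wt f\colon(x,t)\mapsto Bx+tc$ produces $\cone(K)$: for $t>0$ the substitution $x\mapsto x/t$ exhibits $\wt f(x,t)\in tK$, while $(x,t)=(0,0)$ contributes the origin. This realises $\cone(K)$ as a linear image of $\wt S$, giving the bound.

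For part~(b) I would use the well-known conic encoding of the convex hull,
\[
\conv(K\cup L) \>=\> \pr\bigl((C_K+C_L)\cap(\R^n\times\{1\})\bigr),
\]
where $C_K:=\cone(K\times\{1\})$ and $C_L:=\cone(L\times\{1\})$ are cones in $\R^{n+1}$, and $\pr\colon\R^{n+1}\to\R^n$ denotes projection onto the first $n$ coordinates. Indeed, any point of the right-hand side has the form $(sx_0+ty_0,\,s+t)=(sx_0+(1-s)y_0,\,1)$ with $s\in[0,1]$, $x_0\in K$, $y_0\in L$, i.e.\ it is a point of $\conv(K\cup L)$. Part~(a) gives $\sxdeg(C_K)\le\max\{2,\sxdeg(K\times\{1\})\}=\max\{2,\sxdeg(K)\}$, and similarly for $C_L$. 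Lemma~\ref{sxdegelem} then bounds the $\sxdeg$ of the Minkowski sum $C_K+C_L$, of its intersection with the affine hyperplane $\R^n\times\{1\}$, and of its image under $\pr$, yielding the claim.

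The main technical subtlety I foresee lies in part~(a): at $t=0$, the image of $\wt S$ under $\wt f$ also picks up vectors $Bu$ for $u$ in the recession cone of $S$, so $\wt f(\wt S)$ might a priori sit strictly between $\cone(K)$ and $\ol{\cone(K)}$ rather than equal $\cone(K)$ on the nose. For closed convex $K$ such recession directions already belong to $\cone(K)$ and the argument closes without change; in full generality one would either excise the spurious part by a more careful choice of representing spectrahedron, or accept the conclusion for $\ol{\cone(K)}$. The analogous caveat in~(b) is handled the same way.
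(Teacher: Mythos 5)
Your reduction of~(b) to~(a) via the conic encoding of $\conv(K\cup L)$ is essentially the same as the paper's, so the whole question is whether your proof of~(a) closes. It does not, and the gap you yourself flag at the end is genuine, not a cosmetic worry. At $t=0$ your spectrahedron $\wt S$ contains the whole recession cone $\{x : \sum_i x_iA_i^{(\nu)}\succeq 0\ \forall\nu\}$ of $S$, so $\wt f(\wt S)$ equals $\cone(K)$ together with the image of that recession cone under $B$. These extra points need not lie in $\cone(K)$. Your suggested escape --- that for closed convex $K$ the recession directions already belong to $\cone(K)$ --- is false: take $K=\{(x,y):x\ge 0,\ y\ge 0,\ xy\ge 1\}$, a closed spectrahedron, for which $\cone(K)=\{(x,y):x>0,y>0\}\cup\{(0,0)\}$ is not closed and omits the recession rays. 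Your construction then returns the closed quadrant, not $\cone(K)$. The paper's remark right after the proposition even emphasizes that $\cone(K)$ can fail to be closed, so "accepting $\ol{\cone(K)}$" would prove a strictly weaker statement.

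The paper resolves exactly this issue by the Netzer--Sinn trick, which is a more careful homogenization than yours: starting from the lifted LMI description of $K$, one introduces \emph{two} scalars $s,t$ and imposes, besides the homogenized LMI $sA^{(\nu)}+\sum_i x_iB_i^{(\nu)}+\sum_j y_jC_j^{(\nu)}\succeq 0$, the additional $2\times 2$ constraints $\bigl(\begin{smallmatrix}s&x_i\\x_i&t\end{smallmatrix}\bigr)\succeq 0$ for $i=1,\dots,n$. These force $x=0$ whenever $s=0$, so the $s=0$ slice contributes only the origin and the spurious recession directions are excised; the free variable $t$ can always be taken large enough when $s>0$. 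Replacing your single $1\times 1$ block $t\ge 0$ by these $n$ copies of a $2\times 2$ block is the missing idea. With that substitution (and then your conic encoding for~(b), which is the same as the paper's $\wt K+\wt L$ identity) the argument is complete.
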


When $K$ is an unbounded polyhedron, the cone generated by $K$ need
not be closed. Therefore occurence of the number $2$ on the right
hand sides of Proposition \ref{sxconvhull} cannot be avoided.

\begin{proof}
(a)
Assume $d=\sxdeg(K)<\infty$. Then $K$ can be written in the form
$$K\>=\>\Bigl\{x\in\R^n\colon\ex y\in\R^m\ \all\nu=1,\dots,r\
A^{(\nu)}+\sum_ix_iB_i^{(\nu)}+\sum_jy_jC_j^{(\nu)}\succeq0\Bigr\}$$
with symmetric matrices $A^{(\nu)},\,B_i^{(\nu)},\,C_j^{(\nu)}$ of
size $d\times d$ ($1\le\nu\le r$). Then $\cone(K)$ is the set of
$x\in\R^n$ such that there exist $(y,s,t)\in\R^m\times\R\times\R$
with
$$sA^{(\nu)}+\sum_ix_iB^{(\nu)}_i+\sum_jy_jC^{(\nu)}_j\>\succeq\>0$$
($\nu=1,\dots,r$) and
$$\begin{pmatrix}s&x_i\\x_i&t\end{pmatrix}\>\succeq\>0,\quad
i=1,\dots,n.$$
(This elegant argument is due to Netzer and Sinn, see
\cite[Proposition 2.1]{ns}.)

(b)
Let $\wt K$ resp.\ $\wt L$ be the conic hull of $K\times\{1\}$ resp.\
$L\times\{1\}$ in $\R^n\times\R$.
Then $\conv(K\cup L)=\{x\in\R^n\colon(x,1)\in\wt K+\wt L\}$, so
assertion (b) follows from (a) and Lemma \ref{sxdegelem}(b),\,(d).
\end{proof}

\begin{prop}\label{sxdual2}%
Let $C\subset\R^n$ be a convex cone, and let $C^*$ be its dual cone.
Then $\sxdeg(C^*)\le\sxdeg(C)$.
\end{prop}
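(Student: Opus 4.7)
The plan is to express $C$ as a linear image of a ``homogeneous'' spectrahedral cone of size $\le d$ (i.e.\ one defined by LMIs with no constant terms), and then dualize, using the self-duality of the psd cones together with Lemma \ref{sxdegelem}. Set $d=\sxdeg(C)$; the case $d=0$ is trivial, since then $C$ is a linear subspace and $C^*$ its orthogonal complement, so assume $d\ge1$. Starting from $C=f(\varphi^{-1}((\sfS^d_\plus)^m))$ with $f,\varphi$ affine-linear, I would split off constants $f=F+f_0$, $\varphi=\Phi+\varphi_0$, introduce a slack variable $t$, and consider
\[
\wt S\>:=\>\bigl\{(s,t)\in\R^k\times\R\colon t\ge0,\ \Phi(s)+t\varphi_0\in(\sfS^d_\plus)^m\bigr\}
\]
together with the linear map $\wt f(s,t):=F(s)+tf_0$. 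Then $\wt S$ is a spectrahedral cone with $\spdeg(\wt S)\le d$ (using $\R_{\ge0}=\sfS^1_\plus$ and $d\ge1$); the $t=1$ slice of $\wt f(\wt S)$ reproduces $C$, and for $t=0$ the vector $F(s)$ with $\Phi(s)\succeq0$ is a recession direction of $C$, which lies in $C$ since $C$ is a closed convex cone. Hence $\wt f(\wt S)=C$.

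Next, since $\wt f$ is linear, dualization gives $C^*=(\wt f^*)^{-1}(\wt S^*)$, so by Lemma \ref{sxdegelem}(b) it suffices to prove $\sxdeg(\wt S^*)\le d$. Writing $\wt S=\Psi^{-1}(K)$ with $K:=\sfS^1_\plus\times(\sfS^d_\plus)^m$ (self-dual) and $\Psi$ linear, the standard conic duality identity yields $\wt S^*=\overline{\Psi^*(K)}$. The set $\Psi^*(K)$ is itself a linear image of $K$, and therefore has $\sxdeg\le d$ by Lemma \ref{sxdegelem}(a).

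The hard part will be handling the closure in $\wt S^*=\overline{\Psi^*(K)}$, since a priori the closure of a spectrahedral shadow could have larger $\sxdeg$ than the shadow itself. I plan to get around this either by arranging a Slater-type strict feasibility for $\wt S$ (possibly after adjoining a small positive-definite slack in the auxiliary variables), under which $\Psi^*(K)$ is already closed and agrees with $\wt S^*$; or, more directly, by writing down an explicit conic Farkas representation of $C^*$ of the form $C^*=\{y\colon\ex W\in K,\ \wt f^*(y)=\Psi^*(W)\}$ built from the homogeneous LMI representation of $C$, and verifying the nontrivial inclusion $\subset$ by a separation argument.
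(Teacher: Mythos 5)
Your proposal follows essentially the same route as the paper: homogenize a spectrahedral lift of $C$ into a spectrahedral cone (your $\wt S$ is the paper's $S^h$), push the dualization through the linear map via Lemma~\ref{sxdegelem}(b), and then express the dual of the homogeneous spectrahedral cone as a linear image of products of psd cones by conic duality, arranging strict feasibility to dispense with the closure --- exactly the paper's ``by a standard argument we can assume that the LMIs are strictly feasible.'' One small slip: you justify $\wt f(\wt S)=C$ by asserting that the $t=0$ contributions are recession directions of $C$ and hence lie in $C$ ``since $C$ is a closed convex cone,'' but closedness of $C$ is not assumed in the statement. This is harmless, because $\wt f(\wt S)$ is a convex cone sandwiched between $C$ and $\ol C$, hence has the same dual cone $C^*$, which is all your argument needs (the paper's claim $C=g(S^h)$ has the same tacit gap and the same fix). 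The closure/strict-feasibility step that you leave as a plan is precisely what the paper also dispatches with a one-line appeal to a standard argument, so your level of detail there is comparable.
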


(See Averkov \cite[p.~135]{av} for the case where $C$ is closed and
pointed.)

\begin{proof}
Let $d=\sxdeg(C)$.
We first reduce to the case where the cone $C$ is spectrahedral.
There are a linear map $f\colon\R^s\to\R^n$ and a spectrahedron
$S\subset\R^s$ such that $f(S)=C$ and $\spdeg(S)=d$.
Let $S^h\subset\R^s\times\R$ be the homogenization of $S$
(\cite[1.13]{zi}),
i.e. $S^h=\cone(S\times\{1\})+\text{rc}(S)\times\{0\}$ where
$\text{rc}(S)$ is the recession cone of~$S$. Then $S^h$ is a
spectrahedral cone
with $\spdeg(S^h)\le d$,
and $C=g(S^h)$ for the linear map $g\colon\R^s\times\R\to\R^n$,
$g(x,t)=f(x)$.
Therefore $C^*$ is the preimage of the dual cone $(S^h)^*$ under
the dual linear map, and so $\sxdeg(C^*)\le\sxdeg((S^h)^*)$ by
Lemma \ref{sxdegelem}(b).
If we have proved $\sxdeg((S^h)^*)\le\sxdeg(S^h)$, we are therefore
done.

So let $C$ be a spectrahedral cone with a representation
$C=\{x\in\R^n\colon A_j(x)\succeq0$, $j=1,\dots,m\}$ where the
$A_j(x)=\sum_{k=1}^nx_kA_{jk}$ are linear matrix pencils in $\sfS^d$.
By a standard argument we can assume that the LMIs $A_j(x)\succeq0$
are strictly feasible.
Then if $\phi\colon(\sfS^d)^m\to\R^n$ is the linear map
$$\phi(B_1,\dots,B_m)\>=\>\Bigl(\sum_{j=1}^m\bil{B_j}{A_{j1}},
\,\dots,\,\sum_{j=1}^m\bil{B_j}{A_{jn}}\Bigr),$$
we have $C^*=\phi(\sfS^d_\plus\times\cdots\times\sfS^d_\plus)$.
\end{proof}

\begin{cor}\label{sxdual3}%
If $C\subset\R^n$ is a closed convex cone then $\sxdeg(C^*)=
\sxdeg(C)$.
\qed
\end{cor}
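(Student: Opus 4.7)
The plan is straightforward: combine Proposition \ref{sxdual2} with the bipolar theorem for closed convex cones. Proposition \ref{sxdual2} already gives one inequality, $\sxdeg(C^*)\le\sxdeg(C)$, without any closedness assumption on $C$. So only the reverse inequality requires the hypothesis that $C$ is closed.

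For the reverse direction, I would apply Proposition \ref{sxdual2} to the convex cone $C^*$ in place of $C$. This yields $\sxdeg(C^{**})\le\sxdeg(C^*)$. Since $C$ is a closed convex cone, the classical bipolar theorem gives $C^{**}=C$, and hence $\sxdeg(C)\le\sxdeg(C^*)$. Combining the two inequalities produces the claimed equality.

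There is no real obstacle here; the only thing to be mindful of is that Proposition \ref{sxdual2} does not assume closedness of the cone whose dual is being taken, so it applies directly to $C^*$ (which is automatically a closed convex cone, regardless of whether $C$ is). Thus the corollary follows in two lines from the preceding proposition and the identity $C^{**}=C$ for closed convex cones.
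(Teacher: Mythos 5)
Your proposal is correct and is exactly the argument the paper intends (the corollary is stated with an immediate \qed, signaling that it follows directly from Proposition \ref{sxdual2} applied twice together with the bipolar theorem $C^{**}=C$ for closed convex cones). Nothing further is needed.
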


\begin{cor}\label{sxdegkpk}%
Let $K\subset\R^n$ be a convex set, let $P_K\subset\R[x_1,\dots,x_n]$
be the cone of all polynomials $f$ with $\deg(f)\le1$ and $f|_K\ge0$.
Then
$$\sxdeg(\ol K)\>\le\>\sxdeg(P_K)\>\le\>\max\{1,\,\sxdeg(K)\}.$$
Similarly $\sxdeg(K^o)\le\max\{1,\,\sxdeg(K)\}$ where $K^o$ is the
polar of $K$.
\end{cor}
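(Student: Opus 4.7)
The proof rests on identifying $P_K$ as a dual cone. Identifying the space of affine-linear polynomials on $\R^n$ with $\R^{n+1}$ via $c_0+c^Tx\leftrightarrow(c_0,c)$ and using the standard pairing $\bil{(c_0,c)}{(t,y)}=c_0t+c^Ty$, one checks immediately that $P_K=\wt K^*$, where $\wt K:=\cone(K\times\{1\})\subset\R^{n+1}$.

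The central step is bounding $\sxdeg(P_K)$. When $\sxdeg(K)\ge2$: Proposition \ref{sxconvhull}(a) applied to $K\times\{1\}$, together with $\sxdeg(K\times\{1\})\le\sxdeg(K)$ from Lemma \ref{sxdegelem}(c), gives $\sxdeg(\wt K)\le\max\{2,\sxdeg(K)\}=\sxdeg(K)$; then Proposition \ref{sxdual2} yields $\sxdeg(P_K)=\sxdeg(\wt K^*)\le\sxdeg(\wt K)\le\sxdeg(K)$. When $\sxdeg(K)\le1$, the set $K$ is an affine space, a polyhedron, or empty, and in each case Farkas's lemma exhibits $P_K$ as the projection of a polyhedron, so $\sxdeg(P_K)\le1$.

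For $\sxdeg(\ol K)\le\sxdeg(P_K)$, the Hahn-Banach separation theorem applied to the convex set $K$ gives $\ol K=\{x\in\R^n:f(x)\ge0\text{ for all }f\in P_K\}=\{x:(1,x)\in P_K^*\}$, presenting $\ol K$ as an affine preimage of $P_K^*$. Hence Lemma \ref{sxdegelem}(b) combined with Proposition \ref{sxdual2} gives $\sxdeg(\ol K)\le\sxdeg(P_K^*)\le\sxdeg(P_K)$. For the polar, $y\in K^o$ iff $y^Tx\le1$ for all $x\in K$, iff the affine polynomial $1-y^Tx$ lies in $P_K$, iff $(1,-y)\in P_K$; this presents $K^o$ as an affine preimage of $P_K$, so Lemma \ref{sxdegelem}(b) yields $\sxdeg(K^o)\le\sxdeg(P_K)\le\max\{1,\sxdeg(K)\}$.

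The main obstacle is the polyhedral subcase of Step 1: one cannot apply Propositions \ref{sxconvhull}(a) and \ref{sxdual2} uniformly, because Proposition \ref{sxconvhull}(a) introduces a $\max$-with-$2$ term (reflecting the fact that $\cone$ of an unbounded polyhedron may fail to be closed). The polyhedral case must therefore be handled separately via classical Farkas duality, recovering the sharper $\max\{1,\sxdeg(K)\}$ bound.
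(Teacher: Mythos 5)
Your proof is correct and follows essentially the same route as the paper: identify $P_K$ with the dual of $\wt K=\cone(K\times\{1\})$ and apply Proposition \ref{sxconvhull}(a) together with Proposition \ref{sxdual2}, handling polyhedra separately; then realize $\ol K$ (resp.\ $K^o$) as an affine-linear preimage of $P_K^*$ (resp.\ $P_K$) and apply Lemma \ref{sxdegelem}(b) and Proposition \ref{sxdual2}. You have spelled out the duality identifications and the polyhedral case somewhat more explicitly than the paper does, but the structure and the ingredients are identical.
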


\begin{proof}
The assertion is true when $K$ is a polyhedron, so we may assume
$\sxdeg(K)\ge2$.
Since $P_K$ is identified with the dual of the cone
$\tilde K=\cone(K\times1)=\{(tx\colon t\ge0$, $x\in K\}$ in
$\R^n\times\R$, the second inequality follows from Proposition
\ref{sxconvhull}(a). The first follows from Proposition \ref{sxdual2}
(and Lemma \ref{sxdegelem}(b)) since $\ol K$ is an affine-linear
section of the dual cone $(P_K)^*$.
Similarly, $K^o$ is an affine-linear section of the cone $P_K$.
\end{proof}


\section{Equivalent characterizations of sxdeg}

Let $n\in\N$, write $x=(x_1,\dots,x_n)$ and $L=\spn(1,x_1,\dots,x_n)
\subset\R[x]$ for the space of affine-linear polynomials.

\begin{lab}\label{recall}%
Let $K\subset\R^n$ be a convex set. By definition of sxdeg, $K$ is a
spectrahedral shadow if and only if $\sxdeg(K)<\infty$. In this
section we relate the precise value of $\sxdeg(K)$ to the
characterization of spectrahedral shadows that was given in
\cite{sch:hn}: If $K$ is closed then (\cite[Theorem~3.4]{sch:hn})
$K$ is a spectrahedral shadow if and only if there exists a morphism
$\phi\colon X\to\A^n$ of affine $\R$-varieties with $\phi(X(\R))=K$
such that $\phi^*(P_K)\subset\Sigma U^2$ holds for some
finite-dimensional linear subspace $U\subset\R[X]$.
\end{lab}

\begin{lab}\label{recallexplizit}%
Since it was somewhat hidden in \cite{sch:hn}, let us recall how such
$\phi$ and $U$ can be found explicitly from a lifted LMI
representation of~$K$. Let $K\subset\R^n$ be a spectrahedral shadow,
not necessarily closed. Replacing $\R^n$ by the affine hull of $K$ we
assume that $K$ has nonempty interior. Then
$K=\pi(\varphi^{-1}(\sfS^d_\plus))$ where $\pi\colon\R^n\times\R^m\to
\R^n$, $\pi(x,y)=x$ and $\varphi\colon\R^n\times\R^m\to\sfS^d$,
$\varphi(x,y)=M_0+\sum_{i=1}^nx_iM_i+\sum_{j=1}^my_jN_j$ for suitable
matrices $M_i,\,N_j\in\sfS^d$. The LMI in this representation can be
chosen to be strictly feasible, i.e.\ we can assume that
$\varphi(u,v)\succ0$ for some pair $(u,v)\in\R^n\times\R^m$.
Let
$$X\>=\>\{(x,y,Z)\in\A^n\times\A^m\times\Sym_d\colon Z^2=
\varphi(x,y)\},$$
a closed subvariety of $\A^n\times\A^m\times\Sym_d$, and let
$\phi\colon X\to\A^n$ be defined by $\phi(x,y,Z)=x$. Clearly
$\phi(X(\R))=K$.
Given $f\in P_K$, there are (by semidefinite duality \cite{ra})
a symmetric matrix $B\succeq0$ and a real number $c\ge0$ with
$f(x)=c+\bil B{M_0}+\sum_i\bil B{M_i}\,x_i$ and with $\bil B{N_j}=0$
for $j=1,\dots,m$.
Let $V$ be a symmetric matrix with $V^2=B$, let $(x,y,Z)\in X(\R)$.
Then
$$f(x)\>=\>c+\bil B{\varphi(x,y)}\>=\>c+\bil{V^2}{Z^2}\>=\>
c+\bil{ZV}{ZV}$$
as elements of $\R[X]$. (Here we write $\bil M{M'}=\tr(M^tM')$ for
arbitrary $d\times d$ matrices $M,\,M'$.) Hence $\phi^*(f)$ is a sum
of squares of elements from the subspace
$U:=\R1+\spn(z_{ij}\colon1\le i\le j\le d)$ of $\R[X]$, where
$Z=(z_{ij})$.
\end{lab}

We are going to characterize $\sxdeg(K)$ in terms of the possible
spaces $U$ in \ref{recall}. To this end we define:

\begin{dfn}\label{dfnsosdeg}%
For $K\subset\R^n$ a convex \sa\ set, let $\sosdeg(K)$ denote the
smallest integer $d\ge0$ such that there is a morphism
$\phi\colon X\to\A^n$ of affine $\R$-varieties, together with
finitely many $\R$-linear subspaces $U_1,\dots,U_r\subset\R[X]$,
satisfying:
\begin{itemize}
\item[(1)]
$K$ is contained in the convex hull of $\phi(X(\R))$,
\item[(2)]
$\dim(U_i)\le d$ ($i=1,\dots,r$),
\item[(3)]
$\phi^*(P_K)\subset\R_\plus1+(\Sigma U_1^2)+\cdots+(\Sigma U_r^2)$
(in $\R[X]$).
\end{itemize}
If there is no such $d$ we write $\sosdeg(K)=\infty$.
\end{dfn}

The goal of this section is to prove $\sxdeg(K)=\sosdeg(K)$ whenever
$K$ is closed and convex (Theorem \ref{sxdegsosdeg} below).

\begin{prop}\label{vonszuk}%
Let $K\subset\R^n$ be convex with $\sosdeg(K)=d<\infty$. Then there
are $\phi\colon X\to\A^n$ and subspaces $U_1,\dots,U_r\subset\R[X]$
as in Definition \ref{dfnsosdeg}, such that the stronger condition
\begin{itemize}
\item[$(1')$]
$K\subset\phi(X(\R))$
\end{itemize}
holds.
\end{prop}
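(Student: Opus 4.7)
The plan is to take the given $\phi\colon X\to\A^n$ satisfying condition (1) and build a new morphism $\psi\colon Y\to\A^n$ satisfying the stronger condition $(1')$, while keeping suitable spaces $U'_{k,i}\subset\R[Y]$ of dimension $\le d$ so that (2) and (3) continue to hold. The two ingredients are (i) Carath\'eodory's theorem, which guarantees that every point of $K\subset\conv(\phi(X(\R)))\subset\R^n$ is a convex combination of at most $n+1$ points of $\phi(X(\R))$, and (ii) the standard device of encoding non-negative reals as squares so that convex combinations can be realized as the image of an algebraic morphism.

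Concretely, I would take
\[
Y\>:=\>\Bigl\{\bigl((x_0,\dots,x_n),(\lambda_0,\dots,\lambda_n)\bigr)\in X^{n+1}\times\A^{n+1}\colon\textstyle\sum_{i=0}^n\lambda_i^2=1\Bigr\}
\]
and define the morphism $\psi\colon Y\to\A^n$ by $\psi\bigl((x_i),(\lambda_i)\bigr)=\sum_i\lambda_i^2\,\phi(x_i)$. For any $y\in K$, Carath\'eodory yields $x_0,\dots,x_n\in X(\R)$ and $\mu_0,\dots,\mu_n\ge0$ with $\sum_i\mu_i=1$ and $y=\sum_i\mu_i\phi(x_i)$; taking $\lambda_i:=\sqrt{\mu_i}$ shows $y\in\psi(Y(\R))$, so $(1')$ holds.

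For (2) and (3), let $\pi_i\colon Y\to X$ be the $i$-th projection and set
\[
U'_{k,i}\>:=\>\lambda_i\cdot\pi_i^*(U_k)\>\subset\>\R[Y]
\]
for $k=1,\dots,r$ and $i=0,\dots,n$. Then $\dim U'_{k,i}\le\dim U_k\le d$ is automatic, giving (2). For (3), the key short calculation is that since $f\in P_K$ is affine-linear and $\psi=\sum_i\lambda_i^2\,(\phi\comp\pi_i)$, the defining relation $\sum_i\lambda_i^2=1$ in $\R[Y]$ yields
\[
\psi^*(f)\>=\>\sum_{i=0}^n\lambda_i^2\cdot\pi_i^*\bigl(\phi^*(f)\bigr).
\]
Substituting the given decomposition $\phi^*(f)=c+\sum_\ell g_\ell^2$ (with $c\ge0$ and each $g_\ell$ lying in some $U_k$) and distributing $\lambda_i^2=(\lambda_i)^2$ into each square then writes $\psi^*(f)$ as an element of $\R_\plus 1+\sum_{i,k}\Sigma(U'_{k,i})^2$.

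I do not expect a substantial obstacle: the proposition is a clean assembly of Carath\'eodory with the square-parametrization of $\R_\plus$, and the only place affine-linearity of $f$ is used is to pull the constant term across $\sum_i\lambda_i^2=1$ in the identity above. Everything else is a routine verification.
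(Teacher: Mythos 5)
Your construction is exactly the one in the paper: the paper takes $Y=X^{n+1}\times Z$ with $Z$ the hypersurface $z_0^2+\cdots+z_n^2=1$, the same morphism $\psi=\sum_i z_i^2(\phi\comp\pr_i)$, uses Carath\'eodory's theorem for $(1')$, and defines subspaces $V_{ij}=\R1\otimes\cdots\otimes U_j\otimes\cdots\otimes\R1\otimes\R z_i$, which is precisely your $z_i\cdot\pi_i^*(U_j)$ in tensor notation. The verification of condition (3) via the affine-linearity of $f$ together with $\sum_i z_i^2=1$ is also the same. Correct, and essentially identical to the paper's argument.
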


\begin{proof}
Let $\phi$ and the $U_i$ be as in Definition \ref{dfnsosdeg}. Then
$S:=\phi(X(\R))$ is a \sa\ set with $K\subset\conv(S)$.
Construct a morphism $\psi\colon Y\to\A^n$ as follows. Let
$Z\subset\A^{n+1}$ be the hypersurface $z_0^2+\cdots+z_n^2=1$, so
$Z(\R)$ is the unit sphere in $\R^{n+1}$. Let
$Y:=X^{n+1}\times Z=X\times\cdots\times X\times Z$, and let
$\psi\colon Y\to\A^n$ be defined by
$$\psi\bigl(x_0,\dots,x_n;\,z_0,\dots,z_n\bigr)\>=\>\sum_{i=0}^n
z_i^2\,\phi(x_i).$$
Then $K\subset\conv(S)=\psi(Y(\R))$ by Carath\'eodory's theorem.
The coordinate ring of $Y$ is $\R[Y]=\R[X]\otimes\cdots\otimes\R[X]
\otimes\R[Z]$ ($n+1$ tensor factors $\R[X]$). For $0\le i\le n$ and
$1\le j\le r$ define the subspace $V_{ij}$ of $\R[Y]$ by
$$V_{ij}\>:=\>\R1\otimes\cdots\otimes U_j\otimes\cdots\otimes\R1
\otimes\R z_i$$
with $U_j$ at position~$i$.
Then $\dim(V_{ij})=\dim(U_j)\le d$ for all $i,j$, and $\psi^*(P_K)
\subset\R_\plus1+\sum_{i,j}(\Sigma V_{ij}^2)$. Indeed, if $f\in P_K$
then for $j=1,\dots,r$ there are elements $g_{jk}\in U_j$ with
$\phi^*(f)=c+\sum_{j=1}^r\sum_kg_{jk}^2$ for some $c\in\R$ with
$c\ge0$, by~(3).
Therefore, if we evaluate the pullback $\psi^*(f)\in\R[Y]$ at a
tuple $(\bfxi;\bfzeta)=(\xi_0,\dots,\xi_n;\,\zeta_0,\dots,\zeta_n)\in
X^{n+1}\times Z$ (of geometric points), we get
$$\psi^*(f)(\bfxi;\bfzeta)\>=\>\sum_{i=0}^nf(\phi(\xi_i))\cdot
\zeta_i^2\>=\>c+\sum_{i=0}^n\sum_{j=1}^m\sum_kg_{jk}(\xi_i)^2\cdot
\zeta_i^2$$
So, as an element of $\R[Y]$, we have
$$\psi^*(f)\>=\>c+\sum_{i=0}^n\sum_{j=1}^n\sum_k\bigl(1\otimes\cdots
\otimes g_{jk}\otimes\cdots\otimes1\otimes z_i\bigr)^2$$
and the tensor that gets squared in the $(i,j,k)$-summand lies in
$V_{ij}$, for each triple $(i,j,k)$. Hence $\psi$ and the $V_{ij}$
satisfy Definition \ref{dfnsosdeg} with $(1')$ instead of~$(1)$.
\end{proof}

\begin{lem}\label{sxsoscone}%
Let $A$ be an $\R$-algebra, let $U_1,\dots,U_r\subset A$ be
linear subspaces with $\dim(U_i)\le d$ ($i=1,\dots,r$). Then
$C:=\Sigma U_1^2+\cdots+\Sigma U_r^2$ is a convex cone with
$\sxdeg(C)\le d$.
\end{lem}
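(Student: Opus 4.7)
The plan is as follows. First, $C$ is manifestly a convex cone: each $\Sigma U_i^2$ is closed under addition and under multiplication by non-negative reals (since $\lambda\sigma^2=(\sqrt\lambda\,\sigma)^2$), and the sum of convex cones is a convex cone. Moreover $C$ lies in the finite-dimensional subspace $W:=U_1U_1+\cdots+U_rU_r$ of $A$, so after choosing a basis we may regard $C$ as a subset of a finite-dimensional real vector space and speak sensibly of $\sxdeg(C)$.

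The bulk of the argument is to exhibit $C$ as the image, under a single linear map, of a spectrahedron described by LMIs of size $\le d$. For each $i$ set $d_i:=\dim(U_i)\le d$, fix a basis $u^{(i)}_1,\dots,u^{(i)}_{d_i}$ of $U_i$, and define the $\R$-linear map
\[
\mu_i\colon\sfS^{d_i}\to A,\qquad \mu_i(B)\>=\>\sum_{j,k=1}^{d_i}b_{jk}\,u^{(i)}_j u^{(i)}_k.
\]
The key point is that $\mu_i(\sfS^{d_i}_\plus)=\Sigma U_i^2$: if $B\succeq0$ one writes $B=\sum_\ell v_\ell v_\ell^t$ and gets $\mu_i(B)=\sum_\ell\bigl(\sum_jv_{\ell j}u^{(i)}_j\bigr)^2\in\Sigma U_i^2$, and conversely any $\sum_\ell(\sum_jc_{\ell j}u^{(i)}_j)^2$ equals $\mu_i\bigl(\sum_\ell c_\ell c_\ell^t\bigr)$.

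Now assemble the pieces. Consider the linear map
\[
\Phi\colon\sfS^{d_1}\times\cdots\times\sfS^{d_r}\to W,\qquad
\Phi(B_1,\dots,B_r)\>=\>\sum_{i=1}^r\mu_i(B_i),
\]
and the set $T:=\sfS^{d_1}_\plus\times\cdots\times\sfS^{d_r}_\plus$ in the domain. By the previous paragraph $\Phi(T)=\sum_i\Sigma U_i^2=C$. The set $T$ is a spectrahedron in $\sfS^{d_1}\times\cdots\times\sfS^{d_r}$, cut out by the $r$ LMIs ``the $i$-th block is positive semidefinite'' of sizes $d_1,\dots,d_r\le d$. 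An LMI of size $k\le d$ can be enlarged to size $d$ by replacing the pencil $A_0+\sum x_iA_i$ with its block-diagonal padding $\bigl(\begin{smallmatrix}A_0+\sum x_iA_i&0\\0&I_{d-k}\end{smallmatrix}\bigr)$, which is positive semidefinite exactly when the original pencil is; hence $\spdeg(T)\le d$.

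Since $C=\Phi(T)$ is a linear image of the spectrahedron $T$ of $\spdeg\le d$, we conclude $\sxdeg(C)\le d$ directly from Definition~\ref{dfnsxdeg}. No serious obstacle is expected here; the only thing to be a little careful about is the routine padding of LMIs of varying sizes to a common size~$d$, and the observation that $\sxdeg$ makes sense for subsets of the finite-dimensional space $W$ after any choice of basis.
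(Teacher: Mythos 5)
Your proof is correct and takes essentially the same approach as the paper's: the key step in both is the linear map from $\sfS^{d_i}$ (or $\sfS^d$) onto $\Sigma U_i^2$ induced by a basis (or a $d$-element spanning set) of $U_i$. The paper reduces to the case $r=1$ by invoking Lemma~\ref{sxdegelem}(d) (whose proof itself uses the same product construction), whereas you inline that reduction by directly forming the product spectrahedron $\sfS^{d_1}_\plus\times\cdots\times\sfS^{d_r}_\plus$ and padding LMIs to size $d$; these are the same argument written out at different levels of explicitness.
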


\begin{proof}
$C$ is a cone in the finite-dimensional subspace $\sum_{i=1}^rU_iU_i$
of $A$. By Lemma \ref{sxdegelem}(d) it suffices to prove the claim
for $r=1$, i.e.\ for $C=\Sigma U^2$ where $\dim(U)\le d$. If
$u_1,\dots,u_d$ is a system of linear generators of $U$ then the
linear map
$$\pi\colon\sfS^d\to UU,\quad(a_{ij})\mapsto\sum_{i,j}a_{ij}u_iu_j$$
satisfies $\pi(\sfS^d_\plus)=\Sigma U^2$.
\end{proof}

\begin{lem}\label{sxlesos}%
Let $K\subset\R^n$ be convex and \sa. Then $\sxdeg(\ol K)\le
\sosdeg(K)$.
\end{lem}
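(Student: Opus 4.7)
The strategy is to exhibit $P_K$ as an affine-linear preimage of the sums-of-squares cone $C$ appearing in the definition of $\sosdeg$, and then chain three earlier results: Lemma \ref{sxsoscone} bounds $\sxdeg(C)$, Lemma \ref{sxdegelem}(b) transports the bound to $P_K$, and Corollary \ref{sxdegkpk} transports it further to $\ol K$.

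Assume $d:=\sosdeg(K)<\infty$. By Proposition \ref{vonszuk} we may pick $\phi\colon X\to\A^n$ together with subspaces $U_1,\dots,U_r\subset\R[X]$ of dimension $\le d$, satisfying the strengthened condition $K\subset\phi(X(\R))$ as well as $\phi^*(P_K)\subset C$, where $C:=\R_\plus 1+\Sigma U_1^2+\cdots+\Sigma U_r^2$. The key observation is the identity
\[
P_K\>=\>\{\,f\in L\colon \phi^*(f)\in C\,\},
\]
where $L=\spn(1,x_1,\dots,x_n)$. The inclusion ``$\subset$'' is exactly condition (3) of Definition \ref{dfnsosdeg}, while the reverse follows because every element of $C$ is non-negative on $X(\R)$, so $\phi^*(f)\in C$ forces $f\ge0$ on $\phi(X(\R))\supset K$.

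To invoke Lemma \ref{sxdegelem}(b), fix a finite-dimensional subspace $W\subset\R[X]$ containing both $C$ and the image $\phi^*(L)$, for instance $W:=\R1+\sum_iU_iU_i+\phi^*(L)$. Then $\phi^*|_L\colon L\to W$ is an affine-linear map between finite-dimensional vector spaces, so the lemma gives $\sxdeg(P_K)\le\sxdeg(C)$. Applying Lemma \ref{sxsoscone} to the subspaces $\R1,\,U_1,\dots,U_r$ (each of dimension $\le\max\{1,d\}$) yields $\sxdeg(C)\le\max\{1,d\}$, and Corollary \ref{sxdegkpk} then gives $\sxdeg(\ol K)\le\sxdeg(P_K)\le\max\{1,d\}$, which is $\le d$ as soon as $d\ge1$. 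In the degenerate case $d=0$ one has $C=\R_\plus 1$, so every $f\in P_K$ restricts to a non-negative constant on $K\subset\phi(X(\R))$; a standard supporting-hyperplane argument then forces $\ol K$ to be an affine subspace, so $\sxdeg(\ol K)=0$ in that case as well. The only real obstacle is bookkeeping — arranging a finite-dimensional ambient space $W$ so that Lemma \ref{sxdegelem}(b) applies cleanly — since the substantive content is entirely captured by the preimage identity above.
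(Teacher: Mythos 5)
Your proof is correct and follows essentially the same route as the paper: invoke Proposition \ref{vonszuk}, establish the preimage identity $P_K=(\phi^*|_L)^{-1}(C)$ for the cone $C=\R_\plus1+\sum_i\Sigma U_i^2$, bound $\sxdeg(C)$ via Lemma \ref{sxsoscone}, transport through Lemma \ref{sxdegelem}(b), and finish with Corollary \ref{sxdegkpk}, treating $d=0$ separately. The only cosmetic difference is that you make the finite-dimensional ambient space $W$ explicit before applying Lemma \ref{sxdegelem}(b), which the paper leaves implicit by noting $C$ sits inside $\R+\sum_iU_iU_i$.
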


\begin{proof}
Let $d=\sosdeg(K)<\infty$. We can assume to have
$\phi\colon X\to\A^n$ and $U_i\subset\R[X]$ as in Proposition
\ref{vonszuk}.
If $d=0$ then $\phi^*(P_K)\subset\R_\plus1$. This implies that $K$
is an affine subspace (and so $\sxdeg(K)=0$). Indeed, otherwise there
would exist $f\in P_K$ such that $f$ is not constant on $K$.
But $\phi^*(f)=c$ is a constant, so $f\equiv c$ on the image of
$\phi$, contradicting $K\subset\phi(X(\R))$.

Let now $d\ge1$. By Corollary \ref{sxdegkpk} it suffices to show
$\sxdeg(P_K)\le d$. The convex cone
$C:=\R_\plus+\sum_{i=1}^r(\Sigma U_i^2)$ in $\R+\sum_{i=1}^rU_iU_i
\subset\R[X]$ satisfies $\sxdeg(C)\le d$ by Lemma \ref{sxsoscone},
and $\phi^*(P_K)\subset C$ holds by assumption. On the other hand,
elements of $C$ are nonnegative on $X(\R)$. Therefore every linear
$f\in\R[x]$ with $\phi^*(f)\in C$ is nonnegative on~$K$. This shows
$P_K=(\phi^*)^{-1}(C)$, so the proof is completed by Lemma
\ref{sxdegelem}(b).
\end{proof}

\begin{rem}
In Lemma \ref{sxlesos} the inequality $\sxdeg(K)\le\sosdeg(K)$
need not hold. For example $\sosdeg(K)=1$ but $\sxdeg(K)\ge2$ if $K$
is a dense but not closed convex subset of a polyhedron.
\end{rem}

The next lemma is the analogue of Lemma \ref{sxdegelem} for the
invariant $\sosdeg$:

\begin{lem}\label{sosdegelem}%
Let $f\colon\R^m\to\R^n$ be an affine-linear map, let $K\subset\R^n$
and $L\subset\R^m$ be convex sets. Then
\begin{itemize}
\item[(a)]
$\sosdeg f(L)\le\sosdeg(L)$,
\item[(b)]
$\sosdeg f^{-1}(K)\le\sosdeg(K)$,
\item[(c)]
$\sosdeg(K\times L)\le\max\{\sosdeg(K),\,\sosdeg(L)\}$.
\end{itemize}
\end{lem}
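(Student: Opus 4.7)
The plan is to mirror the proof of Lemma~\ref{sxdegelem}, now translated into the algebraic language of Definition~\ref{dfnsosdeg}; throughout I would fix witnesses $(\phi,U_1,\dots,U_r)$ satisfying the stronger condition $(1')$ from Proposition~\ref{vonszuk}.

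Part (a) is immediate. Given witnesses $(\phi\colon X\to\A^m,U_1,\dots,U_r)$ for $\sosdeg(L)\le d$, the morphism $f\comp\phi\colon X\to\A^n$ together with the same $U_i$ witnesses $\sosdeg(f(L))\le d$, because every $g\in P_{f(L)}$ satisfies $f^*(g)\in P_L$, and hence $(f\comp\phi)^*(g)=\phi^*(f^*(g))$ already lies in the required sos cone.

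For part (c) I would form the product $X_K\times X_L$ with morphism $\phi_K\times\phi_L\colon X_K\times X_L\to\A^{n+m}$, and use the subspaces $U_i\otimes\R1$ and $\R1\otimes W_j$ of $\R[X_K]\otimes\R[X_L]=\R[X_K\times X_L]$, which retain the dimension bound. The one nontrivial input is the elementary convex-analytic observation that every $h(x,y)=c+a\cdot x+b\cdot y$ in $P_{K\times L}$ decomposes as $c_0+u(x)+v(y)$ with $c_0\ge0$, $u\in P_K$, $v\in P_L$; this is obtained by absorbing the support functionals $\inf_K(a\cdot x)$ and $\inf_L(b\cdot y)$ into $u$ and $v$ respectively, while the inequality $c_0\ge0$ reads off from $h\ge0$ on $K\times L$. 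Pullback then splits cleanly across the two tensor factors.

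For part (b) I would form the fibre product $Y:=\A^m\times_{\A^n}X$ (using $f$ and $\phi$), with projection $\psi\colon Y\to\A^m$. Condition $(1')$ immediately yields $f^{-1}(K)\subset\psi(Y(\R))$, and the images of the $U_i$ under the natural map $\R[X]\to\R[Y]$ have dimension at most $d$. The main obstacle is to place $\psi^*(h)$ in the sos cone of $\R[Y]$ for each $h\in P_{f^{-1}(K)}$: such an $h$ must be constant on the kernel of the linear part of $f$ (otherwise $h$ is unbounded below on a full translate of this kernel inside $f^{-1}(K)$), hence descends to an affine-linear $g$ on $V:=f(\R^m)$ with $g\ge0$ on $K\cap V$. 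The fibre-product relations $f_i(y)=\phi^*(x_i)$ force $\phi^*(I(V))=0$ in $\R[Y]$, so $\R[X]\to\R[Y]$ factors through $\R[X']$ for $X':=\phi^{-1}(V)$, and one identifies $\psi^*(h)$ with the image of $\phi'^*(g)$. The remaining point, that $\phi'^*(g)$ actually lies in the sos cone of $\R[X']$, follows by combining the closedness of the finite-dimensional sos cone with the convex-analytic fact that restriction to $V$ sends $P_K$ onto a dense subcone of $P_{K\cap V}$; the hypothesis $\phi^*(P_K)\subset\R_\plus1+\sum_i\Sigma U_i^2$ then extends by continuity to $\phi'^*(P_{K\cap V})$, which is the desired containment.
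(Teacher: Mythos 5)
Parts (a) and (c) of your proposal essentially match what the paper does (the paper simply declares them ``clear''; your explicit decomposition of $P_{K\times L}$ into $c_0+u(x)+v(y)$ is a correct elaboration). For part (b), the paper's argument also forms the fibre product $Y=\A^m\times_{\A^n}X$ and uses $V_i=g^*(U_i)$ — that much you do identically — but the paper then simply asserts ``$P_{f^{-1}(K)}=f^*(P_K)$'' and concludes at once, whereas you go down a much longer route. You are actually right to be suspicious of that step: the stated equality is \emph{false} in general. A concrete counterexample: take $f\colon\R\to\R^2$, $f(t)=(t,0)$, and $K=\{(x,y)\colon y\ge x^2\}$; then $f^{-1}(K)=\{0\}$, so $h(t)=t\in P_{\{0\}}$, but any $g(x,y)=x+\beta y\in\R[x,y]$ with $g\circ f=h$ fails to be nonnegative on $K$ near the origin. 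What is true is only $P_{f^{-1}(K)}=\overline{f^*(P_K)}$ (cf.\ the closure in Rockafellar's $(A^{-1}C)^\circ=\mathrm{cl}(A^*C^\circ)$), and this is exactly the density fact you invoke. So your route is not just different — it is genuinely more careful, and your factorization of $h$ through $V=f(\R^m)$ together with a continuity argument is the correct way to repair the paper's step.

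That said, your repair leans on ``the closedness of the finite-dimensional sos cone'' in $\R[X']$ (or $\R[Y]$), and this is not automatic: in an arbitrary $\R$-algebra a sum of squares can vanish without each summand vanishing (e.g.\ $x^2+y^2=0$ in $\R[x,y]/(x^2+y^2)$), and then the image of the psd cone under the multiplication map $\Sym^2U\to UU$ need not be closed. To make the continuity argument airtight you should first reduce (harmlessly, since only $X(\R)$ enters the conditions of Definition~\ref{dfnsosdeg}) to the case where $X$ is the real-Zariski closure of $X(\R)$; then sums of squares in $\R[X]$ vanish only term-by-term, the kernel of $\Sym^2U\to UU$ meets the psd cone trivially, and $\R_\plus1+\sum_i\Sigma U_i^2$ is closed. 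Also be slightly careful that you need the closedness in $\R[Y]$ (resp.\ $\R[X']$), not just in $\R[X]$; the same reduction applies there. With that caveat fixed, your argument is sound and fills a real gap in the paper's proof of (b).
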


\begin{proof}
(a) and (c) are clear.
For (b) let $\phi\colon X\to\A^n$ be a morphism of affine varieties
with $K\subset\phi(X(\R))$ and $\phi^*(P_K)\subset
\sum_{i=1}^m(\Sigma U_i^2)$ with subspaces $U_i\subset\R[X]$ of
dimension $\le d$ ($i=1,\dots,m$). In the cartesian square (fibre
product)
$$\begin{tikzcd}[ampersand replacement=\&]
Y \ar{r}{g} \ar{d}[swap]{\psi} \& X \ar{d}{\phi} \\
\A^m \ar{r}{f} \& \A^n
\end{tikzcd}$$
we have $f^{-1}(K)\subset\psi(Y(\R))$.
We can assume $f^{-1}(K)\ne\emptyset$.
Then $P_{f^{-1}(K)}=f^*(P_K)$ holds.
The subspaces $V_i:=g^*(U_i)$ of $\R[Y]$ satisfy $\dim(V_i)\le d$
($i=1,\dots,m$), and
$$\psi^*(P_{f^{-1}(K)})\>=\>\psi^*f^*(P_K)\>=\>g^*\phi^*(P_K)
\>\subset\>g^*\Bigl(\sum_i\Sigma U_i^2\Bigr)\>\subset\>
\sum_i\Sigma V_i^2,$$
whence $\sosdeg(f^{-1}(K))\le d$.
\end{proof}

\begin{lem}\label{sosdeglesxdeg}%
If $K\subset\R^n$ is convex then $\sosdeg(K)\le\sxdeg(K)$.
\end{lem}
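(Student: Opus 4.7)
The plan is to reduce to the case of a spectrahedron and then unpack the explicit construction of \ref{recallexplizit}, keeping a close accounting of the dimensions of the subspaces that appear. If $d:=\sxdeg(K)=\infty$ there is nothing to show. If $d=0$ then $K$ is an affine subspace, every $f\in P_K$ is constant on $K$ with non-negative value, and taking $\phi$ to be the inclusion $K\into\A^n$ (with no subspaces $U_i$) gives $\sosdeg(K)=0$ trivially. So assume $1\le d<\infty$ and write $K=\pi(S)$ with $\pi\colon\R^s\to\R^n$ affine-linear and $S=\varphi^{-1}(\sfS^d_\plus\times\cdots\times\sfS^d_\plus)\subset\R^s$ a spectrahedron cut out by $N$ LMIs $\varphi^{(\nu)}(x)\succeq0$ of size $d$. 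By Lemma \ref{sosdegelem}(a), $\sosdeg(K)\le\sosdeg(S)$, so the task reduces to showing $\sosdeg(S)\le d$; after the standard regularization used in \ref{recallexplizit} I may further assume each LMI is strictly feasible.

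Next I would form the affine variety
$$
X\>:=\>\bigl\{(x,Z_1,\dots,Z_N)\colon Z_\nu\in\Sym_d,\ Z_\nu^2=\varphi^{(\nu)}(x)\bigr\}
$$
with $\phi\colon X\to\A^s$ the projection onto the $x$-factor. Every psd matrix admits a psd symmetric square root, so $S\subset\phi(X(\R))$. Given $f\in P_S$, strictly feasible semidefinite duality produces a polynomial identity
$$
f\>=\>c+\sum_{\nu=1}^N\bil{B_\nu}{\varphi^{(\nu)}}
$$
in $\R[x_1,\dots,x_s]$ with $c\ge0$ and $B_\nu\in\sfS^d_\plus$. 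Factoring $B_\nu=V_\nu^2$ with $V_\nu\in\sfS^d_\plus$ and pulling back yields
$$
\phi^*(f)\>=\>c+\sum_{\nu=1}^N\bil{Z_\nu V_\nu}{Z_\nu V_\nu}\>=\>c+\sum_{\nu,i,j}(Z_\nu V_\nu)_{ij}^2
$$
in $\R[X]$.

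The decisive observation---and the only place the LMI size $d$ actually enters---is that for each fixed pair $(\nu,i)$, the entries $(Z_\nu V_\nu)_{ij}$ (as $j$ varies) all lie in the subspace $U_{\nu,i}:=\spn\bigl((Z_\nu)_{i1},\dots,(Z_\nu)_{id}\bigr)\subset\R[X]$, which has dimension at most $d$. Hence $\sum_j(Z_\nu V_\nu)_{ij}^2\in\Sigma U_{\nu,i}^2$ for every $\nu,i$, and therefore $\phi^*(f)\in\R_\plus 1+\sum_{\nu,i}\Sigma U_{\nu,i}^2$, which proves $\sosdeg(S)\le d$. The one point I expect to require genuine care is invoking semidefinite duality in the correct form: one needs an \emph{algebraic} identity in $\R[x]$ with $(c,B_\nu)$ in the correct cones, not merely non-negativity on~$S$. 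Strict feasibility closes the duality gap and delivers precisely such a certificate, after which the remainder is a direct matrix-algebra unwinding.
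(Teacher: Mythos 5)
Your proof is correct, but it takes a genuinely different route from the paper's. You stop the reduction at the level of the spectrahedron $S$ (using only Lemma \ref{sosdegelem}(a)) and then invoke semidefinite duality under a strict-feasibility hypothesis to produce the identity $f=c+\sum_\nu\bil{B_\nu}{\varphi^{(\nu)}}$. The paper instead continues the reduction via Lemma \ref{sosdegelem}(b) and (c) all the way down to the single cone $\sfS^d_\plus$: since $\sosdeg(K)\le\sosdeg(\varphi^{-1}((\sfS^d_\plus)^m))\le\sosdeg((\sfS^d_\plus)^m)\le\sosdeg(\sfS^d_\plus)$, it suffices to treat $\sfS^d_\plus$, where the needed algebraic identity is immediate from self-duality and the fact that psd matrices are sums of rank-one matrices $bb^t$ --- no duality theorem and no Slater condition required. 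The trade-off is that your version handles the spectrahedron in one shot but has to carry around the regularization argument guaranteeing strict feasibility (including the point that the individual LMI blocks may need to be shrunk to subspaces where they become strictly feasible), whereas the paper's version outsources all of that to the soft Lemma \ref{sosdegelem}. The decisive observation is the same in both: in the parameterization of the psd cone by $Z\mapsto Z^2$ (or $A\mapsto AA^t$), the rows (equivalently columns, by symmetry) of $Z$ each span a $d$-dimensional subspace of coordinate functions, and after right-multiplication by the constant matrix $V_\nu$ the entries of a fixed row of $Z_\nu V_\nu$ stay inside that row subspace. Your accounting of the dimensions is accurate, and your use of Proposition \ref{vonszuk}'s condition $(1')$ via the existence of psd square roots is also fine.
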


\begin{proof}
Let $\sxdeg(K)=d<\infty$, so there are affine-linear maps
$\R^n\labelot\pi\R^s\labelto\varphi(\sfS^d)^m$ such that
$K=\pi(\varphi^{-1}(C))$ for $C=(\sfS^d_\plus)^m$. By Lemma
\ref{sosdegelem} it suffices to show $\sosdeg(\sfS^d_\plus)\le d$.

To this end consider the morphism $\phi\colon M_d\to\Sym_d$ given by
$\phi(A)=AA^t$. Let $x_{ij}=x_{ji}$ be the coordinates on $\Sym_d$
and $y_{ij}$ those on $M_d$ ($1\le i,j\le d$). The ring homomorphism
$\phi^*\colon\R[\Sym_d]\to\R[M_d]$ is given by
$\phi^*(x_{ij})=\sum_ky_{ik}y_{jk}$.
For $k=1,\dots,d$ let
$$U_k\>:=\>\spn(y_{1k},\dots,y_{dk})\>\subset\>\R[M_d].$$
Since the cone $\sfS^d_\plus$ is self-dual, the linear forms on $\sfS^d$
that are nonnegative on $\sfS^d_\plus$ are precisely the linear forms
$f_B=\sum_{i,j}b_{ij}x_{ij}$, where $B=(b_{ij})\in\sfS^d_\plus$ is an
arbitrary psd matrix.
We claim that $\phi^*(f_B)\in(\Sigma U_1^2)+\cdots+(\Sigma U_d^2)$
for every $B\in\sfS^d_\plus$. To show this it suffices to consider
$B\succeq0$ with $\rk(B)=1$,
so let $B=bb^t$ with $b\in\R^n$. Then
$$\phi^*(f_B)\>=\>\sum_{i,j}b_ib_j\phi^*(x_{ij})\>=\>
\sum_{i,j,k}b_ib_jy_{ik}y_{jk}\>=\>\sum_k\Bigl(\sum_ib_iy_{ik}\Bigr)
^2$$
which shows the claim.
\end{proof}

Combining Lemmas \ref{sxlesos} and \ref{sosdeglesxdeg}, we have
proved:

\begin{thm}\label{sxdegsosdeg}%
For every convex set $K\subset\R^n$ one has
$$\sxdeg(\ol K)\>\le\>\sosdeg(K)\>\le\>\sxdeg(K).$$
In particular, $\sxdeg(K)=\sosdeg(K)$ if $K$ is closed.
\qed
\end{thm}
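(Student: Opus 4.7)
The plan is essentially to invoke the two preceding lemmas directly: Lemma \ref{sxlesos} gives the left inequality $\sxdeg(\ol K)\le\sosdeg(K)$, and Lemma \ref{sosdeglesxdeg} gives the right inequality $\sosdeg(K)\le\sxdeg(K)$. Concatenating them yields the chain in the statement, so there is essentially nothing new to prove beyond what has been established.

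For the ``in particular'' clause, I would note that when $K$ is closed one has $\ol K=K$, so the outer terms of the chain coincide and all three invariants must be equal. The only mild subtlety worth flagging (for the reader) is the reason why the left-hand inequality involves $\ol K$ rather than $K$: the construction in Lemma \ref{sxlesos} produces a morphism $\phi\colon X\to\A^n$ together with subspaces $U_i\subset\R[X]$ coming from Proposition \ref{vonszuk}, and what one recovers from the sum-of-squares cone $C=\R_\plus+\sum_i\Sigma U_i^2$ via duality is the dual cone $P_K$, hence $\ol K$ via the bipolar identification used in Corollary \ref{sxdegkpk}. Passing back to $K$ itself can fail for non-closed convex sets, as illustrated by the remark after Lemma \ref{sxlesos} on a dense non-closed subset of a polyhedron.

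Since both inequalities are already proved, there is no serious obstacle; the theorem is a formal consequence and may be stated with a one-line proof plus the observation that $\ol K=K$ when $K$ is closed. No new constructions, auxiliary varieties, or duality arguments are needed here.
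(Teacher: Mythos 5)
Your proposal matches the paper exactly: the theorem is proved by concatenating Lemma \ref{sxlesos} and Lemma \ref{sosdeglesxdeg}, and the ``in particular'' clause is immediate since $\ol K=K$ for closed $K$. Your additional remarks about why the closure is needed on the left are correct and consistent with the paper's surrounding discussion.
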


\begin{rem}\label{fausirem}%
We used uniform sum of squares decompositions of elements $f\in P_K$
in algebraic varieties $X$ over $\A^n$, to define $\sosdeg(K)$, and
then to characterize $\sxdeg(K)$. Alternatively, the definition of
$\sosdeg(K)$ and the above results, can be phrased in terms of
uniform decompositions into sums of squares of \sa\ (not necessarily
continuous) functions, as was suggested by Fawzi \cite{fw2}. Both
setups are directly equivalent, since every surjective \sa\ map
between \sa\ sets has a \sa\ section.
\end{rem}

\begin{rem}\label{gptrem}%
Let $K\subset\R^n$ be a closed convex set with $\sxdeg(K)\le
d<\infty$. By the preceding remark, there exist linear spaces
$U_1,\dots,U_m$ of \sa\ functions on $\R^n$ with $\dim(U_i)=d$ for
all $i$, such that every $f\in P_K$ lies in $(\Sigma U_1^2)+\cdots+
(\Sigma U_m^2)$. Let $p_{i1},\dots,p_{id}$ be a basis of $U_i$, for
$1\le i\le m$. For $x\in K$ and $1\le i\le m$ let
$A_i(x)=\bigl(p_{ij}(x)p_{ik}(x)\bigr)_{j,k}$, a psd symmetric matrix
of rank $\le1$ and size $d\times d$. For $f\in P_K$, since
$f\in\sum_{i=1}^m(\Sigma U_i^2)$, there are symmetric matrices
$B_1(f),\dots,B_m(f)\succeq0$ of size $d\times d$ such that
$$f\>=\>\sum_{i=1}^m\sum_{j,k=1}^db_{ijk}p_{ij}p_{ik}$$
where $B_i(f)=(b_{ijk})_{j,k}$. These matrices constitute an
$(\sfS^d_\plus)^m$-factorization of $K$ in the sense of Gouveia,
Parrilo and Thomas \cite{gpt}, since
$$f(x)\>=\>\sum_{i=1}^m\bigl\langle A_i(x),\,B_i(f)\bigr\rangle$$
holds for every $x\in K$ and every $f\in P_K$.
Note that the existence of such a $(\sfS^d_\plus)^m$-factorization,
for some $m$, is essentially equivalent to $\sxdeg(K)\le d$, by a
particular case of the main result of \cite{gpt}.
\end{rem}

We use our setup to re-prove Averkov's main theorem
\cite[Theorem~2.1]{av} in a somewhat more general setting. Given a
set $S$ and an integer $k\ge1$, let $\choose Sk$ denote the set of
all $k$-element subsets of~$S$.

\begin{thm}\label{avthm2019}%
\emph{(Averkov)}
Let $K\subset\R^n$ be a closed convex \sa\ set, let $d\in\N$. Suppose
that there exist subsets $S\subset K$ of arbitrarily large finite
cardinality that have the following property:
\begin{quote}
$(*)$
For every $T\in\choose Sd$ there exists $f\in P_K$ with $f=0$ on $T$
and $f>0$ on $S\setminus T$.
\end{quote}
Then $\sxdeg(K)\ge d+1$.
\end{thm}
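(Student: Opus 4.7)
The plan is to argue by contradiction, translate property $(*)$ into a linear-algebraic separation condition on evaluation functionals, and then destroy that condition by Ramsey-theoretic homogenization.

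First, suppose $\sxdeg(K)\le d$. Since $K$ is closed, Theorem~\ref{sxdegsosdeg} gives $\sosdeg(K)\le d$, and Proposition~\ref{vonszuk} then supplies a morphism $\phi\colon X\to\A^n$ with $K\subset\phi(X(\R))$, together with $\R$-linear subspaces $U_1,\dots,U_r\subset\R[X]$ of dimension $\le d$ satisfying $\phi^*(P_K)\subset\R_\plus 1+\sum_{i=1}^r\Sigma U_i^2$. Now I would take a finite $S\subset K$ satisfying $(*)$ of size $|S|=:N$ to be chosen huge at the end, and for every $s\in S$ fix a preimage $\tilde s\in X(\R)$ with $\phi(\tilde s)=s$. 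For $i\in\{1,\dots,r\}$ and $s\in S$ set $\ell_{i,s}\in U_i^\du$, $u\mapsto u(\tilde s)$. Given $T\in\choose Sd$, expand the pull-back of the corresponding $f_T$ as $\phi^*(f_T)=c_T+\sum_{i,j}g_{ijT}^2$ with $c_T\ge 0$ and $g_{ijT}\in U_i$. Evaluating at $\tilde t$ for $t\in T$ forces $c_T=0$ and $g_{ijT}(\tilde t)=0$, while evaluating at $\tilde s$ for $s\in S\setminus T$ yields some $g_{ijT}(\tilde s)\ne 0$. This delivers the key linear statement: for every such pair $(T,s)$ there exists an index $i=i(T,s)\in\{1,\dots,r\}$ with
\[
  \ell_{i,s}\ \notin\ \spn\bigl(\ell_{i,t}\colon t\in T\bigr)\ \subset\ U_i^\du.
\]

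Next comes the combinatorial heart. For every $i\in\{1,\dots,r\}$ and every $k\in\{1,\dots,d+1\}$, I would color each $k$-element subset $A\subset S$ by the rank $r_i(A):=\dim\spn(\ell_{i,a}\colon a\in A)\in\{0,\dots,d\}$. Applying the finite Ramsey theorem for $k$-uniform hypergraphs iteratively over $k=d+1,d,\dots,1$ (the number of colors at each stage is bounded by $(d+1)^r$, and $|S|$ may be taken as large as needed by hypothesis), I would produce a subset $S'\subset S$ of size $\ge d+2$ on which every $r_i(A)$ depends only on $|A|$; denote this common value by $f_i(k)$.

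The contradiction is then immediate from a short matroid argument. The function $f_i$ is nondecreasing with increments in $\{0,1\}$ (a standard property of linear-matroid rank functions), satisfies $f_i(k)\le k$, and is bounded above by $\dim U_i\le d$; hence $f_i$ cannot grow by $1$ at every step of $\{0,1,\dots,d+1\}$, so there is some $k_0\le d$ with $f_i(k_0)=f_i(k_0+1)$. This equality means that every $k_0$-subset of $S'$ already spans $\{\ell_{i,s}\colon s\in S'\}$ linearly, whence $f_i(k)=f_i(k_0)$ for all $k\ge k_0$, in particular $f_i(d)=f_i(d+1)$. Consequently $\ell_{i,s}\in\spn(\ell_{i,t}\colon t\in T)$ holds for every $i$, every $T\in\choose{S'}d$ and every $s\in S'\setminus T$, directly contradicting the separation condition extracted from $(*)$. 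The main obstacle is the iterated Ramsey step: conceptually easy, but it forces $|S|$ to be chosen astronomically large in $d$ and $r$, which is precisely the reason the hypothesis demands subsets of arbitrarily large cardinality rather than merely one.
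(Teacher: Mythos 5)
Your proof is correct, and the overall skeleton matches the paper's: assume $\sxdeg(K)\le d$, extract small sum-of-squares witnesses $U_1,\dots,U_r$ of dimension $\le d$ via the $\sosdeg$ characterization, translate property $(*)$ into the linear-algebraic statement that for each $(T,s)$ some evaluation functional $\ell_{i,s}$ escapes $\spn(\ell_{i,t}:t\in T)$, then kill that statement with Ramsey. Where you diverge is in how the Ramsey step and the concluding linear algebra are packaged. The paper (following Averkov) colors only the $d$-element subsets of $S$ by the dimension vector $(\dim L_1(T),\dots,\dim L_m(T))$, finds a monochromatic $W$ of size exactly $d+1$, and then invokes a separate lemma of Averkov asserting that if all $T\in\binom{W}{d}$ give $L_i(T)$ of the same dimension then those subspaces must actually \emph{coincide}; that lemma is where the bound $\dim U_i\le d$ enters, since it rules out all $d+1$ functionals being independent. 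You instead color subsets of every size $k\in\{1,\dots,d+1\}$ and iterate Ramsey to get $S'$ on which each rank $r_i(A)$ depends only on $|A|$, and then finish with the clean pigeonhole observation that the rank function $f_i$ must stall somewhere in $\{0,\dots,d\}$ and hence stays flat thereafter. Your finish is arguably more transparent and more standard matroid theory, at the cost of a heavier iterated-Ramsey step; the paper's is more economical with Ramsey but needs the sharper coincidence lemma. A cosmetic difference: you take the $U_i$ from Proposition~\ref{vonszuk} inside $\R[X]$ and lift each $s\in S$ to $\tilde s\in X(\R)$, whereas the paper invokes Remark~\ref{fausirem} to get linear spaces of semialgebraic functions defined directly on $K$, avoiding the lift; both routes deliver the same functionals $\ell_{i,s}$. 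Either way the argument is sound.
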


\begin{proof}
We copy Averkov's elegant proof \cite{av} and transfer it from the
context of slack matrices to our setup. By way of contradiction,
assume $\sxdeg(K)\le d$. By Theorem \ref{sxdegsosdeg} (and Remark
\ref{fausirem}), there are linear spaces $U_1,\dots,U_m$ of
semialgebraic functions on $K$ with $\dim(U_i)\le d$ ($i=1,\dots,m$),
such that every $f\in P_K$ can be written $f=\sum_{i=1}^mg_i$ with
$g_i\in\Sigma U_i^2$ for $i=1,\dots,m$. For every $x\in K$ and
$i=1,\dots,m$ let $\lambda_{x,i}\in U_i^\du$ (dual space of $U_i$) be
defined by $\lambda_{x,i}(g):=g(x)$ ($g\in U_i$), and for every
subset $T\subset K$ write $L_i(T):=\spn(\lambda_{x,i}\colon x\in T)
\subset U_i^\du$. From property $(*)$ we infer:
\begin{itemize}
\item[$(**)$]
\emph{For every $T\in\choose Sd$ and for every $y\in S\setminus T$
there exists $1\le i\le m$ with $\lambda_{y,i}\notin L_i(T)$.}
\end{itemize}
Indeed, let $f\in P_K$ as in $(*)$, and write $f=\sum_{i=1}^mg_i$
with $g_i\in\Sigma U_i^2$. Since $f(y)\ne0$ there is $1\le i\le m$
with $g_i(y)\ne0$. On the other hand, $g_i(x)=0$ for every $x\in T$,
and so $\lambda_{y,i}$ is not a linear combination of the
$\lambda_{x,i}$ ($x\in T$).

Let $F\colon\choose Sd\to\{0,\dots,d\}^m$ be the map defined by
$$F(T)\>:=\>\bigl(\dim L_1(T),\,\dots,\,\dim L_m(T)\bigr).$$
If $|S|$ is sufficiently large then, by Ramsey's theorem for
hypergraphs, there is a set $W\in\choose S{d+1}$ such that $F$ is
constant on $\choose Wd$, see \cite[Theorem 3.4]{av} and \cite{grs}.
As in \cite{av} (claim on p~142), one shows for any
$T,\,T'\in\choose Wd$ and $1\le i\le m$, that the subspaces $L_i(T)$
and $L_i(T')$ of $U_i^\du$ have not only the same dimension, but
that they do in fact coincide.
This implies $L_i(T)=L_i(W)$ for every $T\in\choose Wd$. But this
contradicts $(**)$, as we see by taking $T\in\choose Wd$ and
$y\in W\setminus T$.
\end{proof}


\section{Local characterization of sxdeg}

In this section we use Theorem \ref{sxdegsosdeg} to prove another
characterization of $\sxdeg(K)$ which is of local nature (Theorem
\ref{sxdegsosx}). Even though it appears to be very ``abstract'', it
will be essential for the proof of our main result, see Sections
\ref{maintech} and~\ref{mainpf}.

\begin{lab}
Let $R$ be a real closed field that contains the field $\R$ of real
numbers. If $\phi\colon X\to Y$ is a morphism of affine
$\R$-varieties then $\phi_R\colon X_R\to Y_R$ denotes the base
extension of $\phi$ by $\R\to R$. Given a \sa\ set $M\subset\R^n$,
let $M_R$ denote the base field extension of $M$ to $R$ (see
\cite[Sect.~5.1]{bcr}). This is the subset of $R^n$ that is defined
by the same finite boolean combination of polynomial inequalities
as~$M$.
\end{lab}

\begin{lab}\label{valringb}%
By $B\subset R$ we denote
the canonical valuation ring of $R$, which is the convex hull of $\R$
in $R$, i.e.\ $B=\{b\in R\colon\ex a\in\R$ $-a<b<a\}$. The maximal
ideal of $B$ is $\m_B=\{b\in R\colon-\frac1n<b<\frac1n$ for
every~$n\in\N\}$. The residue field $B/\m_B$ of $B$ is $\R$, and the
residue map $B\to\R$ will be written $b\mapsto\ol b$.

We work in the $\R$-algebra $R\otimes R:=R\otimes_{\R}R$ and its
subring $B\otimes B=B\otimes_{\R}B$. The composite ring homomorphism
$B\otimes B\to B\to\R$, $b_1\otimes b_2\mapsto\ol{b_1b_2}$ will be
denoted by $\theta\mapsto\ol\theta$.

Given $\theta\in R\otimes R$, let $\rk(\theta)$ denote the
\emph{tensor rank} of $\theta$, i.e.\ the minimal number $r\ge0$
such that $\theta$ can be written as a sum of $r$ elementary tensors
$a_i\otimes b_i$ (with $a_i,\,b_i\in R$). Clearly we have
$\rk(\theta_1+\theta_2)\le\rk(\theta_1)+\rk(\theta_2)$ and
$\rk(\theta_1\theta_2)\le\rk(\theta_1)\cdot\rk(\theta_2)$.
We sometimes refer to tensors of rank $1,2,\dots$ as monomial,
binomial etc.\ tensors.
\end{lab}

\begin{dfn}\label{dfnsosx}%
Given a tensor $\theta\in R\otimes R$ which is a sum of squares in
$R\otimes R$, we define $\sosx(\theta)$ to be the smallest $d\ge0$
such that $\theta$ has a representation $\theta=1\otimes c+
\sum_{i=1}^N\theta_i^2$ with $0\le c\in R$ and $\theta_i\in
R\otimes R$ such that $\rk(\theta_i)\le d$ for $i=1,\dots,N$. If
$\theta$ is not a sum of squares in $R\otimes R$ we put
$\sosx(\theta)=\infty$.
\end{dfn}

In particular, $\sosx(\theta)=0$ if and only if $\theta=1\otimes c$
with $0\le c\in R$. We introduced this extra case only to make
Theorem \ref{sxdegsosx} below work in the $d=0$ case as well. The
following properties of sosx are obvious:

\begin{lem}\label{sosxobv}%
Let $\theta,\,\theta_1,\,\theta_2\in R\otimes R$.
\begin{itemize}
\item[(a)]
$\sosx(\theta)\le1$ iff there are $a_i,\,b_i\ge0$ in $R$ with
$\theta=\sum_ia_i\otimes b_i$.
\item[(b)]
$\sosx(\theta_1+\theta_2)\le\max\{\sosx(\theta_1),\,
\sosx(\theta_2)\}$.
\item[(c)]
If $\sosx(\theta_1)$, $\sosx(\theta_2)\ge1$ then
$\sosx(\theta_1\theta_2)\le\sosx(\theta_1)\cdot\sosx(\theta_2)$.
\qed
\end{itemize}
\end{lem}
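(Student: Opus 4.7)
The lemma collects three basic arithmetic properties of the invariant $\sosx$, and I would prove them essentially by inspecting the defining expressions.

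For (a), the idea is that a tensor of rank $\le 1$ is an elementary tensor $a\otimes b$, whose square is $a^2\otimes b^2$, a tensor of the form $p\otimes q$ with $p,q\ge 0$ (both sides are genuine squares in $R$). Conversely, since $R$ is real closed, any $a,b\ge 0$ in $R$ admit square roots, so $a\otimes b=(\sqrt{a}\otimes\sqrt{b})^2$ is a square of a rank-$1$ tensor. The auxiliary constant $1\otimes c$ with $c\ge 0$ in the defining expression is itself $(1\otimes\sqrt{c})^2$, so it can always be absorbed into the sum-of-squares part. Combining these observations gives the equivalence.

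For (b), I would simply concatenate the representations witnessing $\sosx(\theta_1)$ and $\sosx(\theta_2)$: if $\theta_k=1\otimes c_k+\sum_i\theta_{k,i}^2$ with $c_k\ge 0$ and $\rk(\theta_{k,i})\le\sosx(\theta_k)$, then summing yields $\theta_1+\theta_2=1\otimes(c_1+c_2)+\sum_{k,i}\theta_{k,i}^2$, which certifies $\sosx(\theta_1+\theta_2)\le\max\{\sosx(\theta_1),\sosx(\theta_2)\}$.

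For (c), write $\theta_k=1\otimes c_k+\sum_i\theta_{k,i}^2$ with $c_k\ge 0$ and $\rk(\theta_{k,i})\le d_k:=\sosx(\theta_k)$, and expand
\[
\theta_1\theta_2\>=\>1\otimes(c_1c_2)+\sum_j\bigl((1\otimes\sqrt{c_1})\theta_{2,j}\bigr)^2+\sum_i\bigl((1\otimes\sqrt{c_2})\theta_{1,i}\bigr)^2+\sum_{i,j}(\theta_{1,i}\theta_{2,j})^2.
\]
Using the submultiplicativity $\rk(\alpha\beta)\le\rk(\alpha)\rk(\beta)$ of tensor rank recalled in \ref{valringb}, the squared tensors in the three sums have ranks at most $d_2$, $d_1$ and $d_1d_2$ respectively. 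The leading term $1\otimes(c_1c_2)$ fits the $1\otimes c$ slot in Definition \ref{dfnsosx}. Under the hypothesis $d_1,d_2\ge 1$ each of these bounds is $\le d_1d_2$, so the expression exhibits $\sosx(\theta_1\theta_2)\le d_1d_2$. I don't anticipate any real obstacle: the only mild point is keeping track of how the constant summand $1\otimes c$ interacts with the products, which is resolved by factoring it as a rank-$1$ square using that $R$ is real closed, and noticing why the hypothesis $d_1,d_2\ge 1$ is needed for the cross terms to fit within the claimed bound.
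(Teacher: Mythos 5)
Your proof is correct, and it fills in precisely the routine verifications the paper omits (the lemma is labeled ``obvious'' and given no proof). Your use of real closedness to extract square roots in (a) and (c), and the observation that $d_1,d_2\ge1$ is what makes the cross terms $d_1$ and $d_2$ fit under the bound $d_1d_2$, are exactly the points one would check.
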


The following simple observation is important:

\begin{prop}\label{keyobs}%
Let $\theta\in B\otimes B$. If $\ol\theta\in\R$ is strictly positive
then $\theta$ can be written in the form
$$\theta\>=\>\sum_{i=1}^mu_i\otimes v_i$$
with $u_i,\,v_i\in B$ and $\ol u_i,\,\ol v_i>0$ for every~$i$. In
particular, $\sosx(\theta)\le1$.
\end{prop}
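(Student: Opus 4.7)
The plan is to use the $\R$-vector space splitting $B = \R \oplus \m_B$ to put $\theta$ in the normal form
$$\theta = c \cdot 1 \otimes 1 + x \otimes 1 + 1 \otimes y + z,$$
with $c = \ol\theta > 0$, $x, y \in \m_B$, and $z \in \m_B \otimes_\R \m_B \subset B \otimes_\R B$. The first move is to absorb the three ``linear'' summands $c \cdot 1 \otimes 1$, $x \otimes 1$ and $1 \otimes y$ into a single pure tensor. Split $c = c_1 + c_2$ with $c_1, c_2 > 0$ real, and set $r = \sqrt{c_1}$. A direct computation gives
$$(r + x/r) \otimes (r + y/r) = c_1 \cdot 1 \otimes 1 + x \otimes 1 + 1 \otimes y + (x \otimes y)/c_1,$$
whose factors lie in $B$ and have residue $r > 0$. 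Subtracting this pure tensor from $\theta$ reduces the problem to decomposing $c_2 \cdot 1 \otimes 1 + z'$, where $z' = z - (x \otimes y)/c_1 \in \m_B \otimes_\R \m_B$.

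For this residual term the key device is polarization: if $\delta, \epsilon \in \m_B$, $c'' > 0$ is real and $s = \sqrt{c''}$, then
$$c'' \cdot 1 \otimes 1 + \delta \otimes \epsilon = \frac{1}{2}(s + \delta) \otimes (s + \epsilon) + \frac{1}{2}(s - \delta) \otimes (s - \epsilon),$$
since the cross terms cancel, and all four factors on the right lie in $B$ with residue $s > 0$. Writing $z' = \sum_{i=1}^N \delta_i \otimes \epsilon_i$ with $\delta_i, \epsilon_i \in \m_B$ and splitting $c_2 = \sum_{i=1}^N c_i''$ into $N$ positive real pieces (e.g.\ $c_i'' = c_2/N$), one applies this identity term by term to write $c_2 \cdot 1 \otimes 1 + z'$ as a sum of $2N$ pure tensors, each with both factors of positive residue. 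Combining with the previous step yields a decomposition $\theta = \sum_i u_i \otimes v_i$ of the required form (the scalars $1/2$ are absorbed into one factor of each tensor).

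For the assertion $\sosx(\theta) \le 1$: if $u \in B$ satisfies $\ol u > 0$, then $u - \ol u \in \m_B$ is infinitesimal, so $u > 0$ in the real closed field $R$; likewise $v_i > 0$ for all $i$. Therefore $u_i = (\sqrt{u_i})^2$ and $v_i = (\sqrt{v_i})^2$ in $R$, so $u_i \otimes v_i = (\sqrt{u_i} \otimes \sqrt{v_i})^2$ is the square of a rank-$1$ tensor, whence $\sosx(\theta) \le 1$ (take $c = 0$ in Definition \ref{dfnsosx}); this is also immediate from Lemma \ref{sosxobv}(a). The main conceptual point is that an $\m_B \otimes_\R \m_B$ remainder cannot be absorbed by additive shifts of infinitesimals alone---such shifts inevitably leave residue-$0$ ``leakage'' tensors behind---but the polarization identity trades this leakage for spare real mass from $c_2$, which is exactly what the hypothesis $\ol\theta > 0$ provides.
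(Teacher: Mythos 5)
Your proof is correct, and it rests on the same idea as the paper's: use the splitting $B = \R\oplus\m_B$ and distribute positive real mass from $\ol\theta>0$ to each tensor summand so that every factor acquires strictly positive residue. The paper carries this out in a single regrouping step: starting from $\theta = \sum_{i=1}^n a_i\otimes b_i$, write $a_i = c_i+\alpha_i$, $b_i = d_i+\beta_i$ with $c_i,d_i\in\R$ and $\alpha_i,\beta_i\in\m_B$, choose positive reals $r,s,r_i,s_i$ with $r+s+\sum_ir_is_i=\ol\theta$, and rewrite $\theta = \sum_i(r_i+\alpha_i)\otimes(s_i+\beta_i) + \bigl(r+\sum_i(d_i-s_i)\alpha_i\bigr)\otimes1 + 1\otimes\bigl(s+\sum_i(c_i-r_i)\beta_i\bigr)$; no square roots are needed and the decomposition has at most $n+2$ monomial summands, with $n$ the length of the starting representation. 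Your variant --- first a square-root factor $(r+x/r)\otimes(r+y/r)$ absorbing the $c_1\cdot1\otimes1 + x\otimes1 + 1\otimes y$ part, then polarization on the $\m_B\otimes\m_B$ remainder $z'$ --- is equally valid and perhaps makes the role of the positive budget $\ol\theta$ more visible, at the cost of square roots and an explicit rank decomposition of $z'$. Both arguments are elementary and constructive; the paper's is marginally tidier as a one-shot regrouping, while yours cleanly separates the linear and bilinear obstructions.
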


\begin{proof}
Let $\theta=\sum_{i=1}^na_i\otimes b_i$ with $a_i,\,b_i\in B$. Write
(uniquely) $a_i=c_i+\alpha_i$, $b_i=d_i+\beta_i$ with
$c_i,\,d_i\in\R$ and $\alpha_i,\,\beta_i\in\m_B$ ($i=1,\dots,n$).
Choose strictly positive real numbers $r,\,s$ and $r_i,\,s_i$
($i=1,\dots,n$) with $r+s+\sum_{i=1}^nr_is_i=\ol\theta=
\sum_{i=1}^nc_id_i$, which is possible since $\ol\theta=\sum_{i=1}^n
\ol{a_ib_i}>0$. Then $\theta$ is equal to
{\small
$$\sum_{i=1}^n(r_i+\alpha_i)\otimes(s_i+\beta_i)+
\Bigl(r+\sum_{i=1}^n(d_i-s_i)\alpha_i\Bigr)\otimes1+
1\otimes\Bigl(s+\sum_{i=1}^n(c_i-r_i)\beta_i\Bigr)$$}%
and this decomposition has the desired form.
\end{proof}

\begin{rem}
The subset $T:=\bigl\{\sum_{i=1}^ra_i\otimes b_i\colon r\ge0$,
$a_i,\,b_i\in B$, $\ol a_i,\,\ol b_i>0\bigr\}$ of $B\otimes B$ is a
subsemiring of $B\otimes B$. It is easy to see that $T$ is
archimedean, i.e.\ $\Z+T=B\otimes B$. Indeed, if $a,\,b\in B$, choose
$m,\,n\in\N$ with $\pm\ol a<m$ and $\pm\ol b<n$, then
$$3mn+a\otimes b\>=\>(m-a)\otimes(n-b)+(m+a)\otimes n
+m\otimes(n+b),$$
and the right hand side lies in $T$.
This gives an alternative (but less explicit) proof of Proposition
\ref{keyobs}: Every $\theta\in B\otimes B$ with $\ol\theta>0$ is
strictly positive on the entire real spectrum of $B\otimes B$,
therefore $\theta\in T$ by the archimedean Positivstellensatz (e.g.\
\cite[Theorem 5.4.4]{ma}).
\end{rem}

\begin{lab}\label{tenseval}%
Let $V$ be an affine $\R$-variety, and let $R\supset\R$ be a real
closed field. We write $R[V]:=\R[V]\otimes R=\R[V]\otimes_{\R}R$ for
the extension of the coordinate ring of $V$ from $\R$ to~$R$. Recall
that $V(R)$, the set of $R$-points of $V$, is identified with the set
of $\R$-homomorphisms $\R[V]\to R$, by associating with an $R$-point
the evaluation homomorphism at this point. Given $f\in R[V]$ and
$a\in V(R)$ we define $f^\otimes(a)$, the ``outer'' or ``tensor
evaluation'' of $f$ at~$a$, to be the image of $f$ under the ring
homomorphism
$$R[V]\>=\>\R[V]\otimes R\>\labelto{a\otimes1}\>R\otimes R.$$
For example, for affine $n$-space $V=\A^n$, for $a\in R^n$ and any
$R$-polynomial $f=\sum_\alpha c_\alpha x^\alpha\in R[x]$ (with
$x=(x_1,\dots,x_n)$ and $c_\alpha\in R$) we get
$$f^\otimes(a)\>=\>\sum_\alpha a^\alpha\otimes c_\alpha\>\in\>
R\otimes R.$$
From the definition it is clear that $(f+g)^\otimes(a)=f^\otimes(a)+
g^\otimes(a)$ and $(fg)^\otimes(a)=f^\otimes(a)\cdot g^\otimes(a)$
hold. If $V=\A^n$ and $f=c_0+\sum_ic_ix_i$ is a linear polynomial
(with $c_i\in R$) then
$$f^\otimes(sa+tb)\>=\>(s\otimes1)\cdot f^\otimes(a)+(t\otimes1)\cdot
f^\otimes(b)$$
holds for any $s,\,t\in R$ with $s+t=1$.
If $\phi\colon X\to V$ is a morphism of affine $\R$-varieties, and if
$\phi_R^*\colon R[V]\to R[X]$ denotes the pullback homomorphism over
$R$, then for $f\in R[V]$ and $b\in X(R)$ we have
$$(\phi_R^*f)^\otimes(b)\>=\>f^\otimes(\phi(b)).$$
\end{lab}

\begin{lem}\label{tensevalpsd}%
Let $R\supset\R$ be real closed, let $S\subset V(\R)$ be a \sa\ set,
let $f\in R[V]$ with $f\ge0$ on $S_R$, and let $a\in S_R$. Then
$f^\otimes(a)$ is a psd element in $R\otimes R$, i.e.\ for any two
homomorphisms $\varphi_1,\,\varphi_2\colon R\to E$ into a real closed
field $E$, the image of $f^\otimes(a)$ under $\varphi\colon
R\otimes R\to E$, $a_1\otimes a_2\mapsto\varphi_1(a_1)\varphi_2(a_2)$
is nonnegative.
\end{lem}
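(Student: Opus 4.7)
The plan is to reduce the psd condition on $\varphi(f^\otimes(a))$ to a pointwise nonnegativity statement of a transferred polynomial at a transferred point in $E$, and then invoke Tarski--Seidenberg transfer between real closed fields.

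First I would unpack the tensor evaluation. Writing $f\in R[V]=\R[V]\otimes R$ as a finite sum $f=\sum_i g_i\otimes c_i$ with $g_i\in\R[V]$ and $c_i\in R$, the definition of $f^\otimes$ gives
$$f^\otimes(a)\;=\;\sum_i g_i(a)\otimes c_i\;\in\;R\otimes R.$$
Let $f_{\varphi_2}:=\sum_i g_i\otimes\varphi_2(c_i)\in\R[V]\otimes E=E[V]$ denote the polynomial obtained by applying $\varphi_2$ to the coefficients of $f$, and let $\varphi_1(a)\in V(E)$ be the point obtained by applying $\varphi_1$ coordinatewise. Since each $g_i$ has real coefficients and $\varphi_1$ fixes $\R$, we have $\varphi_1(g_i(a))=g_i(\varphi_1(a))$, hence
$$\varphi(f^\otimes(a))\;=\;\sum_i\varphi_1(g_i(a))\,\varphi_2(c_i)\;=\;f_{\varphi_2}(\varphi_1(a))\;\in\;E.$$
It remains to show $f_{\varphi_2}(\varphi_1(a))\ge0$ in~$E$.

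Because $R$ is a field each $\varphi_j$ is injective, and because the ordering of a real closed field is intrinsically determined by its squares, each $\varphi_j$ is order-preserving. Thus $\varphi_j\colon R\isoto\varphi_j(R)\subset E$ is an ordered field isomorphism onto a real closed subfield. The set $S$ is defined by a boolean combination of polynomial inequalities with real coefficients, all of which are fixed by $\varphi_1$, so the hypothesis $a\in S_R$ transfers to $\varphi_1(a)\in S_{\varphi_1(R)}\subset S_E$. For nonnegativity, the hypothesis ``$f\ge0$ on $S_R$'' is a first-order sentence in the language of ordered fields with parameters the $c_i\in R$ (together with the real defining coefficients of~$S$). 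Transporting along the ordered field isomorphism $\varphi_2\colon R\isoto\varphi_2(R)$ yields ``$f_{\varphi_2}\ge0$ on $S_{\varphi_2(R)}$'', which therefore holds in $\varphi_2(R)$; by model completeness of the theory of real closed fields, the inclusion $\varphi_2(R)\subset E$ is elementary, so the same sentence holds in $E$, i.e.\ $f_{\varphi_2}\ge0$ on $S_E$. Evaluating at $\varphi_1(a)\in S_E$ yields $\varphi(f^\otimes(a))=f_{\varphi_2}(\varphi_1(a))\ge0$, as required.

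The argument is essentially bookkeeping once one has the identification $\varphi(f^\otimes(a))=f_{\varphi_2}(\varphi_1(a))$, which is the only nonobvious step. What it makes transparent is that the two factors $\varphi_1$ and $\varphi_2$ play different roles (one transports the point $a$, the other transports the coefficients of $f$), so that the Tarski transfer must be carried out inside~$E$ between the two possibly incompatible real closed subfields $\varphi_1(R)$ and $\varphi_2(R)$.
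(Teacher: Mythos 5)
Your proof is correct and follows essentially the same route as the paper's one-line argument: identify $\varphi(f^\otimes(a))$ with $g(b)$ where $g=\varphi_2(f)\in E[V]$ and $b=\varphi_1(a)\in S_E$, then observe $g\ge0$ on $S_E$. You simply spell out the bookkeeping identity and the Tarski transfer step that the paper leaves implicit.
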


\begin{proof}
$\varphi(f^\otimes(a))=g^\otimes(b)$, where $g=\varphi_2(f)\in E[V]$
satisfies $g\ge0$ on $S_E$, and $b=\varphi_1(a)\in S_E$. So
$\varphi(f^\otimes(a))\ge0$.
\end{proof}

\begin{rem}\label{sgntenseval}%
The ring $R\otimes R$ is an integral domain (by \cite{bo}, V.17.2,
Corollaire),
and it is an easy exercise to show that its quotient field is real,
i.e.\ has an ordering.
Therefore, in the situation of Lemma \ref{tensevalpsd}, the element
$-f^\otimes(a)$ is not psd in $R\otimes R$, and in particular is not
a sum of squares, unless it is zero. This argument will be used in
the proof of the main theorem in \ref{laststepofproof}.
\end{rem}

Recall the notation $P_K=\{f\in\R[x_1,\dots,x_n]\colon\deg(f)\le1$,
$f|_K\ge0\}$ for $K\subset\R^n$. The main result of this section is:

\begin{thm}\label{sxdegsosx}%
Let $K\subset\R^n$ be a closed and convex \sa\ set, let $P=P_K$, and
let $d\ge0$ be an integer. Moreover let $S\subset K$ and $E\subset P$
be \sa\ subsets with $K=\conv(S)$ and $P=\cone(E)$. Then the
following are equivalent:
\begin{itemize}
\item[(i)]
$\sxdeg(K)\le d$;
\item[(ii)]
$\sosx f^\otimes(a)\le d$ holds for every real closed field
$R\supset\R$, every $f\in P_R$ and every $a\in K_R$;
\item[(iii)]
$\sosx f^\otimes(a)\le d$ holds for every real closed field
$R\supset\R$, every $f\in E_R$ and every $a\in S_R$.
\end{itemize}
\end{thm}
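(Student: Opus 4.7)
The plan splits the equivalence into: the trivial implication (ii) $\Rightarrow$ (iii); the ``forward'' direction (i) $\Rightarrow$ (ii), which uses Theorem \ref{sxdegsosdeg} together with Tarski transfer; the reduction (iii) $\Rightarrow$ (ii) via Carath\'eodory; and the hard ``algebraization'' direction (ii) $\Rightarrow$ (i).

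For (i) $\Rightarrow$ (ii), Theorem \ref{sxdegsosdeg} and Proposition \ref{vonszuk} produce a morphism $\phi\colon X \to \A^n$ of affine $\R$-varieties with $K \subset \phi(X(\R))$ and subspaces $U_1,\dots,U_r \subset \R[X]$ of dimension $\le d$, satisfying $\phi^*(P_K) \subset \R_\plus 1 + \sum_{i=1}^r \Sigma U_i^2$. After using Carath\'eodory to bound the lengths of SOS and convex representations, both inclusions are first-order semialgebraic statements and therefore transfer to any real closed $R \supset \R$: we obtain $K_R \subset \phi_R(X(R))$, and every $f \in P_{K,R}$ admits a decomposition $\phi_R^* f = \gamma + \sum_{i,k} g_{ik}^2$ with $0 \le \gamma \in R$ and $g_{ik} \in U_i \otimes R$. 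Given $f \in P_R$ and $a \in K_R$, lift $a$ to some $b \in X(R)$; then $f^\otimes(a) = (\phi_R^* f)^\otimes(b)$, and since $(\cdot)^\otimes(b)\colon R[X] \to R \otimes R$ is a ring homomorphism (tensor product of the evaluation $\R[X] \to R$ with $\id_R$), applying it gives $f^\otimes(a) = 1 \otimes \gamma + \sum_{i,k} (g_{ik}^\otimes(b))^2$. The key observation is that if $u_1,\dots,u_d$ is an $\R$-basis of $U_i$ and $g = \sum_k u_k \otimes r_k$ with $r_k \in R$, then $g^\otimes(b) = \sum_k u_k(b) \otimes r_k$ is a tensor of rank $\le d$, so $\sosx f^\otimes(a) \le d$.

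For (iii) $\Rightarrow$ (ii), the Tarski transfer of Carath\'eodory's theorem yields $K_R = \conv(S_R)$ and $P_R = \cone(E_R)$. Writing $a = \sum_i s_i a_i \in K_R$ with $a_i \in S_R$, $s_i \ge 0$, $\sum s_i = 1$, and $f = \sum_j \lambda_j f_j \in P_R$ with $f_j \in E_R$, $\lambda_j \ge 0$, the bilinearity identities from \ref{tenseval} give
$$f^\otimes(a) = \sum_{i,j} (s_i \otimes \lambda_j)\, f_j^\otimes(a_i).$$
Since $R$ is real closed, $s_i = \alpha_i^2$ and $\lambda_j = \beta_j^2$ for some $\alpha_i, \beta_j \in R$, so $s_i \otimes \lambda_j = (\alpha_i \otimes \beta_j)^2$ is the square of a rank-$1$ tensor. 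Lemma \ref{sosxobv}(b),(c) then propagates $\sosx \le d$ from the $f_j^\otimes(a_i)$ to $f^\otimes(a)$.

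The main obstacle is (ii) $\Rightarrow$ (i), where one must exhibit data witnessing $\sosdeg K \le d$ (equivalent to $\sxdeg K \le d$ by Theorem \ref{sxdegsosdeg}). I would apply (ii) at a generic point: let $R$ be the real closure of $\R(K)$ (handling each irreducible component of $K$ separately), and let $a \in K_R$ be the generic point. For each $f \in P_R$, the decomposition $f^\otimes(a) = 1 \otimes \gamma + \sum_{j=1}^N \theta_j^2$ with $\rk\theta_j \le d$ has entries in $R$, i.e.\ semialgebraic germs on $K$. A Carath\'eodory argument inside the finite-dimensional $R$-span $W_a := R \cdot (1 \otimes 1) + \sum_i R \cdot (a_i \otimes 1) \subset R \otimes R$ containing $\{f^\otimes(a) : f \in P_R\}$ bounds $N$ uniformly in $f$; then Tarski--Seidenberg produces uniform semialgebraic decompositions over $K$, and Remark \ref{fausirem} converts these to the algebraic data of Definition \ref{dfnsosdeg}. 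The crux is ensuring the ``first-factor'' parts $u_{jk}$ can be chosen to depend only on $a$, so that fixed $d$-dimensional subspaces $U_i$ work simultaneously for all $f \in P_K$; this should follow from the $R$-linearity of the map $f \mapsto f^\otimes(a)$, allowing a uniform choice of generators for the rank-$d$ tensor images as $f$ varies.
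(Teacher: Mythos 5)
Your treatment of (i) $\Rightarrow$ (ii), (ii) $\Rightarrow$ (iii), and (iii) $\Rightarrow$ (ii) agrees with the paper: the first uses Theorem \ref{sxdegsosdeg}, Proposition \ref{vonszuk} and Tarski transfer, the last uses Carath\'eodory plus the bilinearity formulas from \ref{tenseval} and Lemma \ref{sosxobv}; the extra observation that $s_i\otimes\lambda_j=(\alpha_i\otimes\beta_j)^2$ is equivalent to, and redundant with, Lemma \ref{sosxobv}(a).

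The direction (ii) $\Rightarrow$ (i) is where your proposal genuinely diverges, and where it has a real gap. The paper works in three compactness steps (Lemmas \ref{step1}--\ref{step3}): for fixed $(R,f,a)$ the rank-$\le d$ decomposition of $f^\otimes(a)\in R\otimes R$ produces $\R$-subspaces $U_i\subset R$ with $\dim U_i\le d$, and the variety $X=\Spec A$ is just the spectrum of the finitely generated $\R$-subalgebra $A\subset R$ generated by the coordinates of $a$ and by $\bigcup_i U_i$. Then compactness of the constructible topology of the real spectrum is used twice: once to find finitely many $\phi_i:X_i\to\A^n$ covering $K$ (for fixed $f$), combined via disjoint union; and once more to cover all of $P$, combined via fibre product over $\A^n$, using the crucial identity $\phi^*(f)=1\otimes\cdots\otimes\phi_i^*(f)\otimes\cdots\otimes1$ valid at every slot. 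You instead propose to work at a single generic point $a$ over $R=\overline{\R(K)}$ and then argue uniformity in $f$. The difficulty you flag---that the first-factor subspaces $U_i$ must be chosen uniformly as $f$ varies---is precisely what the paper's second compactness argument (Lemma \ref{step3}) resolves, and it does not follow from the $R$-linearity of $f\mapsto f^\otimes(a)$: linearity says nothing about the highly nonlinear condition $\sosx\le d$, and a priori each $f$ forces its own, unrelated rank-$\le d$ tensors $\theta_j$ with unrelated first-factor spans. Moreover, the Carath\'eodory step you invoke inside $W_a=R(1\otimes1)+\sum_i R(a_i\otimes1)$ cannot do the required work: the squares $\theta_j^2$ need not lie in $W_a$, the set of squares of rank-$\le d$ tensors is not a fixed convex cone in a finite-dimensional space (it varies with the $U_i$'s), and what needs bounding is not the number $N$ of summands (which is irrelevant to $\sosx$ and to $\sosdeg$) but the family of subspaces $U_i$. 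Without a replacement for the paper's compactness-of-the-real-spectrum argument, this direction is not established.
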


Obviously, condition (iii) is a weakening of (ii). It proves useful
if we want to get a bound on $\sxdeg(K)$ through an analysis of the
tensors $f^\otimes(a)$. Typically, $E$ may be the union of all
extreme rays of $P$ (assuming that $K$ has non-empty interior in
$\R^n$), and $S$ may be the set of extreme points of $K$ (in the case
when $K$ is compact).

\begin{lab}\label{d=0case}%
Let us first dispose of the case $d=0$. If $K$ is an affine space,
i.e.\ $\sxdeg(K)=0$, then every $f\in P$ is a nonnegative constant on
$K$, and so $f^\otimes(a)=1\otimes c$ with $c\ge0$ for every
$f,\,a$ as in (ii). If $K$ is not an affine space, there is $f\in E$
which is not constant on $K$,
and so for $R\supsetneq\R$ there is $a\in S_R$ with $f(a)\notin\R$.
Hence $f^\otimes(a)=f(a)\otimes1$ is not of the form $1\otimes c$, so
(iii) doesn't hold with $d=0$.
\end{lab}

\begin{lab}\label{sosxlesxdeg}%
In the rest of the proof we assume $d\ge1$. To show (i) $\To$ (ii),
let $K\subset\R^n$ be a convex semialgebraic set with $\sxdeg(K)=d$.
Moreover let $R\supset\R$ be real closed, let $f\in P_R$ and
$a\in K_R$.
By Theorem \ref{sxdegsosdeg} (and Proposition \ref{vonszuk}) there is
a morphism $\phi\colon X\to\A^n$ of affine $\R$-varieties with
$K\subset\phi(X(\R))$, together with linear subspaces $U_1,\dots,U_m$
of $\R[X]$ with $\dim(U_i)\le d$, such that $\phi^*(P)\subset
\R_\plus+\sum_{i=1}^m(\Sigma U_i^2)$ holds. By Tarski's transfer
principle, the analogue of this inclusion holds over $R$ as well. So
there exist elements $u_{ij}\in U_i\otimes R$ (for $i=1,\dots,m$ and
$j=1,\dots,d$) such that
$$\phi_R^*(f)\>=\>c+\sum_{i=1}^m\sum_{j=1}^du_{ij}^2$$
holds in $\R[X]\otimes R=R[X]$, for some $0\le c\in R$.
Moreover there exists $b\in X(R)$ with $\phi(b)=a$, and we conclude
$$f^\otimes(a)\>=\>(\phi_R^*f)^\otimes(b)\>=\>1\otimes c+
\sum_{i=1}^m\sum_{j=1}^du_{ij}^\otimes(b)^2.$$
Since $\dim(U_i)\le d$ we have $\rk(u_{ij}^\otimes(b))\le d$ for all
$i,\,j$, which proves the implication (i) $\To$~(ii).
\end{lab}

\begin{lab}
The implication (ii) $\To$ (iii) in Theorem \ref{sxdegsosx} is
trivial. To
prove the converse, assume that (iii) holds. Let $R\supset\R$ be real
closed, let $f\in P_R$ and $a\in K_R$. There are $f_1,\dots,f_r\in
E_R$ (with $r=n+1$, if we want) and $0\le t_1,\dots,t_r\in R$ with
$f=\sum_{j=1}^rt_jf_j$. So
$$f^\otimes(b)\>=\>\sum_{j=1}^s(1\otimes t_j)\cdot f_j^\otimes(b)$$
for every $b\in R^n$. On the other hand, there are $a_1,\dots,a_m\in
S_R$ (again with $m=n+1$) and $0\le s_1,\dots,s_m\in R$ with
$\sum_{i=1}^ms_i=1$ and $a=\sum_{i=1}^ms_ia_i$. Therefore
$$g^\otimes(a)\>=\>\sum_{i=1}^m(s_i\otimes1)\cdot g^\otimes(a_i)$$
for every linear polynomial $g\in R[x]$ (see \ref{tenseval}).
Altogether
$$f^\otimes(a)\>=\>\sum_{i,j}(s_i\otimes t_j)\cdot f_j^\otimes(a_i)$$
which shows $\sosx f^\otimes(a)\le d$ by assumption~(iii) and Lemma
\ref{sosxobv}.
\end{lab}

\begin{lab}\label{assumeii}%
The proof of the remaining implication (ii) $\To$ (i) in Theorem
\ref{sxdegsosx} requires several steps. For Lemmas \ref{step1} to
\ref{completeproof} below let $K\subset\R^n$ be a convex \sa\ set,
write $P=P_K$, and assume that $\sosx f^\otimes(a)\le d$ holds for
every real closed field $R\supset\R$, every $a\in K_R$ and every
$f\in P_R$ (with $d\ge1$).
\end{lab}

\begin{lem}\label{step1}%
(Assumptions as in \ref{assumeii})
Given $R\supset\R$, a point $a\in K_R$ and a linear polynomial
$f\in P_R$, there exists a morphism $\phi\colon X\to\A^n$ of affine
$\R$-varieties, together with linear subspaces $U_1,\dots,U_m\subset
\R[X]$ of dimension $\le d$, such that $a\in\phi(X(R))$ and
$$\phi_R^*(f)\>\in\>\Sigma(U_1\otimes R)^2+\cdots+
\Sigma(U_m\otimes R)^2.$$
\end{lem}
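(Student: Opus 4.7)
My plan is to construct $X$ as the Zariski closure (over $\R$) of the specific $R$-point that witnesses the decomposition provided by the hypothesis. Apply $\sosx f^\otimes(a)\le d$ to fix a representation
$$f^\otimes(a)\>=\>1\otimes c+\sum_{i=1}^N\theta_i^2$$
in $R\otimes R$ with $0\le c\in R$ and $\theta_i=\sum_{j=1}^d\alpha_{ij}\otimes\beta_{ij}$ for some $\alpha_{ij},\,\beta_{ij}\in R$. Introduce fresh indeterminates $y_{ij}$ ($1\le i\le N$, $1\le j\le d$), and let $J\subset\R[x,y]$ be the vanishing ideal over $\R$ of the point $(a,\alpha)\in R^{n+Nd}$, so that $\R[x,y]/J$ is identified with the $\R$-subalgebra $\R[a,\alpha]\subset R$. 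Set $X:=\Spec(\R[x,y]/J)$, and let $\phi\colon X\to\A^n$ be the projection to the $x$-coordinates; the tautological inclusion $\R[X]\hookrightarrow R$ produces $b\in X(R)$ with $\phi(b)=a$, giving $a\in\phi(X(R))$.

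The main task is to establish the polynomial identity
$$\phi_R^*(f)\>=\>c+\sum_{i=1}^N\Big(\sum_{j=1}^d\beta_{ij}\bar y_{ij}\Big)^2$$
in $R[X]=\R[X]\otimes_\R R$, where $\bar y_{ij}$ is the image of $y_{ij}$. Writing $F\in R[x,y]$ for the difference of the two sides, apply the $R$-algebra homomorphism $\Xi\colon R[x,y]\to R\otimes R$ determined by $x_l\mapsto a_l\otimes1$, $y_{ij}\mapsto\alpha_{ij}\otimes1$, and $r\mapsto1\otimes r$ for $r\in R$. A direct unpacking shows $\Xi(F)=0$: indeed $\Xi(f)=f^\otimes(a)$ by the definition of tensor evaluation, and $\Xi$ turns the candidate sum of squares into exactly $1\otimes c+\sum_i\theta_i^2$. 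Now $\Xi$ factors as
$$R[x,y]\>=\>\R[x,y]\otimes_\R R\>\twoheadrightarrow\>(\R[x,y]/J)\otimes_\R R\>\hookrightarrow\>R\otimes_\R R,$$
and the second arrow (induced by $\R[x,y]/J\hookrightarrow R$) is injective because $R$ is a flat (indeed free) $\R$-module. Hence $\ker\Xi=J\cdot R[x,y]$ and $F$ vanishes in $R[X]$, which yields the identity.

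To conclude I take $U_0:=\R\cdot1\subset\R[X]$ and $U_i:=\spn_\R(\bar y_{i,1},\dots,\bar y_{i,d})\subset\R[X]$ for $i=1,\dots,N$; each has $\R$-dimension $\le d$. Since $c\ge0$ in the real closed field $R$, we get $c\cdot1\in\Sigma(U_0\otimes R)^2$, while $\sum_j\beta_{ij}\bar y_{ij}\in U_i\otimes R$ forces $\bigl(\sum_j\beta_{ij}\bar y_{ij}\bigr)^2\in\Sigma(U_i\otimes R)^2$, giving $\phi_R^*(f)\in\sum_{i=0}^N\Sigma(U_i\otimes R)^2$, as required (with $m=N+1$). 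I expect the main technical point to be the identification $\ker\Xi=J\cdot R[x,y]$, which rests on flatness of $R$ over $\R$; this is precisely what allows the hypothesized decomposition, a priori only an equality in $R\otimes R$, to descend to a genuine polynomial identity on the $\R$-variety $X$.
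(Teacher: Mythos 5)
Your construction of $X$ as $\Spec(\R[x,y]/J)$, with $J$ the vanishing ideal of $(a,\alpha)$, produces exactly the same variety as the paper's — namely $\Spec(A)$ for $A$ the $\R$-subalgebra of $R$ generated by $a_1,\dots,a_n$ and the entries $\alpha_{ij}$ of the rank-$\le d$ tensors — and the crux (descending the identity from $R\otimes R$ to $\R[X]\otimes R$ via injectivity/flatness) is identical. Your write-up just makes explicit, via the ideal $J$ and the map $\Xi$, what the paper leaves implicit in the notation $i\otimes1\colon A\otimes R\subset R\otimes R$.
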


\begin{proof}
Let $a=(a_1,\dots,a_n)$.
By definition of $\sosx f^\otimes(a)$, there exist finitely many
linear $\R$-subspaces $U_i\subset R$ with $\dim(U_i)\le d$
($i=1,\dots,m$) such that $f^\otimes(a)\in\Sigma(U_1\otimes R)^2+
\cdots+\Sigma(U_m\otimes R)^2$ in $R\otimes R$. Let $A$ be the
$\R$-subalgebra of $R$ that is (finitely) generated by
$a_1,\dots,a_n\in R$ and by $U_1+\cdots+U_m\subset R$, and let
$\varphi\colon\R[x_1,\dots,x_n]\to A$ be the homomorphism of
$\R$-algebras defined by $x_i\mapsto a_i$ ($i=1,\dots,n$). Let
$X=\Spec(A)$, let $\phi=\varphi^*\colon X\to\A^n$ be the morphism
of $\R$-varieties defined by $\varphi$. The $U_i$ are $\R$-linear
subspaces of $\R[X]=A$ with $\dim(U_i)\le d$. Moreover $a$ lies in
$\phi(X(R))$, corresponding to the inclusion homomorphism
$i\colon A\subset R$.
Under the inclusion $i\otimes1\colon A\otimes R\subset R\otimes R$,
the element $\phi_R^*(f)\in A\otimes R$ is mapped to $f^\otimes(a)$.
Therefore $\phi_R^*(f)$ has a representation of the desired form.
\end{proof}

\begin{lem}\label{step2}%
(Assumptions as in \ref{assumeii})
Given $R\supset\R$ and $f\in P_R$, there is a morphism
$\phi\colon X\to\A^n$ of affine $\R$-varieties with
$K\subset\phi(X(\R))$, and there are $\R$-linear subspaces
$U_i\subset\R[X]$ with $\dim(U_i)\le d$ ($i=1,\dots,m$), such that
$$\phi_R^*(f)\>\in\>\Sigma(U_1\otimes R)^2+\cdots+
\Sigma(U_m\otimes R)^2.$$
\end{lem}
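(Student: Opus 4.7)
The plan is to apply Lemma \ref{step1} at a single generic point $\xi$ in a real closed extension $F \supset R$, chosen so that $\xi$ is infinitesimally close to an interior point $a_0$ of $K$, and then to descend both the sum-of-squares decomposition (from $F$ back to $R$) and the surjectivity (from $\xi \in \phi(X(F))$ to $K \subset \phi(X(\R))$) by a combination of Tarski transfer and a finite covering. I would first reduce to the case that $K$ has nonempty interior in $\R^n$; otherwise one replaces $\R^n$ by the affine hull of $K$. Fix an interior point $a_0$ of $K$, and let $F$ denote the real closure of $R(t_1,\dots,t_n)$ ordered so that each $t_i > 0$ is infinitesimal over $R$. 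Setting $\xi := (a_0^{(1)} + t_1,\dots,a_0^{(n)} + t_n) \in F^n$, the infinitesimal perturbation lies in $K_F$ because $a_0$ is an interior point, and $f$ extends to $f_F \in P_F$ by base change. Lemma \ref{step1}, applied to $(F,\xi,f_F)$ (with the standing assumption of \ref{assumeii} supplying $\sosx f_F^\otimes(\xi) \le d$), yields a morphism $\phi\colon X \to \A^n$ of affine $\R$-varieties, subspaces $U_1,\dots,U_m \subset \R[X]$ of dimension $\le d$, and a decomposition
$$\phi_F^*(f) \>\in\> \Sigma(U_1 \otimes F)^2 + \cdots + \Sigma(U_m \otimes F)^2 \quad \text{in } F[X].$$

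To descend the decomposition to $R$: the cone $\sum_i \Sigma(U_i \otimes R)^2$ sits inside the finite-dimensional $R$-space $R \otimes \sum_i U_iU_i$ and has $\sxdeg \le d$ by Lemma \ref{sxsoscone}, hence is cut out by finitely many LMIs over $R$. Since $\phi_R^*(f)$ already lies in $R[X]$ and its image in $F[X]$ belongs to the analogous cone over $F$, Tarski transfer gives $\phi_R^*(f) \in \sum_i \Sigma(U_i \otimes R)^2$.

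The remaining and delicate issue is arranging $K \subset \phi(X(\R))$, since Lemma \ref{step1} only guarantees the single $F$-point $\xi \in \phi(X(F))$. My plan is to iterate the construction at several interior points $a_0^{(1)},\dots,a_0^{(L)}$, yielding varieties $X_1,\dots,X_L$ with morphisms $\phi_\ell$ and subspaces $U_{\ell,i}$, and then to take $X := X_1 \sqcup \cdots \sqcup X_L$ with $\phi := \phi_1 \sqcup \cdots \sqcup \phi_L$, letting each $U_{\ell,i}$ sit inside $\R[X] = \prod_\ell \R[X_\ell]$ by extension by zero on the other components; this preserves the dimension bound and patches the sum-of-squares decomposition block-diagonally. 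For the covering one argues that each real image $\phi_\ell(X_\ell(\R))$ contains a semialgebraic neighborhood of $a_0^{(\ell)}$ in $K$ (via a semialgebraic continuity argument based on the infinitesimal closeness of $\xi_\ell$ to $a_0^{(\ell)}$), and that finitely many such neighborhoods cover $K$ (using semialgebraic triviality, or a finite stratification of $K$ compatible with its face structure). I expect this covering step to be the main obstacle: one must uniformly show that the real fibers of the step-1 construction are nonempty over an entire neighborhood of $a_0^{(\ell)}$, and handle the possible unboundedness of $K$ — likely by exploiting its recession cone separately so that the finite collection of interior points can be chosen to control both the bounded part and the directions at infinity.
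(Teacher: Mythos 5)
Your descent step is correct and mirrors the paper: Lemma \ref{step1} is applied over an extension $F\supset R$, and since $\phi_R^*(f)$ already lies in the finite-dimensional $R$-span $\sum_i U_iU_i\otimes R$, inside which $\sum_i\Sigma(U_i\otimes R)^2$ is a semialgebraic cone defined over $\R$, Tarski transfer brings the membership from $F$ down to $R$.

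The covering step has a genuine gap that your proposed fixes do not close. Write $S:=\phi_\ell(X_\ell(\R))\subset\R^n$; Lemma \ref{step1} only guarantees that the single $F$-point $\xi_\ell$ lies in $S_F$, which amounts to a single point of the real spectrum of $\R[x_1,\dots,x_n]$ lying in the constructible set attached to $S$, and this does \emph{not} force $S$ to contain any neighborhood of $a_0^{(\ell)}$: already $S=\{x_1>0\}$, $a_0^{(\ell)}=0$ and $\xi_\ell=(t_1,\dots,t_n)$ with $t_i>0$ infinitesimal is a counterexample. Moreover, even if each $\phi_\ell(X_\ell(\R))$ did contain such a neighborhood, finitely many neighborhoods around interior $\R$-points could not cover the boundary or the unbounded directions of $K$. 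The paper resolves the covering differently: it applies Lemma \ref{step1} to \emph{every} pair $(R',a)$ with $R'\supset R$ real closed and $a\in K_{R'}$; the constructible subsets of the real spectrum attached to the resulting sets $\phi(X(\R))$ then cover the constructible set corresponding to $K$, and compactness of the constructible topology (\cite[7.1.12]{bcr}) extracts the finite family $\phi_1,\dots,\phi_N$. The finite covering must come out of this abstract compactness over all points of all real closed extensions, not out of perturbing a hand-picked finite list of interior $\R$-points.
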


\begin{proof}
For every real closed field $R'\supset R$ and every $a\in K_{R'}$,
Lemma \ref{step1} has shown that there exists an $\R$-morphism
$\phi\colon X\to\A^n$ with $a\in\phi(X(R'))$, together with
$\R$-subspaces $U_j\subset\R[X]$ satisfying $\dim(U_j)\le d$ and
$\phi_R^*(f)\in\sum_j\Sigma(U_j\otimes R)^2$. For each such $\phi$,
the image set $\phi(X(\R))$ is a semialgebraic subset of $\R^n$.
By compactness of the constructible topology of the real spectrum
(e.g.\ \cite[7.1.12]{bcr}),
this implies that there exist finitely many $\R$-morphisms
$\phi_i\colon X_i\to\A^n$ ($i=1,\dots,N$) such that
$K\subset\bigcup_{i=1}^N
\phi_i(X_i(\R))$, and for every $i=1,\dots,N$ finitely many
$\R$-subspaces $U_{ij}\subset\R[X_i]$ ($j=1,\dots,m_i$) with
$\dim(U_{ij})\le d$, such that for each $i=1,\dots,N$ we have
\begin{equation}\label{ithsumrep}%
\phi_{iR}^*(f)\>\in\>\Sigma(U_{i1}\otimes R)^2+\cdots+
\Sigma(U_{im_i}\otimes R)^2.
\end{equation}
From $\phi_1,\dots,\phi_N$ we can fabricate a single $\phi$, as
follows. Let $X:=\coprod_{i=1}^NX_i$ (disjoint sum),
and let $V_1,\dots,V_t\subset\R[X]$ be the $\R$-subspaces
$$\{0\}\times\cdots\times U_{ij}\times\cdots\times\{0\}\>\subset\>
\R[X]\>=\>\R[X_1]\times\cdots\times\R[X_N]$$
for $1\le i\le N$ and $1\le j\le m_i$, where $U_{ij}$ stands at
position $i$ in the direct product. Then $\dim(V_\nu)\le d$ for
all~$\nu$. If $\phi\colon X\to\A^n$ denotes the morphism which
restricts to $\phi_i$ on $X_i$, we clearly have
$K\subset\phi(X(\R))$. Moreover the element $\phi_R^*(f)=
(\phi_{1R}^*(f),\dots,\phi_{NR}^*(f))\in R[X]$ lies in
$$\Sigma(V_1\otimes R)^2+\cdots+\Sigma(V_t\otimes R)^2.$$
Indeed, this is clear by writing $\phi_R^*(f)$ as
$$\bigl(\phi_{1R}^*(f),0,\dots,0\bigr)+
\bigl(0,\phi_{2R}^*(f),0,\dots,0\bigr)+\cdots+
\bigl(0,\dots,0,\phi_{NR}^*(f)\bigr)$$
and using \eqref{ithsumrep} for $i=1,\dots,N$.
\end{proof}

\begin{lem}\label{step3}%
(Assumptions as in \ref{assumeii})
There is a morphism $\phi\colon X\to\A^n$ of affine $\R$-varieties,
together with $\R$-linear subspaces $U_1,\dots,U_m\subset\R[X]$ with
$\dim(U_i)\le d$, such that $K\subset\phi(X(\R))$ and
$\phi^*(P)\subset(\Sigma U_1^2)+\cdots+(\Sigma U_m^2)$.
\end{lem}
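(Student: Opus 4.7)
The plan is to remove the $f$-dependence in Lemma~\ref{step2} by a compactness argument on the space of affine-linear polynomials, and then to combine the resulting finite collection of morphisms into a single one via a fibre product over $\A^n$.

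I would first identify $L = \spn(1, x_1, \dots, x_n)$ with $\R^{n+1}$ via coefficients, so that $P$ becomes a closed convex semialgebraic cone in $\R^{n+1}$. For each quadruple $\sigma = (X, \phi, U_1, \dots, U_m)$ where $\phi \colon X \to \A^n$ is an $\R$-morphism with $K \subset \phi(X(\R))$ and each $U_i \subset \R[X]$ is a linear subspace of dimension $\le d$, set
$$Q_\sigma \>:=\> \bigl\{f \in L \colon \phi^*(f) \in (\Sigma U_1^2) + \cdots + (\Sigma U_m^2)\bigr\}.$$
Because the cone $(\Sigma U_1^2) + \cdots + (\Sigma U_m^2)$ lies in the finite-dimensional subspace $\sum_i U_i U_i$ of $\R[X]$ and is semialgebraic there (Lemma~\ref{sxsoscone}), $Q_\sigma$ is a closed semialgebraic convex cone in $L$.

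I would then apply Lemma~\ref{step2} uniformly: for every real closed $R \supset \R$ and every $f \in P_R$, there is some $\sigma$ with $f$ lying in the base extension $Q_{\sigma, R}$. Passing to the real spectrum of $\R[y_0, \dots, y_n]$, every point of the closed constructible subset $\wt P$ corresponding to $P$ belongs to some $\wt{Q_\sigma}$. By compactness of the constructible topology (as used in Lemma~\ref{step2} and \cite[7.1.12]{bcr}), finitely many $\sigma_1, \dots, \sigma_N$ suffice: $P = Q_{\sigma_1} \cup \cdots \cup Q_{\sigma_N}$, where each $\sigma_j = (X_j, \phi_j, U_{j,1}, \dots, U_{j,m_j})$ satisfies $K \subset \phi_j(X_j(\R))$.

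Finally I would assemble the $\sigma_j$ through the fibre product $X := X_1 \times_{\A^n} \cdots \times_{\A^n} X_N$, with structure morphism $\phi \colon X \to \A^n$ and projections $\pr_j \colon X \to X_j$ satisfying $\phi = \phi_j \comp \pr_j$. For any $a \in K$, choosing $b_j \in X_j(\R)$ with $\phi_j(b_j) = a$ and forming $(b_1, \dots, b_N) \in X(\R)$ shows $K \subset \phi(X(\R))$. Setting $V_{j,i} := \pr_j^*(U_{j,i}) \subset \R[X]$ yields subspaces of dimension $\le d$, and for any $f \in P$ with $f \in Q_{\sigma_{j_0}}$ the relation $\phi_{j_0}^*(f) \in (\Sigma U_{j_0,1}^2)+\cdots+(\Sigma U_{j_0,m_{j_0}}^2)$ pulls back along $\pr_{j_0}^*$ to a decomposition of $\phi^*(f)$ in terms of the $V_{j_0,i}$, and hence in terms of all the $V_{j,i}$. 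The crucial difference from the proof of Lemma~\ref{step2} is that a disjoint union is no longer enough: in $\coprod X_j$, an SOS decomposition of a single $\phi_{j_0}^*(f)$ would leave the other components $\phi_k^*(f)$ uncontrolled, while the fibre product collapses everything onto $\phi$ so that one local decomposition lifts to a global one. The main obstacle will be the compactness step — specifically, verifying that $\{\wt{Q_\sigma}\}$ covers all of $\wt P$ rather than merely its $\R$-points — which is precisely what Lemma~\ref{step2}, applied at every real closed extension of $\R$, provides.
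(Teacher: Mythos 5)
Your proposal is correct and follows essentially the same route as the paper: Lemma \ref{step2} supplies, for each point of the constructible set over $P$, a quadruple $\sigma$, compactness of the constructible topology extracts a finite subcover, and the fibre product $X_1\times_{\A^n}\cdots\times_{\A^n}X_N$ with the pulled-back subspaces $\pr_j^*(U_{j,i})$ (which are exactly the paper's $\R1\otimes\cdots\otimes U_{ij}\otimes\cdots\otimes\R1$) assembles the pieces. Your closing remark explaining why the disjoint sum of Lemma \ref{step2} must be replaced by a fibre product here is accurate and matches the reasoning implicit in the paper.
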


\begin{proof}
By Lemma \ref{step2} there exists, for every $R\supset\R$ and every
$f\in P_R$, a morphism $\phi\colon X\to\A^n$ of affine $\R$-varieties
with $K\subset\phi(X(\R))$, together with $\R$-subspaces
$U_j\subset\R[X]$ with $\dim(U_j)\le d$ ($j=1,\dots,m$), such that
$(\phi_R^*)(f)\in\sum_j\Sigma(U_j\otimes R)^2$. For each such $\phi$,
the subset
$$\{g\in P\colon\phi^*(g)\in\Sigma U_1^2+\cdots+\Sigma U_m^2\}$$
of $P$ is \sa. Again using compactness of the constructible topology,
we conclude
that there exist finitely many
$\phi_i\colon X_i\to\A^n$ ($i=1,\dots,N$), each satisfying
$K\subset\phi_i(X_i(\R))$, and for each index $i$ there exist
finitely many $\R$-subspaces $U_{ij}\subset\R[X_i]$ ($j=1,\dots,m_i$)
of dimension $\dim(U_{ij})\le d$, such that the following is true:
For every $f\in P$ there exists an index $i\in\{1,\dots,N\}$ with
\begin{equation}\label{phiistarrep}%
\phi_i^*(f)\>\in\>\Sigma U_{i1}^2+\cdots+\Sigma U_{im_i}^2.
\end{equation}
Again we construct a single $\phi$ from $\phi_1,\dots,\phi_N$: Let
$X:=X_1\times_{\A^n}\cdots\times_{\A^n}X_N$ (fibre product over
$\A^n$ via the morphisms $\phi_i\colon X_i\to\A^n$), so $\R[X]$ is
the tensor product $\R[X_1]\otimes_{\R[x]}\cdots\otimes_{\R[x]}
\R[X_N]$ via the homomorphisms $\phi_i^*\colon\R[x]\to\R[X_i]$. The
natural morphism $\phi\colon X\to\A^n$ satisfies
$K\subset\phi(X(\R))$,
and for $f\in\R[x]$ we have
\begin{equation}\label{fstartensprod}%
\phi^*(f)\>=\>\phi_1^*(f)\otimes1\otimes\cdots\otimes1\>=\>
\cdots\>=\>1\otimes\cdots\otimes1\otimes\phi_N^*(f)
\end{equation}
in $\R[X]$. Let $V_1,\dots,V_t\subset\R[X]$ be the subspaces
$$\R1\otimes\cdots\otimes U_{ij}\otimes\cdots\otimes\R1$$
for $1\le i\le N$ and $1\le j\le m_i$, where $U_{ij}$ stands at
position $i$ in the tensor product. Then $\dim(V_\nu)\le d$ for
each~$\nu$. Given $f\in P$, let $1\le i\le N$ be an index with
\eqref{phiistarrep}. Then from \eqref{fstartensprod} we see that
$$\phi^*(f)\>\in\>\Sigma V_1^2+\cdots+\Sigma V_t^2.$$
Altogether this shows that $\phi^*(P)$ is contained in the right
hand cone, which proves the lemma.
\end{proof}

\begin{lab}\label{completeproof}%
\emph{Proof of (ii) $\To$ (i) in Theorem \ref{sxdegsosx}}.
Let $K\subset\R^n$ be closed convex and \sa, and assume~(ii) (see
\ref{assumeii}). Then Lemma \ref{step3} says that
$\sosdeg(K)\le d$. Combining this with Theorem \ref{sxdegsosdeg} we
conclude that $\sxdeg(K)\le d$ since $K$ is closed. This completes
the proof of Theorem \ref{sxdegsosx}.
\qed
\end{lab}

We record an obvious relaxation of Theorem \ref{sxdegsosx}:

\begin{cor}\label{spshadiffpsdsos}%
Let $K\subset\R^n$ be a closed convex \sa\ set. Then $K$ is a
spectrahedral shadow if and only if $f^\otimes(a)$ is a sum of
squares in $R\otimes R$, for every real closed field $R\supset\R$,
every $f\in(P_K)_R$ and every $a\in K_R$.
\end{cor}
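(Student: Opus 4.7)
The plan is to read the corollary off from Theorem~\ref{sxdegsosx}, using that $K$ is a spectrahedral shadow iff $\sxdeg(K)<\infty$ (Remark~\ref{sxdegbasic}.2).

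The forward direction is immediate: if $\sxdeg(K)=d<\infty$, then Theorem~\ref{sxdegsosx}~(i)$\To$(ii) gives $\sosx f^\otimes(a)\le d$ for every real closed $R\supset\R$, every $f\in(P_K)_R$ and every $a\in K_R$, so in particular $f^\otimes(a)$ is a sum of squares in $R\otimes R$.

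For the converse, assume every $f^\otimes(a)$ is SOS, equivalently $\sosx f^\otimes(a)<\infty$ for each triple $(R,f,a)$. I would replay the proof of (ii)$\To$(i) in Theorem~\ref{sxdegsosx}, that is, Lemmas~\ref{step1}--\ref{step3}, with the uniform assumption ``$\sosx\le d$'' replaced by the pointwise assumption $\sosx<\infty$. Lemma~\ref{step1} still produces, for each triple, a morphism $\phi\colon X\to\A^n$ and $\R$-subspaces $U_j\subset\R[X]$ of some finite dimension (now possibly depending on $R$, $f$, $a$). The real-spectrum compactness arguments in Lemmas~\ref{step2} and~\ref{step3} reduce these infinitely many pointwise decompositions to finitely many morphisms $\phi_i$ and finitely many subspaces $U_{ij}$.

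The crucial point is that this finite reduction automatically yields a \emph{uniform} dimension bound: with only finitely many finite-dimensional $U_{ij}$ in play, we may set $d:=\max_{i,j}\dim(U_{ij})<\infty$. The conclusion of Lemma~\ref{step3} then gives a single $\phi\colon X\to\A^n$ with $K\subset\phi(X(\R))$ and $\phi^*(P_K)\subset\Sigma U_1^2+\cdots+\Sigma U_m^2$ with $\dim(U_i)\le d$, so $\sosdeg(K)\le d$. Applying Theorem~\ref{sxdegsosdeg} (using that $K$ is closed) yields $\sxdeg(K)\le d<\infty$, i.e.\ $K$ is a spectrahedral shadow. The only subtlety is this promotion of pointwise finiteness to a uniform bound, which is supplied for free by the constructible-topology compactness already invoked inside the proof of the theorem.
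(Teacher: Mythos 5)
Your proposal is correct and matches the paper's own proof essentially word for word: the paper likewise handles ``only if'' by citing the implication (i)~$\To$~(ii) of Theorem~\ref{sxdegsosx}, and for ``if'' it instructs the reader to replay the proof of (ii)~$\To$~(i) (Lemmas~\ref{step1}--\ref{step3}) under the weaker hypothesis $\sosx f^\otimes(a)<\infty$, letting the constructible-topology compactness steps produce finitely many finite-dimensional subspaces and hence a uniform $d$, then applying Theorem~\ref{sxdegsosdeg} to the closed set $K$. Your remark making explicit that compactness is what promotes pointwise finiteness to a uniform bound is exactly the (unstated) content of the paper's ``following the proof, one sees\dots''.
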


\begin{proof}
For the ``if'' direction, assume that $f^\otimes(a)$ is a sum of
squares for all choices of $R,\,f$ and~$a$. Following the proof of
Theorem \ref{sxdegsosx}, (ii) $\To$ (i) (see \ref{assumeii}), one
sees that there exists a morphism $\phi\colon X\to\A^n$ together with
a linear subspace $U\subset\R[X]$ of finite dimension such that
$K\subset\phi(X(\R))$ and $\phi^*(P)\subset\Sigma U^2$. By Theorem
\ref{sxdegsosdeg}, this implies $\sxdeg(K)=\sosdeg(K)\le\dim(U)<
\infty$. The ``only if'' direction is obvious from Theorem
\ref{sxdegsosx}.
\end{proof}

Our proof of Theorem \ref{mainthm} depends on Theorem \ref{sxdegsosx}
in an essential way. The next section will provide the necessary
algebraic background.


\section{Tensor decomposition}\label{maintech}%

\begin{lab}\label{samwoteknikl}%
The setup in this section is somewhat technical. Before we go into
the details, we give an informal outline.

Let $K\subset\R^2$ be a closed convex \sa\ set, let $P=P_K$, the
cone of linear functions nonnegative on $K$. To prove
$\sxdeg(K)\le2$, we have to show (by Theorem \ref{sxdegsosx}) that
$\sosx f^\otimes(a)\le2$ for every $a\in K_R$ and $f\in P_R$, where
$R\supset\R$ is a real closed field. To describe the essential case,
fix an irreducible plane algebraic curve $C\subset\A^2$ over $\R$.
Take two arbitrary $R$-rational points $a\ne b$ on $C$, and let
$f=\tau_b$ be the equation of the tangent to $C$ at $b$ (we assume
that $b$ is a nonsingular $R$-point). When $\tau_b(a)>0$, we need to
show $\sosx\tau_b^\otimes(a)\le2$. This in turn will follow from
Theorem \ref{txyidentaet}, which is the main result of this section.
See Section \ref{mainpf} for a rigorous proof of the main result
Theorem \ref{mainthm} from this theorem.

From a (reduced) equation $F(x,y)=0$ for $C$ we get a uniform choice
for an equation $\tau_v$ of the tangent at nonsingular points $v$
of~$C$.
This gives a regular function $T\colon(u,v)\mapsto\tau_v(u)$ on
$C\times C$, i.e.\ an element $T\in\R[C]\otimes\R[C]$. If
$a,\,b\in C(R)$ are nonsingular $R$-points, the tensor evaluation
$\tau_b^\otimes(a)\in R\otimes R$ is the image of $T$ under the map
$\R[C]\otimes\R[C]\to R\otimes R$, $p\otimes q\mapsto p(a)\otimes
q(b)$. Roughly, Theorem \ref{txyidentaet} establishes a
decomposition of $T$ in (a~localization of) $\R[X]\otimes\R[X]$
where $X\to C$ is the normalization of~$C$. When read in
$R\otimes R$, this decomposition yields the desired conclusion
$\sosx\tau_b^\otimes(a)\le2$.

In this section we work with a plane curve $C$ over $\R$ and with
its normalization.
Throughout we could work over an arbitrary base field $k$ of
characteristic zero, except that this would require a slightly
different formulation of Theorem \ref{txyidentaet}.
Since we have no need for this greater generality, we stick to
$k=\R$.
\end{lab}

\begin{lab}\label{setup1}%
We now present the details. Let $C\subset\A^2$ be an irreducible
(reduced) curve over~$\R$, and let $\pi\colon X\to C$ be its
normalization. Let $\xi\in X(\R)$ be a point,
fixed for the entire discussion, and let $\eta=\pi(\xi)\in C(\R)$.
Let $X_0\subset X$ be an (affine) open neighborhood of $\xi$ that we
will shrink further according to our needs,  and write $A=\R[X_0]$.
Always consider $A\otimes A=A\otimes_\R A$ as an $A$-algebra via the
\emph{second}
embedding $i_2\colon A\to A\otimes A$, $a\mapsto1\otimes a$. So for
$f\in A$ and $\theta\in A\otimes A$, the notation $f\theta$ means
$(1\otimes f)\cdot\theta$. Let $\text{mult}\colon A\otimes A\to A$ be
the product map, let $I$ be its kernel. For $f\in A$ the element
$\delta(f):=f\otimes1-1\otimes f$ lies in~$I$.

We choose $X_0$ so small that the $A$-module $\Omega=\Omega_{A/\R}$
of K\"ahler differentials is freely generated by $ds$, for some
$s\in A$. For $f\in A$ define $\frac{df}{ds}\in A$ by
$df=\frac{df}{ds}ds$, as usual, and let inductively
$\frac{d^if}{ds^i}=\frac d{ds}\bigl(\frac{d^{i-1}f}{ds^{i-1}}\bigr)$
for $i\ge1$. The isomorphism $\Omega\isoto I/I^2$,
$df\mapsto\delta(f)+I^2$ of $A$-modules induces $A$-linear
isomorphisms $\Sym^d_A(\Omega)\to I^d/I^{d+1}$ for all $d\ge0$
(\cite{ega4} 17.12.4, 16.9.4).
Hence, for any $f\in A$, there are unique elements $p_i\in A$
($i\ge0$) such that for every $d\ge0$ the congruence
$$f\otimes1\>\equiv\>\sum_{i=0}^d\frac{p_i}{i!}\delta(s)^i\
(\text{mod }I^{d+1})$$
holds in $A\otimes A$,
and we have $p_i=\frac{d^if}{ds^i}$ for all $i\ge0$.
Hence the congruence
\begin{equation}\label{generictaylor}
\delta(f)\>\equiv\>\sum_{i=1}^d\frac1{i!}\frac{d^if}{ds^i}\,
\delta(s)^i\ (\text{mod }I^{d+1})
\end{equation}
holds in $A\otimes A$ for every $f\in A$ and every $d\ge1$.
(For $A=\R[x]$, this is just the general Taylor expansion
$f(x)=\sum_{i\ge0}\frac1{i!}f^{(i)}(y)\,(x-y)^i$ of $f\in\R[x]$. The
general case can be reduced to this one by localization.)
\end{lab}

\begin{lab}\label{setup3}%
Via $\pi$ we consider the affine coordinates $x,\,y$ of $\A^2$ as
elements of $A$. Assume that $C$ is not a line, i.e.\ that $1,x,y$
are $\R$-linearly independent in~$A$.
Let $val_\xi\colon\R(X)^*\to\Z$ be the discrete (Krull) valuation of
the function field $\R(X)$ that is centered at~$\xi$. Since $val_\xi$
has residue field $\R$, there are (unique) integers
$1\le m_\xi<n_\xi$ such that $\{0,m_\xi,n_\xi\}=\{val_\xi(f)\colon
0\ne f\in\R+\R x+\R y\}$.
Note that $\eta$ is a nonsingular point of $C$ if and only if
$m_\xi=1$, and that $n_\xi=2$ holds if and only if $\eta$ is
nonsingular and the tangent at $\eta$ is simple.
\end{lab}

\begin{lab}\label{setup2}%
Recall that $s\in A$ is such that $ds$ is a free generator of
$\Omega_{A/\R}$. We call
$$T_s\>:=T_s(x,y)\>=\>\frac{dx}{ds}\cdot\delta(y)-\frac{dy}{ds}\cdot
\delta(x)\>\in\>\spn_\R(1,x,y)\otimes A\subset A\otimes A$$
the \emph{tangent tensor} of $C$ (relative to~$s$). Changing $s$
results in multiplying $T_s(x,y)$ with a unit of $A$.
Note that $T_s(x,y)$ is $\R$-bilinear in $x$ and $y$ and satisfies
$T_s(y,x)=-T_s(x,y)$ and $T_s(x,1)=0$.
Moreover $T_s(x,y)\in I^2$ since $\frac{dx}{ds}dy=\frac{dy}{ds}dx$
in $\Omega$.

To explain the terminology, note that if $\R\to E$ is a field
extension and $b\in X_0(E)=\Hom_\R(A,E)$ is such that $\pi(b)$ is a
nonsingular $E$-point of $C$, then the image of $T_s(x,y)$ under
$$1\otimes b\colon\spn_\R(1,x,y)\otimes A\to\spn_\R(1,x,y)\otimes E$$
is an equation for the tangent to the curve $C$ at the $E$-point
$\pi(b)$ of~$C$.
The main result of this section is:
\end{lab}

\begin{thm}\label{txyidentaet}%
Let $A_\xi=\scrO_{X,\xi}$, and consider the tangent tensor
$T=T_s(x,y)$ (\ref{setup2}) as an element of $A_\xi\otimes A_\xi$.
Let $(m,n)=(m_\xi,n_\xi)$ as in \ref{setup3}. Then, for any local
uniformizer $t\in A_\xi$, there is a choice of sign $\pm$ such that
\begin{equation}\label{txydecomp}%
\pm T\>=\>(1\otimes t^{m-1})\cdot\sum_{i=0}^{n-2}(t^i\otimes
t^{n-2-i})\cdot\bigl(\alpha_i\delta(u_1)^2+\beta_i\delta(u_2)^2\bigl)
\end{equation}
in $A_\xi\otimes A_\xi$, with elements $u_1,\,u_2\in A_\xi$ and
$\alpha_i,\,\beta_i\in A_\xi\otimes A_\xi$, such that $\ol\alpha_i$,
$\ol\beta_i>0$ in $\R$ for all~$i$.
\end{thm}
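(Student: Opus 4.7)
My plan is to express the tangent tensor as $T = w^{m-1}(u-w)^2 G$, where $u := t \otimes 1$ and $w := 1 \otimes t$, and then to decompose $G$ according to the monomial basis $u^j w^{n-2-j}$ of the ideal $(u,w)^{n-2}$. The theorem will then follow by taking $u_1 := t$ (so $\delta(u_1) = u-w$), $u_2 := 1$ (so $\delta(u_2) = 0$), and $\beta_j := 1$ (whose residue is trivially positive while the term itself vanishes). We may shrink $X_0$ to arrange $s = t$, so that $ds = dt$ generates $\Omega_{A_\xi/\R}$.

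After translating $\eta$ to the origin and applying a linear change of $\A^2$-coordinates (which scales $T$ by the nonzero determinant of the change, absorbed into the eventual choice of $\pm$), we may assume $val_\xi(x) = m$ and $val_\xi(y) = n$. In the completion $\wh{A_\xi} = \R[[t]]$, expand $x = a_m t^m + \cdots$ and $y = b_n t^n + \cdots$ with $a_m,\,b_n \ne 0$. Divisibility $w^{m-1} \mid T$ follows from $val_\xi(dx/dt) = m-1$ and $val_\xi(dy/dt) = n-1 \ge m-1$, which place both coefficients of $T$ (viewed via the second embedding) in $(w^{m-1})$. Divisibility $(u-w)^2 \mid T$ follows from $T \in I^2$ (see \ref{setup2}) together with the fact that $I$ is principal with generator $\delta(t) = u-w$ in the regular $2$-dimensional local ring at $(\xi,\xi)$. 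Coprimality of $w$ and $u-w$ combines these into $T = w^{m-1}(u-w)^2 G$.

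The core step is computing the leading part of $G$ modulo $(u,w)^{n-1}$. Writing $T = \sum_{k<l}(a_k b_l - a_l b_k)\,D(k,l)$ with
\begin{equation*}
D(k,l) := k w^{k-1}(u^l - w^l) - l w^{l-1}(u^k - w^k),
\end{equation*}
one checks (by evaluating $D(k,l)$ and $\partial_u D(k,l)$ at $u = w$) that $D(k,l) = w^{k-1}(u-w)^2 F_{k,l}(u,w)$ for an explicit polynomial $F_{k,l}$. Since $a_k = 0$ for $k < m$ and $b_l = 0$ for $l < n$, only pairs with $k \ge m$ and $l \ge n$ contribute to $G$, and the unique pair producing a degree-$(n-2)$ piece is $(k,l) = (m,n)$. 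A direct expansion yields
\begin{equation*}
F_{m,n}(u,w) = \sum_{j=0}^{n-2} c_j\, u^j w^{n-2-j}, \quad c_j = \begin{cases} (n-m)(j+1), & 0 \le j \le m-1,\\ m(n-1-j), & m-1 \le j \le n-2,\end{cases}
\end{equation*}
with the two cases agreeing at $j = m-1$. Since $n > m \ge 1$, every $c_j > 0$, so $G \equiv a_m b_n \sum_j c_j\, u^j w^{n-2-j} \pmod{(u,w)^{n-1}}$ has a leading form whose coefficients all share the sign of $a_m b_n$.

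Choose the sign $\pm$ with $\pm a_m b_n > 0$. Since $\pm G \in (u,w)^{n-2}$, we may write $\pm G = \sum_{j=0}^{n-2} u^j w^{n-2-j} \alpha_j$ (higher-order terms being absorbed into the $\alpha_j$), arranging $\ol\alpha_j = \pm a_m b_n c_j > 0$. Taking $u_1 := t$, $u_2 := 1$, and $\beta_j := 1$ then yields
\begin{equation*}
\pm T = w^{m-1}(u-w)^2(\pm G) = (1 \otimes t^{m-1})\sum_{j=0}^{n-2}(t^j \otimes t^{n-2-j})\bigl(\alpha_j\delta(u_1)^2 + \beta_j\delta(u_2)^2\bigr),
\end{equation*}
which is \eqref{txydecomp}. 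The main combinatorial obstacle is verifying that all $c_j$ in $F_{m,n}$ are strictly positive, with the strict inequality $n > m$ entering essentially; a secondary technical point is to check that $G$ and the $\alpha_j$ may be taken in $A_\xi \otimes A_\xi$ rather than only in its localization at $(\xi,\xi)$, which can be handled by the explicit description of $G$ via the $F_{k,l}$ or by faithful flatness of completion.
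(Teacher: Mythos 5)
Your proposed decomposition takes $u_2 := 1$, so that $\delta(u_2) = 0$ and the $\beta_j\delta(u_2)^2$ terms vanish identically. This means you are asserting that $T$ is divisible by $\delta(t)^2 = (u-w)^2$ \emph{in $A_\xi\otimes A_\xi$}, i.e.\ that $T = w^{m-1}(u-w)^2 G$ with $G\in A_\xi\otimes A_\xi$. That claim is false in general, and this is not a removable technicality — it is why the paper introduces a genuine second generator $u$ of the $\R$-algebra $A$ (Lemma~\ref{i2gen}), takes $u_2 = u$, and proves Lemma~\ref{i2gen2} that $I = \idl{\delta(t),\delta(u)}$ requires both generators. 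Your own divisibility argument only works in the localization $(A_\xi\otimes A_\xi)_{(\xi,\xi)}$, where $I$ becomes principal; but the tangent tensor identity must hold in $A_\xi\otimes A_\xi$ itself, since that is the ring from which the paper later pushes the decomposition into $B\otimes B$ via \eqref{homab}. Localizing at $(\xi,\xi)$ would introduce denominators that do not map to units of $B\otimes B$, and faithful flatness of the completion only controls ideals of the local ring, not of $A_\xi\otimes A_\xi$.

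Concretely, consider the circle $C\colon x^2+y^2=1$, $\xi = (1,0)$, $t=y$, so $(m,n)=(1,2)$. A short computation gives
\begin{equation*}
T \;=\; \frac{1}{2\,(1\otimes x)}\bigl(\delta(x)^2 + \delta(y)^2\bigr),
\end{equation*}
an identity exactly of the form of Corollary~\ref{txygenericase} with $u_1=x$, $u_2=y=t$ and $\ol\alpha=\ol\beta=\tfrac12>0$. But $T/\delta(t)^2 = T/\delta(y)^2$ is not in $A_\xi\otimes A_\xi$: one checks $\delta(x)/\delta(y) = -(y_1+y_2)/(x_1+x_2)$, which has a pole along the anti-diagonal $\{(P,Q)\colon x(P)=-x(Q),\ y(P)=y(Q)\}$, a subvariety of $X\times X$ dominant onto both factors and hence corresponding to a prime of $A_\xi\otimes A_\xi$. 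So $\delta(t)^2$ does not divide $T$ in $A_\xi\otimes A_\xi$, and the $\delta(u_2)^2$ summand is genuinely needed. Your leading-form computation (the coefficients $c_j$) is correct and matches Lemma~\ref{lemsurpris}, but the structural framework — pulling out an exact factor $(u-w)^2$ at the level of $A_\xi\otimes A_\xi$ — fails; the paper works instead modulo the ideal $M = \idl{\delta(t)^2,\delta(u)^2}\,J^{n-1}$ and carries the $\delta(u)^2$ error term all the way through.
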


Here, if $\alpha\in A_\xi\otimes A_\xi$, we denote by
$\ol\alpha\in\R$ the evaluation of $\alpha$ at
$(\xi,\xi)\in(X\times X)(\R)$. So $\ol\alpha$ is the image of
$\alpha$ under $A\otimes A\labelto{\text{mult}}A\labelto\xi\R$. The
essential point in Theorem \ref{txyidentaet} is that an identity
\eqref{txydecomp} can be chosen such that the residues $\ol\alpha_i$,
$\ol\beta_i$ are all strictly positive.

It is worthwile to isolate the generic situation $(m,n)=(1,2)$:

\begin{cor}\label{txygenericase}%
In Theorem \ref{txyidentaet} assume that $\eta=\pi(\xi)$ is a
nonsingular point of $C$ with simple tangent. Then there is an
identity
$$\pm T\>=\>\alpha\cdot\delta(u_1)^2+\beta\cdot\delta(u_2)^2$$
in $A_\xi\otimes A_\xi$ with $\ol\alpha$, $\ol\beta>0$ in $\R$.
\qed
\end{cor}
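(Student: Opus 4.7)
The plan is to deduce the corollary as a direct specialization of Theorem \ref{txyidentaet} at $(m, n) = (1, 2)$. First I would verify that the geometric hypothesis of the corollary translates exactly to this numerical condition. By the final observation of \ref{setup3}, $m_\xi = 1$ holds precisely when $\eta = \pi(\xi)$ is a nonsingular point of $C$, and given this, $n_\xi = 2$ holds precisely when the tangent at $\eta$ is simple. So the assumption ``nonsingular with simple tangent'' is literally the condition $(m_\xi, n_\xi) = (1, 2)$, and Theorem \ref{txyidentaet} applies with these values.

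Next I would substitute $m = 1$ and $n = 2$ into the decomposition
\[
\pm T = (1 \otimes t^{m-1}) \cdot \sum_{i=0}^{n-2} (t^i \otimes t^{n-2-i}) \bigl( \alpha_i \delta(u_1)^2 + \beta_i \delta(u_2)^2 \bigr).
\]
The prefactor $1 \otimes t^{m-1}$ becomes $1 \otimes 1 = 1$, and the index set $\{0, \ldots, n-2\}$ collapses to the single element $i = 0$, for which $t^0 \otimes t^{n-2-i} = 1 \otimes 1 = 1$. The identity therefore reduces to $\pm T = \alpha_0 \delta(u_1)^2 + \beta_0 \delta(u_2)^2$, and setting $\alpha := \alpha_0$, $\beta := \beta_0$ yields the stated expression. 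The positivity conditions $\ol\alpha, \ol\beta > 0$ are inherited verbatim from the corresponding conclusion in Theorem \ref{txyidentaet}.

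There is no genuine obstacle at this stage: the entire substance of the argument lives in Theorem \ref{txyidentaet}, and the corollary is merely its clean specialization to the generic geometric situation, which (as indicated in \ref{samwoteknikl}) is precisely the case that will be invoked in the proof of the main theorem when $b$ is a nonsingular $R$-rational point with simple tangent. The only thing worth noting is that the sign ambiguity $\pm$ is preserved through the specialization, since it is already present in the theorem.
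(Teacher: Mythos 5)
Your proof is correct and matches the paper's (implicit) argument exactly: the corollary is indeed nothing more than the specialization of Theorem~\ref{txyidentaet} to $(m,n)=(1,2)$, which the paper signals by attaching $\qed$ directly to the statement. Your verification that the geometric hypothesis translates to $(m_\xi,n_\xi)=(1,2)$ via \ref{setup3}, and the subsequent collapse of the prefactor and the sum, is precisely what is needed.
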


\begin{lab}
If Theorem \ref{txyidentaet} has been proved for one choice of
uniformizers $s,\,t$ at~$\xi$, then it holds for any choice.
We'll prove the identity for $s=t$ with $t$ chosen according to the
next lemma. This lemma allows us to assume that $A$ is generated by
two elements as an $\R$-algebra.
\end{lab}

\begin{lem}\label{i2gen}%
Let $X$ be a nonsingular affine curve over $\R$. Given any point
$\xi\in X(\R)$, there is an open affine neighborhood $U$ of $\xi$ on
$X$ such that there are $t,\,u,\,s\in\R[X]$ with $val_\xi(t)=1$ and
$\R[U]=\R[t,u]_s$.
\end{lem}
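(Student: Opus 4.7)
The plan is to deduce the lemma from the local structure theorem for étale morphisms. Since $\xi$ is a smooth $\R$-rational point of $X$, the cotangent space $\m_\xi/\m_\xi^2$ is one-dimensional over $\R$, so there exists $t \in \R[X]$ with $val_\xi(t) = 1$. The morphism $t \colon X \to \A^1 = \Spec\R[t]$ is then étale at $\xi$: it is unramified because $t$ generates $\m_\xi$ in $\scrO_{X,\xi}$, and it is flat because any dominant morphism between smooth curves in characteristic zero is flat (by miracle flatness, or directly from equidimensionality together with regularity of source and target).

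By the local structure theorem for étale morphisms (see e.g.\ \cite{ega4}), the morphism $t$ is, in some Zariski open neighborhood $U \subset X$ of $\xi$, of standard étale form. Concretely, this produces elements $u \in \R[U]$ and $s \in \R[t, u]$ with $s(\xi) \neq 0$, together with a polynomial $F \in \R[t][u]$ monic in $u$, such that $F(t, u) = 0$ in $\R[U]$, the derivative $F'(u)$ is invertible in $\R[U]$, and $\R[U] = \R[t, u]_s$ as subrings of the fraction field $\R(X)$.

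The remaining task is to verify that $u$ and $s$---which the structure theorem produces a priori only as elements of $\R[U]$---actually lie in $\R[X]$, as required by the lemma statement. For $u$, this follows because the monic relation $F(t, u) = 0$ over $\R[t]$ makes $u$ integral over $\R[t] \subset \R[X]$, and $\R[X]$ is integrally closed in its fraction field $\R(X)$ since $X$ is nonsingular; hence $u \in \R[X]$, and consequently $s \in \R[t, u] \subset \R[X]$ as well. The main substantive ingredient is thus the local structure theorem for étale morphisms; the remaining verifications (étaleness of $t$ at $\xi$, integrality of $u$ over $\R[t]$ via the monic equation, and integral closedness of $\R[X]$) are each immediate from the smoothness hypothesis at $\xi$.
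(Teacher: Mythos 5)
Your argument takes a genuinely different route from the paper's. The paper's proof projects $X$ birationally onto a plane curve $Y\subset\A^2$ that is nonsingular at the image of $\xi$ (citing Fulton), then takes $t,u$ to be affine coordinates on $\A^2$, so that $\R[Y]=\R[t,u]$ and $\R[U]=\R[t,u]_s$ falls out from $U\cong$ (a principal open of $Y$). Your approach instead observes that $t$ defines an \'etale morphism $X\to\A^1$ near $\xi$ and invokes the structure theorem for \'etale morphisms to exhibit $\R[U]$ as a localization of $\R[t,u]$. Both are valid; the paper's route leans on a concrete classical projection argument, while yours is more intrinsic and makes clearer where $u$ comes from (a primitive element for the residue field extension). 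Your observation that $u$ and $s$ automatically lie in $\R[X]$ by normality is a clean way to match the lemma's precise statement, and it is a point the paper's proof also has to address implicitly.

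One small point you should tighten: the standard \'etale form gives $\R[U]\cong\scrO(V)[u]_s/(F)$ where $V\subset\A^1$ is an open affine around $t(\xi)$, not necessarily $\A^1$ itself. So a priori $s\in\R[t]_g[u]$ for some $g\in\R[t]$ with $g(t(\xi))\neq0$, rather than $s\in\R[t,u]$; correspondingly $\R[U]=\R[t,u,1/g,1/s]$. This is easily repaired by replacing $s$ with $g^k s$ for $k$ large enough to clear denominators, so that the new $s$ lies in $\R[t,u]$ and $\R[U]=\R[t,u]_s$. With that adjustment (and the harmless proviso that, if $X$ has several components, one first discards the ones not through $\xi$ so that $t$ is dominant), your proof is correct.
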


\begin{proof}
Choose an open neighborhood $V$ of $\xi$ on $X$ and a morphism
$\pi\colon V\to\A^2$ which is birational onto $Y:=\ol{\pi(V)}$ such
that $\pi(\xi)=\omega$ is a nonsingular point of $Y$ (see e.g.\
\cite[Problem 7.21]{fu}).
Then $\R[Y]$ is generated over $\R$ by two elements $t,\,u$, and we
can assume that $t$ is a local parameter of $Y$ at~$\omega$. Since
suitable neighborhoods of $\xi$ (on $X$) and $\omega$ (on $Y$) are
isomorphic under $\pi$, we are done.
\end{proof}

\begin{lab}\label{assnowon}%
Assume from now on that $A=\R[X_0]=\R[t,u]_s$ with $s(\xi)\ne0$ and
$val_\xi(t)=1$ (we may do so by Lemma \ref{i2gen}). Clearly, we can
also assume $val_\xi(u)\ge2$. By changing $s$ we can assume in
addition that the $A$-module $\Omega=\Omega_{A/\R}$ is freely
generated by~$dt$, and that $t$ generates the maximal ideal $\m_\xi$
of~$A$.
Writing $(m,n):=(m_\xi,n_\xi)$ (so $1\le m<n$), we may assume
$val_\xi(x)=m$ and $val_\xi(y)=n$.
Having arranged matters in this way, we'll establish a decomposition
\eqref{txydecomp} for the tangent tensor
$$T\>:=\>T_t(x,y)\>=\>\frac{dx}{dt}\delta(y)-\frac{dy}{dt}\delta(x)$$
in $A\otimes A$, with $u_1=t$ and $u_2=u$.
\end{lab}

\begin{lem}\label{i2gen2}%
The ideal $I=\ker(A\otimes A\labelto{\rm{mult}}A)$ of $A\otimes A$
is generated by $\delta(t)$ and $\delta(u)$.
\end{lem}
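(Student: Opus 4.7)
The plan is to let $J = (\delta(t),\delta(u)) \subset A\otimes A$ and show $I = J$. The inclusion $J \subseteq I$ is immediate because $\text{mult}(\delta(a)) = a - a = 0$ for every $a\in A$. For the reverse inclusion, the strategy is to show that the quotient $A\otimes A/J$ is so small that it must coincide with $A$ via the multiplication map, forcing $J = \ker(\text{mult}) = I$.

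To carry this out, I would consider $A\otimes A/J$ as an $A$-algebra via the second embedding $i_2\colon a \mapsto 1\otimes a$. Modulo $J$ we have $t\otimes 1 \equiv 1\otimes t$ and $u\otimes 1 \equiv 1\otimes u$, so any polynomial expression $p(t,u)\in \R[t,u]$ satisfies $p(t,u)\otimes 1 \equiv 1\otimes p(t,u) \pmod{J}$ by a direct induction on degree (using that $J$ is an ideal and that both embeddings are ring homomorphisms). In particular, since $s\in \R[t,u]$, we get $s\otimes 1 \equiv 1\otimes s \pmod J$. Now $s\otimes 1$ and $1\otimes s$ are both units in $A\otimes A$ (with inverses $s^{-1}\otimes 1$ and $1\otimes s^{-1}$, which exist because $s$ is invertible in $A = \R[t,u]_s$), so the congruence $s\otimes 1 \equiv 1\otimes s$ yields $s^{-1}\otimes 1 \equiv 1\otimes s^{-1} \pmod J$. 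Consequently, for every $a\in A = \R[t,u]_s$ we obtain $a\otimes 1 \equiv 1\otimes a \pmod{J}$.

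This shows that every elementary tensor $a\otimes b$ is congruent modulo $J$ to $1\otimes(ab) = i_2(ab)$, so the quotient ring $A\otimes A/J$ equals the image of $i_2$. Hence $i_2$ induces a surjection $A \twoheadrightarrow A\otimes A/J$. On the other hand, $\text{mult}\colon A\otimes A\to A$ factors through $A\otimes A/J$ (because $J\subseteq I$), and the composition $A \xrightarrow{i_2} A\otimes A \twoheadrightarrow A\otimes A/J \to A$ is the identity of $A$. The two maps between $A$ and $A\otimes A/J$ are therefore mutually inverse, so $A\otimes A/J \cong A$ and $J = I$, as required.

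The only step that is not entirely formal is the handling of the localization, i.e.\ passing from $s\otimes 1 \equiv 1\otimes s$ to $s^{-1}\otimes 1 \equiv 1\otimes s^{-1}$ modulo $J$; but this is straightforward once one notes that multiplying the congruence $s\otimes 1 - 1\otimes s \in J$ by $(s^{-1}\otimes 1)(1\otimes s^{-1})$ already yields $1\otimes s^{-1} - s^{-1}\otimes 1 \in J$.
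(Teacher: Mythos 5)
Your proof is correct and is essentially the paper's argument in slightly different packaging: the paper shows $\delta(f)\in(\delta(t),\delta(u))$ for all $f\in A$ by expanding $\delta$ of products and of $s^{-1}$, while you phrase the same fact as the equality of the two ring homomorphisms $A\to A\otimes A/J$ induced by $a\mapsto a\otimes1$ and $a\mapsto 1\otimes a$, which agree on the $\R$-algebra generators $t,u,s^{-1}$ of $A$. Both reduce to the observation that $J\subseteq I$ and that $A$ is generated over $\R$ by $t$, $u$, and $s^{-1}$.
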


\begin{proof}
As an ideal, $I$ is generated by all elements $\delta(f)$, $f\in A$,
since $\sum_ia_i\otimes b_i=\sum_ib_i\delta(a_i)$ if $a_i,\,b_i\in A$
with $\sum_ia_ib_i=0$.
For $a,\,b\in A$ one has $\delta(ab)=a\delta(b)+b\delta(a)+
\delta(a)\delta(b)$.
If $s\in A$ is a unit of $A$ then $\delta(\frac1s)=
-(\frac1s\otimes\frac1s)\delta(s)$.
Since $A$ is a localization of $\R[t,u]$, the lemma follows from
these remarks.
\end{proof}

Let $J$ denote the kernel of the ring homomorphism
$A\otimes A\labelto{\text{mult}}A\labelto\xi A/\m_\xi=\R$,
$\alpha\mapsto\ol\alpha$, and note that $I\subset J$. Recall
$m=val_\xi(x)$ and $n=val_\xi(y)$. For notational convenience we
abbreviate $t_1:=t\otimes1$ and $t_2:=1\otimes t\in A\otimes A$, so
$\delta(t)=t_1-t_2$. Since $\m_\xi$ is generated by $t$, the ideal
$J$ is generated by $t_1$ and~$t_2$.

\begin{lem}\label{keycong}%
Let $a=\ol{xt^{-m}}$ and $b=\ol{yt^{-n}}$. Then $0\ne a,\,b\in\R$ and
$$T\>=\>ab\,t_2^{m-1}\cdot\bigl(\delta(t)^2(S+w)+\delta(u)^2w'
\bigr)$$
with
\begin{equation}\label{keyterm}%
S\>:=\>\frac{m\delta(t^n)-nt_2^{n-m}\delta(t^m)}{\delta(t)^2}\>=\>
\sum_{j=2}^n\Bigl(m\choose nj-n\choose mj\Bigr)\cdot t_2^{n-j}
\delta(t)^{j-2}
\end{equation}
and suitable $w,\,w'\in J^{n-1}$.
\end{lem}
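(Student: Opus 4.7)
The plan is to compute $T$ by expanding $\delta(x)$ and $\delta(y)$ with the Taylor formula \eqref{generictaylor} relative to $t$, extract the main term, and show the remaining contributions fit into $t_2^{m-1}\bigl(\delta(t)^2 J^{n-1} + \delta(u)^2 J^{n-1}\bigr)$.

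First I set up the Taylor coefficients. Since $t$ generates $\m_\xi A_\xi$, the hypotheses $val_\xi(x) = m$ and $val_\xi(y) = n$ let me write $x = at^m + p$ and $y = bt^n + q$ with $a = \ol{xt^{-m}} \in \R^\times$, $b = \ol{yt^{-n}} \in \R^\times$, $p \in (t^{m+1})A_\xi$, and $q \in (t^{n+1})A_\xi$. With $m^{(i)} := m(m-1)\cdots(m-i+1)$, differentiating gives $\frac{d^i x}{dt^i} = a\,m^{(i)}\,t^{m-i} + \rho_i$ with $val_\xi(\rho_i) \ge m+1-i$, and analogously for $y$.

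I apply \eqref{generictaylor} with $d = n$ to both $x$ and $y$, writing $\wt{f} := 1 \otimes f$. Substituting into $T = \wt{\tfrac{dx}{dt}}\,\delta(y) - \wt{\tfrac{dy}{dt}}\,\delta(x)$ gives
$$T \;=\; \sum_{i=1}^n \frac{1}{i!}\,\wt{w_i}\,\delta(t)^i \;+\; E, \qquad w_i := \frac{dx}{dt}\frac{d^i y}{dt^i} - \frac{dy}{dt}\frac{d^i x}{dt^i},$$
where $E := \wt{\tfrac{dx}{dt}}\,E_y - \wt{\tfrac{dy}{dt}}\,E_x$ with $E_x, E_y \in I^{n+1}$. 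Commutativity of $A_\xi$ gives $w_1 = 0$. For $i \ge 2$, a Leibniz expansion of the product derivatives yields
$$w_i \;=\; ab\bigl(m\,n^{(i)} - n\,m^{(i)}\bigr)\,t^{m+n-i-1} + R_i, \qquad val_\xi(R_i) \ge m+n-i.$$
Using $(m\,n^{(i)} - n\,m^{(i)})/i! = m\binom{n}{i} - n\binom{m}{i}$, summing the leading parts over $i = 2,\dots,n$ assembles exactly $ab\,t_2^{m-1}\delta(t)^2\,S$ by \eqref{keyterm}.

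It remains to control the two kinds of remainder. Each $\frac{1}{i!}\wt{R_i}\,\delta(t)^i$ lies in $(t_2^{m+n-i})\,\delta(t)^i = t_2^{m-1}\delta(t)^2\cdot\bigl(t_2^{n-i+1}\,\delta(t)^{i-2}\bigr)$, and the second factor has weight $(n-i+1)+(i-2) = n-1$ in the generators $t_2, \delta(t)$ of $J = (t_1,t_2)$, so it lies in $J^{n-1}$. For $E$: since $\wt{\tfrac{dx}{dt}} \in (t_2^{m-1})$ and $\wt{\tfrac{dy}{dt}} \in (t_2^{n-1}) \subset (t_2^{m-1})$, one has $E \in t_2^{m-1}\,I^{n+1}$; an inspection of a generator $\delta(t)^i\delta(u)^{n+1-i}$ of $I^{n+1}$ (noting that $n \ge 2$ forces $\max(i, n{+}1{-}i) \ge 2$) gives $I^{n+1} \subset \delta(t)^2\,J^{n-1} + \delta(u)^2\,J^{n-1}$. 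Combining the main term, the $R_i$-remainders, and $E$, and absorbing the scalar $ab \in \R^\times$, produces the asserted identity with $w \in J^{n-1}$ collecting the $\delta(t)^2$-corrections and $w' \in J^{n-1}$ the $\delta(u)^2$-corrections.

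The main obstacle will be the Leibniz bookkeeping for $w_i$: one must simultaneously isolate the leading coefficient $ab(m\,n^{(i)} - n\,m^{(i)})$ and establish $val_\xi(R_i) \ge m+n-i$, which requires careful product expansion of the leading-plus-remainder decompositions of $\frac{dx}{dt}, \frac{dy}{dt}, \frac{d^i x}{dt^i}, \frac{d^i y}{dt^i}$. Once these valuation bounds are in place, the rest reduces to routine $J$-weight counts.
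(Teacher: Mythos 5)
Your proof is correct and follows the paper's basic strategy --- Taylor-expand $\delta(x)$ and $\delta(y)$ in powers of $\delta(t)$ via \eqref{generictaylor}, substitute into $T$, extract the leading term $ab\,t_2^{m-1}\delta(t)^2 S$, and show the errors land in $t_2^{m-1}\bigl(\delta(t)^2J^{n-1}+\delta(u)^2J^{n-1}\bigr)$ --- but with a genuine streamlining of the error bookkeeping. The paper truncates $\delta(x)$ at order $d=m$ and $\delta(y)$ at order $d=n$, so the $x$-side error $E_x$ lands only in $I^{m+1}$; when $m=1$ this is $I^2$, which contains the cross term $\delta(t)\delta(u)$ and is \emph{not} inside $\langle\delta(t)^2,\delta(u)^2\rangle$, forcing the paper to treat $m=1$ separately by replacing \eqref{deltax} with an approximation modulo $I^3$. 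You instead truncate both expansions uniformly at $d=n$, so that $E_x,E_y\in I^{n+1}$, and then $I^{n+1}\subset\delta(t)^2J^{n-1}+\delta(u)^2J^{n-1}$ (your inspection of the generators $\delta(t)^i\delta(u)^{n+1-i}$, equivalent to the paper's \eqref{i3trick} with $r=n+1$) absorbs both errors at once --- no case split on $m$ is needed. The extra Taylor terms $\frac1{i!}\frac{d^ix}{dt^i}\delta(t)^i$ for $m<i\le n$ that this introduces are harmless: there $\choose mi=0$, so they feed exactly the $m\choose ni$ part of $S$, and your Leibniz estimate $val_\xi(R_i)\ge m+n-i$ controls the rest. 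One small imprecision worth noting: for $i>m+1$ your stated bound $val_\xi(\rho_i)\ge m+1-i$ is negative, hence vacuous (valuations in $A_\xi$ are $\ge0$); what actually saves the estimate in that range is $val_\xi\bigl(\frac{dy}{dt}\cdot\frac{d^ix}{dt^i}\bigr)\ge n-1\ge m+n-i$, which still yields $val_\xi(R_i)\ge m+n-i$.
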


\begin{proof}
Let $\m=\m_\xi\subset A$, the maximal ideal corresponding to~$\xi$.
The local expansions of $x,\,y\in A$ with respect to the local
parameter $t$ are $x=at^m+\cdots$, $y=bt^n+\cdots$. So
\begin{equation}\label{dxdt}%
\frac{d^ix}{dt^i}\>\equiv\>a\choose mii!\,t^{m-i}\ (\text{mod }
\m^{m-i+1})\quad(1\le i\le m),
\end{equation}
\begin{equation}\label{dydt}%
\frac{d^jy}{dt^j}\>\equiv\>b\choose njj!\,t^{n-j}\ (\text{mod }
\m^{n-j+1})\quad(1\le j\le n).
\end{equation}
By \eqref{generictaylor} we have
\begin{equation}\label{deltax}%
\delta(x)\ \equiv\ \sum_{i=1}^m\frac1{i!}\,\frac{d^ix}{dt^i}\,
\delta(t)^i\quad(\text{mod }I^{m+1})
\end{equation}
and
\begin{equation}\label{deltay}%
\delta(y)\ \equiv\ \sum_{j=1}^n\frac1{j!}\,\frac{d^jy}{dt^j}\,
\delta(t)^j\quad(\text{mod }I^{n+1})
\end{equation}
in $A\otimes A$. Substituting these into $T$ and observing that the
terms linear in $\delta(t)$ cancel, this gives
$$T\>\equiv\>\frac{dx}{dt}\sum_{j=2}^n\frac1{j!}\frac{d^jy}{dt^j}
\delta(t)^j-\frac{dy}{dt}\sum_{i=2}^m\frac1{i!}\frac{d^ix}{dt^i}
\delta(t)^i$$
modulo $\frac{dx}{dt}I^{n+1}+\frac{dy}{dt}I^{m+1}\subset
t_2^{m-1}I^{n+1}+t_2^{n-1}I^{m+1}$. Further, using approximations
\eqref{dxdt} and \eqref{dydt}, we get (recall $n>m$)
\begin{align*}
T & \equiv\ ab\biggl(mt_2^{m-1}\sum_{j=2}^n\choose njt_2^{n-j}
  \delta(t)^j-nt_2^{n-1}\sum_{i=2}^m\choose mit_2^{m-i}\delta(t)^i
  \biggr)
   \\
& =\ ab\,t_2^{m-1}\,\delta(t)^2\biggl(m\sum_{j=2}^n\choose nj
  t_2^{n-j}\delta(t)^{j-2}-n\sum_{i=2}^n\choose mit_2^{n-i}
  \delta(t)^{i-2}\biggr) \\
& =\ ab\,t_2^{m-1}\,\delta(t)^2\cdot S
\end{align*}
modulo $t_2^{m-1}M'$ where
$$M'\>:=\>I^{n+1}+t_2^{n-m}I^{m+1}+\bigl\langle t_2^{n+1-j}
\delta(t)^j,\ j=2,\dots,n\bigr\rangle.$$
Recall $I=\idl{\delta(t),\delta(u)}$ (Lemma \ref{i2gen2}). This
implies $I^3=\idl{\delta(t)^2,\,\delta(u)^2}\cdot I\subset
\langle\delta(t)^2,\delta(u)^2\rangle J$, and therefore
\begin{equation}\label{i3trick}%
I^r\>\subset\>\idl{\delta(t)^2,\,\delta(u)^2}\cdot J^{r-2},\quad
r\ge3.
\end{equation}
Let $M:=\langle\delta(t)^2,\delta(u)^2\rangle J^{n-1}$. By the
previous remark, all summands of $M'$ are contained in $M$ except
$t_2^{n-m}I^{m+1}$ in case $m=1$. So the lemma is already proved if
$m>1$.
To deal with the case $m=1$, replace \eqref{deltax} by the finer
approximation
$$\delta(x)\>\equiv\>\frac{dx}{dt}\delta(t)+\frac12\frac{d^2x}{dt^2}
\delta(t)^2\quad({\rm mod}\ I^3)$$
which again holds by \eqref{generictaylor}. Proceeding otherwise as
before, we get $T\equiv abt_2^{m-1}\delta(t)^2\cdot S$
modulo~$M=t_2^{m-1}M$, since the additional term $\frac{dy}{dt}\cdot
\frac12\frac{d^2x}{dt^2}\delta(t)^2$ lies in $\delta(t)^2J^{n-1}
\subset M$ (note $n-1=m+n-2$).
This proves the lemma in all cases.
\end{proof}

Recall $\delta(t)=t_1-t_2$.

\begin{lem}\label{lemsurpris}%
Let $1\le m<n$, let $S$ be defined as in \eqref{keyterm}. Then $S$
is equal to
{\small
\begin{equation}\label{polynomid}%
(n-m)\sum_{i=0}^{m-2}(i+1)t_1^it_2^{n-i-2}+m(n-m)t_1^{m-1}t_2^{n-m-1}
+m\sum_{j=0}^{n-m-2}(j+1)t_1^{n-2-j}t_2^j.
\end{equation}}
\end{lem}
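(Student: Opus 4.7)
The plan is to verify the polynomial identity by direct computation, working with the quotient form of $S$ in \eqref{keyterm}. Writing $\delta(t)=t_1-t_2$ and $\delta(t^k)=t_1^k-t_2^k$, the numerator of $S\cdot\delta(t)^2$ simplifies to
$$F(t_1,t_2)\>:=\>m\delta(t^n)-nt_2^{n-m}\delta(t^m)\>=\>mt_1^n-nt_1^mt_2^{n-m}+(n-m)t_2^n.$$
So \eqref{polynomid} is equivalent to $F=(t_1-t_2)^2\cdot[\text{RHS of \eqref{polynomid}}]$. A quick sanity check — $F$ and $\partial F/\partial t_1=mn(t_1^{n-1}-t_1^{m-1}t_2^{n-m})$ both vanish on the diagonal $t_1=t_2$ — confirms that $(t_1-t_2)^2$ divides $F$, so $S$ is indeed a polynomial in $t_1,t_2$.

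Next, I strip off the two factors of $(t_1-t_2)$ one at a time using finite geometric series. Regrouping
$$F\>=\>mt_1^m(t_1^{n-m}-t_2^{n-m})-(n-m)t_2^{n-m}(t_1^m-t_2^m)$$
and applying $a^k-b^k=(a-b)\sum_{\ell=0}^{k-1}a^{k-1-\ell}b^\ell$ to each difference yields
$$\frac{F}{t_1-t_2}\>=\>m\sum_{j=0}^{n-m-1}t_1^{n-1-j}t_2^j-(n-m)\sum_{i=0}^{m-1}t_1^{m-1-i}t_2^{n-m+i}.$$
This quantity vanishes at $t_1=t_2$ (the two sums contribute $\pm m(n-m)t_2^{n-1}$), so a second application of the same factorization — now to each individual monomial $t_1^at_2^b=t_2^{a+b}+(t_1-t_2)\sum_{\ell}t_1^{\ell}t_2^{a-1-\ell}\cdot t_2^b$ — produces a polynomial which equals $S$.

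Finally, I collect coefficients by switching the order of summation in the resulting double sum and partitioning the index range of the outer variable $t_1^it_2^{n-2-i}$ into three pieces according to where $i$ lies relative to $m-1$. For $0\le i\le m-2$ both inner sums contribute, giving coefficient $m(n-m)-(n-m)(m-1-i)=(n-m)(i+1)$; for $i=m-1$ only the first sum contributes, giving $m(n-m)$; and for $m\le i\le n-2$ only the first sum contributes, giving $m(n-1-i)$, which after the reindexing $j=n-2-i$ becomes $m(j+1)$. These three blocks are precisely the three summands in \eqref{polynomid}. The entire argument is a computation; the only real obstacle is the index bookkeeping, specifically tracking how the inner summation ranges depend on the outer index $i$ and correctly substituting $j=n-2-i$ in the last block.
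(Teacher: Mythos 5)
Your proof is correct. It differs from the paper's in direction rather than substance: the paper introduces $S_1$ as the expression \eqref{polynomid} and observes that the equivalent claim $(t_1-t_2)^2 S_1 = mt_1^n - nt_1^m t_2^{n-m} + (n-m)t_2^n$ can be ``checked coefficient-wise,'' i.e.\ it \emph{verifies} the product identity; you instead \emph{derive} $S$ by dividing $F$ by $(t_1-t_2)$ twice via finite geometric series and then collecting coefficients. Both are direct elementary computations. Your route has the minor pedagogical advantage of explaining where the strictly positive coefficients in \eqref{polynomid} come from (the counting argument makes the positivity transparent before one even knows the answer), whereas the paper's route is shorter but presupposes the formula. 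Your index bookkeeping in the final collection step is accurate: for $0\le i\le m-2$ the first sum contributes the full count $n-m$ and the second contributes $m-1-i$, giving $(n-m)(i+1)$; for $i=m-1$ the second sum is empty; and for $m\le i\le n-2$ the first sum contributes $n-1-i$ terms, giving $m(j+1)$ after $j=n-2-i$.
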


\begin{proof}
Let $S_1$ denote the expression \eqref{polynomid}. It suffices to
prove $\delta(t)^2S_1=m\delta(t^n)-nt_2^{n-m}\delta(t^m)$ (see
\eqref{keyterm}), which is the identity
$$(t_1-t_2)^2\,S_1\>=\>mt_1^n-nt_1^mt_2^{n-m}+(n-m)t_2^n$$
of binary forms. This can be checked coefficient-wise.
\end{proof}

\begin{lab}\label{endproof}%
We now complete the proof of Theorem \ref{txyidentaet}.
Note that \eqref{polynomid} is a linear combination of all the
products $t_1^it_2^j$ (where $i,j\ge0$ and $i+j=n-2$) with
\emph{strictly positive} (integer) coefficients.
So, by Lemmas \ref{keycong} and \ref{lemsurpris}, we can write
$T=abt_2^{m-1}T'$ with
$$T'\>=\>\delta(t)^2\sum_{i=0}^{n-2}c_it_1^it_2^{n-2-i}+
\delta(t)^2w+\delta(u)^2w'$$
where $0<c_i\in\R$ and $w,\,w'\in J^{n-1}$. Further, since
$val_\xi(u)\ge2$ (see \ref{assnowon}), we have $\frac{du}{dt}\in\m$,
so $\delta(u)\in t_2\delta(t)+I^2$ by \eqref{generictaylor}. This
gives $\delta(u)^2\in t_2^2\delta(t)^2+t_2I^3+I^4\subset
\delta(t)^2J^2+I^3J$, and hence
$$\delta(u)^2t_1^it_2^{n-2-i}\in J^{n-2}\cdot
\bigl(\delta(t)^2J^2+I^3J\bigr)=\delta(t)^2J^n+I^3J^{n-1}\subset
\langle\delta(t)^2,\,\delta(u)^2\rangle J^n$$
for every $0\le i\le n-2$ (use \eqref{i3trick} again). So we can as
well write
\begin{equation}\label{vorletzte}%
T'\>=\>\delta(t)^2\cdot\Bigl(w+\sum_{i=0}^{n-2}c_it_1^it_2^{n-2-i}
\Bigr)+\delta(u)^2\cdot\Bigl(w'+\sum_{i=0}^{n-2}t_1^it_2^{n-2-i}
\Bigr)
\end{equation}
with new elements $w,\,w'\in J^{n-1}$.

Now the essential point is, the products $t_1^it_2^{n-2-i}$
($0\le i\le n-2$) generate the ideal $J^{n-2}$ of $A\otimes A$.
So we can express $w$ resp.\ $w'$ as
$$w\>=\>\sum_{i=0}^{n-2}w_it_1^it_2^{n-2-i},\quad w'\>=\>
\sum_{i=0}^{n-2}w'_it_1^it_2^{n-2-i}$$
with suitable elements $w_i,\,w'_i\in J$ ($0\le i\le n-2$). Combining
these with \eqref{vorletzte} finally gives
\begin{equation}\label{final}%
T'\>=\>\delta(t)^2\sum_{i=0}^{n-2}(c_i+w_i)t_1^it_2^{n-i-2}+
\delta(u)^2\sum_{i=0}^{n-2}(1+w'_i)t_1^it_2^{n-i-2}
\end{equation}
which shows that $T$ has the form asserted in Theorem
\ref{txyidentaet}.
\qed
\end{lab}


\section{Proof of the main theorem}\label{mainpf}%

\begin{lab}\label{1streduct}%
Let $K\subset\R^2$ be a closed convex \sa\ set. Ultimately we want to
prove $\sxdeg(K)\le2$ by applying Theorems \ref{sxdegsosx} and
\ref{txyidentaet}. To do this we start by making a series of
reductions. We can assume that $K$ is not contained in a
line and does not contain a half-plane.
Then $K$ is the convex hull of its boundary $\partial K$
\cite[Theorem 18.4]{ro},
and $\partial K$ is a \sa\ set of dimension one. So it suffices to
prove $\sxdeg(\ol{\conv(S)})\le2$ for every closed \sa\ set
$S\subset\R^2$ with $\dim(S)=1$.
If $S$ is decomposed as a finite union $S=S_1\cup\cdots\cup S_r$ of
\sa\ sets $S_i$, then it is enough to show $\sxdeg(\ol{\conv(S_i)})
\le2$ for $i=1,\dots,r$.
Indeed, if $K_i=\ol{\conv(S_i)}$ has $\sxdeg(K_i)\le2$ for every~$i$,
the same is true for $K=\ol{\conv(K_1\cup\cdots\cup K_r)}$
(Proposition \ref{sxconvhull}(b) and Corollary \ref{sxdegkpk}),
and clearly $K=\ol{\conv(S)}$.
In this way we can reduce to the case where $C\subset\A^2$ is an
irreducible curve of degree~$>1$, and $S\subset C(\R)$ is a closed
subset homeomorphic either to a circle or to a closed interval in
the line.
Since the curve $C$ has only finitely many singular points or points
with a higher order tangent, we can in addition assume that $S$
contains no such point except possibly as a boundary point of~$S$.
We can also assume that any $f\in P_S$ vanishes in at most one point
of~$S$.
\end{lab}

\begin{lab}\label{erzpk}%
For $S$ as in \ref{1streduct} let $K=\ol{\conv(S)}$, and let
$P=P_K=\{f\in\R[x,y]\colon f\ge0$ on~$S$, $\deg(f)\le1\}$. Let $E$ be
the union of the extreme rays of the convex cone $P$, so $E$ consists
of all $f\in P$ for which $f=f_1+f_2$ and $f_1,\,f_2\in P$ implies
$f_1,\,f_2\in\R_\plus f$.
Then $E$ is a \sa\ subset of $P$ and $P=\cone(E)$, the conic hull
of~$E$, by the Krein-Milman theorem.

Let $f\in E$, and assume that $f$ is not constant.
Then $\inf f(S)=0$.
In addition, if there is $b\in S$ with $f(b)=0$, then $f$ is tangent
to the curve $C$ at~$b$, or else $b$ is a boundary point of~$S$.
If $f>0$ on $S$ then the line $f=0$ is an asymptote of $C$ at
infinity. Note that $C$ has only finitely many such asymptotes.
\end{lab}

\begin{lab}
For proving $\sxdeg(K)\le2$ it is enough to show
$\sosx f^\otimes(a)\le2$ for every real closed field $R\supset\R$,
every $f\in E_R\subset R[x,y]$ and every $a\in S_R\subset C(R)$
(Theorem \ref{sxdegsosx}). When $f$ or $a$ has coordinates in $\R$
this holds trivially, since then the tensor $f^\otimes(a)$ lies in
$R\otimes1$ resp.\ in $1\otimes R$.
Therefore we only need to consider the case where $f=\tau_b$ is an
equation of the tangent to $C$ at a point $b\in S_R$ which is not
$\R$-rational.
In particular, $b$ is a nonsingular $R$-point of~$C$.

Neither of the points $a,\,b\in S_R$ needs to have bounded
coordinates in general. But this can be rectified by making a
suitable projective coordinate change over~$\R$ (we consider
$\A^2\subset\P^2$ in the standard way).
So we can assume that $a,\,b$ have coordinates in~$B$, the canonical
valuation ring of~$R$ (see \ref{valringb}). Let
$\ol a,\,\ol b\in S\subset C(\R)$ be their specializations. By
scaling we can also assume that the coefficients of $f=\tau_b$ lie in
$B$, and not all lie in $\m_B$. Then $\tau_b(a)\in B$ and
$\tau_b(a)\ge0$. If $\ol{\tau_b(a)}>0$ then $\sosx\tau_b^\otimes(a)
=1$ by Proposition \ref{keyobs}. So we can assume $\ol{\tau_b(a)}=0$.
In this case, the reduced linear polynomial $\ol{\tau_b}\in\R[x,y]$
is nonnegative on $S$ and vanishes in both $\ol a$, $\ol b\in S$. By
our assumptions (see \ref{1streduct}) we therefore have
$\ol a=\ol b$.
\end{lab}

\begin{lab}
In summary we can assume that $a,\,b\in C(B)$ are not $\R$-rational
but have the same specialization $\ol a=\ol b=:\eta\in C(\R)$, and
that $f\in B[x,y]$ is the tangent to $C$ at the point $b$. Let
$\pi\colon X\to C$ be the normalization of $C$, write $A=\R[X]$, and
let $a',\,b'\in X(B)$ be the preimages of $a,\,b$ under~$\pi$.
We have $\ol{a'}=\ol{b'}$ in $X(\R)$. Indeed, this can only fail if
$\eta$ is a singular point of $C$. But if $\eta$ is singular, then
$S$ contains only one half-branch centered at~$\eta$, by the initial
assumptions \ref{1streduct}, and so we still have $\ol{a'}=\ol{b'}$.
Denote this point by~$\xi$, and write $A_\xi:=\scrO_{X,\xi}$ for the
local ring of $X$ at~$\xi$, as in \ref{setup3}. The evaluation
homomorphism $A\to B$, $p\mapsto p(a')$ at~$a'$ extends to a ring
homomorphism $A_\xi\to B$, since $\ol{p(a')}=p(\ol a')=p(\xi)$ for
every $p\in A$.
Similarly we have an evaluation homomorphism $A_\xi\to B$, $q\mapsto
q(b')$ at~$b'$. So there is a well-defined ring homomorphism
\begin{equation}\label{homab}%
\phi\colon A_\xi\otimes A_\xi\to B\otimes B,\quad
p\otimes q\mapsto p(a')\otimes q(b').
\end{equation}
Let $T=T_s(x,y)\in A_\xi\otimes A_\xi$ be the tangent tensor (for
some local parameter $s$ at~$\xi$), as in \ref{setup2}. According to
\ref{setup2},
$f^\otimes(a)$ is the image of $T$ under the homomorphism
\eqref{homab}, up to a scaling factor of the form $c\otimes1$.
Hence the decomposition of $\pm T$ established in Theorem
\ref{txyidentaet} induces a corresponding decomposition of the tensor
$f^\otimes(a)$ in $B\otimes B$, via the homomorphism \eqref{homab}.
\end{lab}

\begin{lab}\label{laststepofproof}%
Let $(m,n)=(m_\xi,n_\xi)$ as in \ref{setup3}, and assume first that
$(m,n)=(1,2)$. By Corollary \ref{txygenericase}, combined with
Proposition \ref{keyobs}, we have $\sosx(\pm\phi(T))\le2$ for one
choice of the sign $\pm$. Therefore $\sosx f^\otimes(a)\le2$, see
Remark \ref{sgntenseval}.

Now assume $(m,n)\ne(1,2)$. Then, by the assumptions in
\ref{1streduct}, $\eta$ is an endpoint of $S$. So both $a,\,b$ lie on
the same local real halfbranch of $C$ centered at~$\eta$. Therefore
we can assume in Theorem \ref{txyidentaet} that the local uniformizer
$t$ is positive in $a$ and $b$ (otherwise replace $t$ by $-t$).
Reading the right hand side of \eqref{txydecomp} in $B\otimes B$ via
the homomorphism \eqref{homab}, we see again that this element is a
sum of squares of binomial tensors. This completes the proof of
Theorem \ref{mainthm}.
\qed
\end{lab}

\begin{rem}
One may wonder whether Theorem \ref{mainthm} extends to convex \sa\
sets $K\subset\R^2$ that are not closed. It is known that any such
$K$ is a spectrahedral shadow \cite{sch:curv}. However, we were not
able to decide whether always $\sxdeg(K)\le2$ holds. Given closed
convex subsets $T\subset S$ of $\R^2$, the question is whether the
convex set $(T\looparrowleft S)$ (see \cite[Theorem 3.8]{nt} and
\cite{sch:curv}, proof of Theorem 6.8) has $\sxdeg\le2$.
From Netzer's argument in \cite{nt} (proof of Theorem 3.8), we
only seem to get the bound $\sxdeg(T\looparrowleft S)\le4$. So
$\sxdeg(K)\le4$ holds for every convex \sa\ set $K\subset\R^2$, but
it is not clear whether this bound is sharp.
\end{rem}


\section{Constructive aspects}\label{constrasp}%

The proof of Theorem \ref{mainthm} in Sections \ref{maintech} and
\ref{mainpf} is essentially constructive. That is,
given a closed convex \sa\ set $K\subset\R^2$, one can (in principle)
find an explicit second-order cone representation of~$K$. We first
illustrate this in a particularly easy situation. After this we'll
sketch the general procedure.

\begin{lab}\label{avex}%
Let $f(t)\in\R[t]$ be a polynomial that is strictly convex on a
neighborhood of~$0$, say $f''(t)>0$ for $|t|<1$. We show how to
find a second order cone representation of the epigraph of $f$
$$K_a\>=\>\{(x,y)\in\R^2\colon y\ge f(x),\ |x|\le a\},$$
for some real number $0<a\le1$. The cone
$P_{K_a}\subset\R+\R x+\R y$ of
linear polynomials nonnegative on $K_a$ is generated by the tangent
$$\tau_v\>=\>y-f(v)-(x-v)f'(v)\ \in\R[x,y]$$
at $(v,f(v))$ for $|v|\le a$, together with the vertical lines
$a\pm x$.
Let us make the procedure of Theorem \ref{txyidentaet} explicit for
this example, in a neighborhood of the origin. The curve $X$ figuring
in this theorem is the affine line, so $A=\R[t]$. For $x=t$ and
$y=f(t)\in\R[t]$, the tangent tensor (\ref{setup2}) in
$\R[t]\otimes\R[t]$ is
$$T\>=\>T(x,y)\>=\>\frac{dx}{dt}\delta(y)-\frac{dy}{dt}\delta(x)\>=\>
\delta(f(t))-f'(t)\delta(t).$$
To simplify notation, write $\R[t]\otimes\R[t]=\R[u,v]$ where
$u=t\otimes1$ and $v=1\otimes t$. Then $\delta(t)=u-v$ and
$\delta(f(t))=f(u)-f(v)$, so expanding the above expression gives
\begin{equation}\label{pullbacktang}%
T\>=\>f(u)-f(v)-(u-v)f'(v)\>=\>(u-v)^2\sum_{k\ge2}\frac1{k!}
f^{(k)}(v)\cdot(u-v)^{k-2},
\end{equation}
which is the second order remainder in the Taylor expansion of~$f(u)$
around $v$. If we read $u,\,v$ as elements of $R$, then
\eqref{pullbacktang} is the tensor evaluation
$\tau_v^\otimes(u,f(u))\in R\otimes R$ of $\tau_v$ at the point
$(u,f(u))\in R^2$.

To arrive at an explicit second order cone representation of $K_a$,
it is enough to find a polynomial decomposition
\begin{equation}\label{posdecompos}%
\sum_{k\ge2}\frac1{k!}f^{(k)}(v)\cdot(u-v)^{k-2}\>=\>
\sum_{i=1}^mp_i(u)q_i(v)
\end{equation}
in such a way that $p_i(0),\,q_i(0)>0$ for each~$i$. This is possible
since substitution $v=u$ on the left gives $\frac12f''(v)>0$, a
strictly positive value. The proof of Proposition \ref{keyobs} shows
a constructive way for finding such $p_i,\,q_i$.

Fix an identity \eqref{posdecompos}, and let $a>0$ be such that
$p_i(t),\,q_i(t)\ge0$ for $|t|\le a$.
Let $V$ be the affine curve with coordinate ring
$\R[V]=\R[t,z_0,z_1,\dots,z_m]/\fra$, where $\fra$ is the ideal
generated by $z_i^2-p_i(t)$ ($1\le i\le m$) and $z_0^2+t^2-a^2$.
In other words, $\R[V]$ is obtained by adjoining square roots of
$p_1(t),\dots,p_m(t)$ and $a^2-t^2$ to $\R[t]$. Let
$\phi\colon V\to\A^2$ be defined by the ring homomorphism
$\phi^*\colon\R[x,y]\to\R[V]$ with $\phi^*(x)=t$ and
$\phi^*(y)=f(t)$. The image $\phi(V(\R))\subset\R^2$ of the real
locus of $V$ is the graph of $f|_{[-a,a]}$.
We have $\phi^*(a\pm x)=a\pm t=\frac1{2a}(z_0^2+(a\pm t)^2)$ in
$\R[V]$,
and
\begin{equation}\label{tvsos}%
\phi^*(\tau_v)\>=\>(t-v)^2\cdot\sum_{i=1}^mq_i(v)z_i^2
\end{equation}
in $\R[V]$ by \eqref{pullbacktang} and \eqref{posdecompos}. Since
$q_i(v)\ge0$ for $|v|\le a$, the cone $\phi^*(P_{K_a})$ consists of
sums of squares in $\R[V]$.
More precisely, let $U_i=\spn(z_i,tz_i)$ ($i=1,\dots,m$) and
$U_0=\spn(1,t)$, $U'_0=\spn(z_0)$. These are linear subspaces of
$\R[V]$ of dimension $\le2$, and
$$\phi^*(P_{K_a})\>\subset\>\Sigma U_0^2+\Sigma U_0'^2+\Sigma U_1^2+
\cdots+\Sigma U_m^2.$$
Therefore, if $A,\,B,\,C\in\R$ then $Ax+By+C\in P_{K_a}$ if and only
if there is an identity
$$At+Bf(t)+C\>=\>c(a^2-t^2)+\sum_{i=0}^mg_i(t)\cdot p_i(t)$$
in $\R[t]$, with $p_0(t)=1$, $0\le c\in\R$ and nonnegative quadratic
polynomials $g_i(t)=a_it^2+2b_it+c_i$ (i.e.\ with
$\choose{a_i\ b_i}{b_i\ c_i}\succeq0$), $i=0,\dots,m$.
This is a semidefinite representation for the cone $P_{K_a}$ that
shows $\sxdeg(P_{K_a})=2$.
Dualizing this representation (c.f.\ Proposition \ref{sxdual2},
Corollary \ref{sxdegkpk}) we obtain a second-order cone
representation for~$K_a$.
\end{lab}

\begin{lab}\label{factoriz}%
From an identity \ref{posdecompos} with $p_i(0),\,q_i(0)>0$, one
immediately reads off an $(\sfS^2_\plus)^m$-factori\-zation of $K_a$,
see Remark \ref{gptrem}.
Indeed, if $|u|,\,|v|\le a$, and if $\tau_v\in P_{K_a}$ is the
positive tangent at $(v,f(v))$ as before, the matrices
$$A_i(u)\>:=\>p_i(u)\begin{pmatrix}1&u\\u&u^2\end{pmatrix},\quad
B_i(v)\>:=\>q_i(v)\begin{pmatrix}v^2&-v\\-v&1\end{pmatrix}$$
($1\le i\le m$) are psd of rank $\le1$ and satisfy
$$\tau_v(u,f(u))\>=\>\sum_{i=1}^m\bigl\langle A_i(u),\,B_i(v)
\bigr\rangle,$$
by \eqref{posdecompos}.

The question for the existence of such a factorization was raised by
Gennadiy Averkov (Oberwolfach, June 2019), in the case of the
polynomial $f(x)=x^2-x^6$ (which is strictly convex for
$|x|<\root^4\of{15}\approx0.5081$). One possible decomposition
\eqref{posdecompos} in this case is
$$1-u^4-2u^3v-3u^2v^2-4uv^3-5v^4\>=\>2p_3q_1+3p_2q_2+4p_1q_3+p_4+
q_4$$
with $p_i(u)=a^i+u^i$, $q_j(v)=a^j-v^j$ ($i,j=1,2,3$),
$p_4(u)=\frac12-4a^4-4a^3u-3a^2u^2-2au^3-u^4$ and
$q_4(v)=\frac12-5a^4+2a^3v+3a^2v^2+4av^3-5v^4$. In this specific
decomposition we have $p_i(u)\ge0$, $q_j(v)\ge0$ for $|u|,\,|v|\le a$
and $i,j=1,\dots,4$, as long as $a\le1/\root^4\of{28}\approx0.4347$.
\end{lab}

\begin{lab}
The discussion in \ref{avex} and \ref{factoriz} was particularly easy
for several reasons. One is that the Zariski closure of the boundary
of $K_a$ is a nonsingular rational curve. Moreover, all tangents are
ordinary, i.e.\ they have contact order two. Existence
of higher order tangents forces a refined decomposition of the
tangent tensor, see Corollary \ref{txygenericase} versus Theorem
\ref{txyidentaet}. Nonrational boundary curves, or singularities of
the boundary, will further complicate the picture, as detailed in
Section~4.

To illustrate the influence of higher order tangents, consider the
problem of representing the epigraph of a polynomial $f(x)$ that is
only nonstrictly (instead of strictly) convex. For example, if
$f=c_mx^m+c_{m+1}x^{m+1}+\cdots$ with $c_m>0$ and $m\ge2$, the
tangent tensor \eqref{pullbacktang} becomes
$$T\>=\>(u-v)^2\cdot\Bigl(c_m\sum_{i=0}^{m-2}(i+1)u^{m-2-i}v^i+
(\text{higher order terms})\Bigr),$$
and one needs to find polynomials $p_{ij},\,q_{ij}$ with
$$\sum_{k\ge2}\frac1{k!}f^{(k)}(v)\cdot(u-v)^{k-2}\>=\>
\sum_{i=0}^{m-2}u^{m-2-i}v^i\sum_jp_{ij}(u)q_{ij}(v)$$
such that $p_{ij}(0)>0$ and $q_{ij}(0)>0$ (analogue of
\eqref{posdecompos}, see the proof of Theorem \ref{txyidentaet}).
\end{lab}

\begin{lab}
Suppose we want to find an explicit second-order cone representation
for an arbitrary given closed convex \sa\ set $K\subset\R^2$. By
decomposing into finitely many pieces, we can assume that $K$ is the
closed convex hull of a \sa\ set $S\subset C(\R)$ as in
\ref{1streduct}, where $C\subset\A^2$ is an irreducible curve.
Let $\pi\colon X\to C$ be the normalization, and let $\xi\in X(\R)$
with $\eta=\pi(\xi)\in S$. Write $A=\R[X]$. Since the proof of
Theorem \ref{txyidentaet} was constructive, we can find a
decomposition \eqref{txydecomp} of the tensor
$T(x,y)\in A_\xi\otimes A_\xi$ as in
Theorem \ref{txyidentaet}, with explicit elements
$\alpha_i,\,\beta_i\in A_\xi\otimes A_\xi$ and $u_1,\,u_2\in A_\xi$.
Each of the $\alpha_i,\,\beta_i$ can be written (explicitly) as a sum
of tensors $a_\nu\otimes b_\nu$ with $a_\nu,\,b_\nu\in A_\xi$ and
$a_\nu(\xi),\,b_\nu(\xi)>0$, see Proposition \ref{keyobs} and its
proof. Let $S'\subset S$ be a closed neighborhood of $\eta$ inside
$S$ on which all the $a_\nu$ and the $b_\nu$ are strictly positive.
Extend the ring $A$ by adjoining square roots of all the (finitely
many) elements $a_\nu$, let $\psi\colon V\to X$ be the morphism so
defined, and let $\phi=\pi\comp\psi\colon V\to C$. Similar to the
arguments in \ref{avex}, we see that we obtain an explicit
second-order cone representation for the closed convex hull of~$S'$.

Working locally around every point $\eta$ of $S$ in this way, the set
$S$ is covered by finitely many local patches. Patching together
these local representations \`a~la Proposition \ref{sxconvhull}, one
can then arrive at a global representation for~$K$.
\end{lab}



\begin{thebibliography}{xxxx}

\bibitem{al}
M.~Anjos, J.\,B.~Lasserre (eds):
\emph{Handbook on Semidefinite, Conic and Polynomial Optimization}.
Springer, New York, 2012.

\bibitem{av}
G.~Averkov:
Optimal size of linear matrix inequalities in semidefinite approaches
to polynomial optimization.
SIAM J.~Appl\ Algebra Geometry \textbf{3}, 128--151 (2019).

\bibitem{btn}
A.~Ben-Tal, A.~Nemirovski:
On polyhedral approximations of the second-order cone.
Math.\ Oper.\ Res.\ \textbf{26}, 193--205 (2001).

\bibitem{bcr}
J.~Bochnak, M.~Coste, M-F.~Roy:
\emph{Real Algebraic Geometry}.
Erg.\ Math.\ Grenzgeb.\ (3) \textbf{36}, Springer, Berlin, 1998.

\bibitem{bo}
N.~Bourbaki:
\emph{Alg\`ebre}, chapitres~4~\`a~7.
Masson, Paris, 1981.

\bibitem{fw1}
H.~Fawzi:
On representing the positive semidefinite cone using the
second-order cone.
Math.\ Program., Ser.~A, \textbf{175}, 109--118 (2019).

\bibitem{fw2}
H.~Fawzi:
The set of separable states has no finite semidefinite
representation except in dimension $3\times2$. Preprint,
arxiv:1905.02575.

\bibitem{fu}
W.~Fulton:
\emph{Algebraic Curves}.
Benjamin, New York, 1969.

\bibitem{gpt}
J.~Gouveia, P.\,A.~Parrilo, R.\,R.~Thomas:
Lifts of convex sets and cone factorizations.
Math.\ Oper.\ Res.\ \textbf{38}, 248--264 (2013).

\bibitem{ega4}
A.~Grothendieck:
\'El\'ements de g\'eom\'etrie alg\'ebrique~IV, Quatri\`eme partie.
Publ.\ math.\ I.H.\'E.S.\ \textbf{32}, 5--361 (1967).

\bibitem{grs}
R.\,L.~Graham, B.\,L.~Rothschild, J.\,H.~Spencer:
\emph{Ramsey Theory}, Second Edition.
Wiley, New York, 1990.

\bibitem{hn}
J.\,W.~Helton, J.~Nie:
Sufficient and necessary conditions for semidefinite
representability of convex hulls and sets.
SIAM J.~Optim.\ \textbf{20}, 759--791 (2009).

\bibitem{ma}
M.~Marshall:
\emph{Positive Polynomials and Sums of Squares}.
Math.\ Surveys Monographs \textbf{146}, AMS, Providence RI, 2008.

\bibitem{mi}
H.\,D.~Mittelmann:
An independent benchmarking of SDP and SOCP solvers.
Math.\ Program., Ser.~B, \textbf{95}, 407--430 (2003).

\bibitem{nt}
T. Netzer:
On semidefinite representations of non-closed sets.
Linear Algebra Appl.\ \textbf{432}, 3072--3078 (2010).

\bibitem{ns}
T.~Netzer, R.~Sinn:
A note on the convex hull of finitely many projections of
spectrahedra.
Preprint, arxiv:0908.3386.

\bibitem{ra}
M.\,V. Ramana:
An exact duality theory for semidefinite programming and its
complexity implications.
Math.\ Prog.\ \textbf{77}, 129--162 (1997).

\bibitem{ro}
R.\,T.~Rockafellar:
\emph{Convex Analysis}.
Princeton University Press, Princeton NJ, 1970.

\bibitem{sa}
J.~Saunderson:
Limitations on the expressive power of convex cones without long
chains of faces.
Preprint, arxiv 1902:06401.

\bibitem{sch:curv}
C.~Scheiderer:
Semidefinite representation for convex hulls of real algebraic
curves.
SIAM J.~Appl\ Algebra Geom.\ \textbf{2}, 1--25 (2018).

\bibitem{sch:hn}
C.~Scheiderer:
Spectrahedral shadows.
SIAM J.~Appl\ Algebra Geom.\ \textbf{2}, 26--44 (2018).

\bibitem{zi}
G.~Ziegler:
\emph{Lectures on Polytopes}.
Grad.~Texts Math.\ \textbf{152}, Springer, Berlin, 1995.

\end{thebibliography}
\end{document}